\pgfplotsset{compat=1.15}
\tikzset{
    >=stealth,
    every picture/.style={thick},
    graphs/every graph/.style={empty nodes},
}
\tikzstyle{vertex}=[
\tikzstyle{printersafe}=[decoration={snake,amplitude=0pt}]
\newcommand{\Aut}{\operatorname{Aut}}
\newcommand{\codim}{\operatorname{codim}}
\newcommand{\pp}{\mathbb{P}}
\newcommand{\qq}{\mathbb{Q}}
\newcommand{\zz}{\mathbb{Z}}
\newcommand{\nn}{\mathbb{N}}
\newcommand{\rr}{\mathbb{R}}
\newcommand{\cc}{\mathbb{C}}
\definecolor{uuuuuu}{rgb}{0.26666666666666666,0.26666666666666666,0.26666666666666666}
\newtheorem{introcon}{Conjecture}
  \newtheorem{theorem}{Theorem}[section]
  \newtheorem{lemma}[theorem]{Lemma}
  \newtheorem{proposition}[theorem]{Proposition}
  \newtheorem{corollary}[theorem]{Corollary}
\theoremstyle{definition}
  \newtheorem{definition}[theorem]{Definition}
  \newtheorem{example}[theorem]{Example}
  \newtheorem{question}[theorem]{Question}
  \newtheorem{construction}[theorem]{Construction}
\newtheorem{alphtheorem}{Theorem}
\newtheorem{alphcor}[alphtheorem]{Corollary}
\newtheorem{remark}[theorem]{Remark}
\theoremstyle{remark}
\numberwithin{equation}{section}
\newcommand{\cecile}[1]{\textcolor{red}{#1}}
\keywords{Fundamental groups, Calabi--Yau surfaces, toric surfaces, toric fibrations}
\subjclass[2020]{Primary 14E30, 14F35; Secondary 90C57, 14M25, 20F34}
\begin{document}

\title[Fundamental groups of log Calabi--Yau surfaces]{Fundamental groups of log Calabi--Yau surfaces}

\author[C. Gachet]{C\'ecile Gachet}
\address{Ruhr Universit\"at Bochum, Universit\"atsstr. 150, 
44801 Bochum, Germany
}
\email{cecile.gachet@rub.de}

\author[Z.~Liu]{Zhining Liu}
\address{Laboratoire de Mathématiques et Applications,
Université de Poitiers,
86073 Poitiers CEDEX 9,
France}
\email{zhining.liu@univ-poitiers.fribs.re.kr}

\author[J.~Moraga]{Joaqu\'in Moraga}
\address{UCLA Mathematics Department, Box 951555, Los Angeles, CA 90095-1555, USA
}
\email{jmoraga@math.ucla.edu}

\begin{abstract}
In this article, 
we study the orbifold fundamental group
$\pi_1^{\rm orb}(X,\Delta)$
of a Calabi--Yau pair $(X,\Delta)$ with log canonical singularities.
We conjecture that the orbifold fundamental group
$\pi_1^{\rm orb}(X,\Delta)$ of a $n$-dimensional log Calabi--Yau pair
admits a normal solvable subgroup
of rank at most $2n$
and index at most $c(n)$.
We prove this conjecture in the case that $n=2$. 
More precisely, for a log Calabi--Yau surface pair $(X,\Delta)$ we show that $\pi_1^{\rm orb}(X,\Delta)$ is the extension of a nilpotent group of length at most $2$ and rank at most $4$ by a finite group of order at most $7200$. We also show that the bounds on the nilpotency length, rank, and order of the finite group quotient in this result are sharp.
Finally, we provide some necessary criteria for a log Calabi--Yau surface $(X,\Delta)$ to have an infinite, or a non virtually abelian orbifold fundamental group.
\end{abstract}

\maketitle

\setcounter{tocdepth}{1}
\tableofcontents

\section{Introduction}

The topology of complex algebraic varieties is a long-standing topic with numerous ramifications, from the classical Riemann uniformization theorem (see \cite{dSG16} for a historical survey) to a plethora of more recent results and open questions. There are some recent notable results about the most central notions of fundamental groups and universal covers of varieties. To just cite a few striking results in varied situations, in chronological order of publication, see \cite{Ara91,Cam93,Kol93,Tol93,Ara95,Cam95,CT95,Kol95,CT97,ABCKT96,AN99,CKO03,CF03,Cam04,DPS09,Cam11bis,BCGP12,CHK13,CC14,ADH16,CC16,GKP16,Ara17,Cat17,Bra20,AFPRW22,CGGN22,CGG23}.
One of the simplest ideas governing that realm of questions is that the fundamental group of a complex algebraic variety should be easiest to control when the variety's curvature is the most positive.
For instance, any smooth Fano variety is simply connected~\cite{Kob61,Cam91,KMM92}, the fundamental group of a smooth Calabi--Yau variety\footnote{Here, we refer to a variety as Calabi--Yau if it has a numerically trivial canonical divisor.} is virtually abelian~\cite{Gro78,Bea83}, and the fundamental groups of smooth canonically polarized varieties are still not quite fully understood (see, e.g., \cite{Ara95} for partial results).

Allowing singularities in the picture unleashes many more unruly topological phenomena (see, for example, \cite[Theorem 12.1]{Sim11} and \cite[Theorems 1 and 2]{KK14}). In birational geometry, certain types of singularities naturally arise running the minimal model program~\cite[Definition 2.34]{KM98}. Their local topology has been recently studied: For example, Braun shows in \cite{Bra20} that Kawamata log terminal (klt) singularities have finite local fundamental groups, whereas log canonical (lc) singularities have virtually solvable local fundamental groups in dimension $2$, can have larger local fundamental groups in dimension $3$, and even have any free group as local fundamental group in dimension 4 by \cite{FM23}.

There are global counterparts to these local results: In \cite{Bra20}, it also is proven that the fundamental group of the smooth locus of a klt Fano variety is finite, whereas \cite{CC14} shows that the fundamental group of the smooth locus of a klt Calabi--Yau surface is virtually abelian. In higher dimension, the fundamental group of the smooth locus of a klt Calabi--Yau variety is conjectured to be virtually abelian as well. This conjecture is yet to be proven (for some partial results, see \cite{GGK19,Dru18,HP19,Cam21}, \cite{CC14} in dimension $2$, \cite{BF24} in dimension $3$).

The global topology of Fano and Calabi--Yau pairs with singularities wilder than klt (such as purely log terminal (plt), divisorial log terminal (dlt), or log canonical (lc) singularities) has been less studied in the past. Importantly, the orbifold fundamental group of such a pair can well be infinite, as it often mimics the fundamental group of a quasiprojective variety. For example, the pair $(\mathbb{P}^2,\ell_1+\ell_2)$, where $\ell_1,\ell_2$ are two distinct lines, is a dlt Fano pair, and has an infinite cyclic orbifold fundamental group. Some results on certain dlt Fano pairs are given by \cite[Proposition 7.10]{KM99}.

\subsection{Main results}
In this paper, we provide structural results on the orbifold fundamental groups of log canonical Calabi--Yau pairs of dimension 1 and 2. Along the way, we also prove results on the orbifold fundamental groups of log canonical Fano pairs of dimension 1 and 2.

We then establish our main result.

\begin{alphtheorem}\label{introthm:fun-group-lcy}
Let $(X,\Delta)$ be a log canonical Calabi--Yau pair with $\dim X =2$. Then the orbifold fundamental group $\pi_1^{\rm orb}(X,\Delta)$ admits a normal subgroup of index at most $7200$, that is abelian of rank at most $4$, or nilpotent of length at most $2$ and a quotient, for some $k\ge 1$, of the Heisenberg group
$$H_k:=\langle a,b,c\mid [a,b]=[a,c]=a^k\, [b,c] = 1\rangle.$$
\end{alphtheorem}

It is worth noting that the orbifold fundamental group of any log canonical Fano pair arises as the orbifold fundamental group of a log canonical Calabi--Yau pair too. The converse is not true, and we obtain stronger results for the orbifold fundamental groups of log canonical Fano pairs, see Proposition \ref{prop:lc-Fano-case}.

We later provide examples showing that Theorem \ref{introthm:fun-group-lcy} is optimal: Example \ref{ex:p1-over-elliptic} provides, for each $k\ge 1$, a log canonical Calabi--Yau pair whose orbifold fundamental group is isomorphic to the Heisenberg group $H_k$ (see also \cite[5.45]{HZ87}, \cite[Example 24]{AC23} for similar constructions; we also believe \cite[Question 26]{AC23} to be related and of interest; this also disproves \cite[Conjecture 4.46]{Mor22}); Example \ref{ex:quotient-p1xp1} provides a klt Fano pair whose orbifold fundamental group is isomorphic to the finite group $(\mathfrak{A}_5\times\mathfrak{A}_5)\rtimes\zz/2\zz$ of order $7200$, which has no non-trivial normal abelian subgroup; finally, any abelian fourfold has fundamental group isomorphic to $\zz^4$.

For klt Calabi-Yau surface pairs, the virtual abelianity was already known by Campana and Claudon~\cite{CC14}. To make their result effective, we provide a sharp bound on the index of the normal abelian subgroup. The proof goes by combining the known virtual abelianity with classical facts on group actions on abelian and K3 surfaces. We show the following result.

\begin{alphtheorem}\label{introthm:fun-group-klt}
Let $(X,\Delta)$ be a klt Calabi--Yau pair with $\dim X =2$. Assume that $\Delta =\Delta^{\rm st}$.
Then, the group $\pi_1^{\rm orb}(X,\Delta)$ 
is finite of order at most $3840$, or admits a normal subgroup isomorphic to $\zz^4$ of index at most $96$.
\end{alphtheorem} 

This theorem fails if we drop the klt assumption (see Example~\ref{ex:p1-over-elliptic}) or the standard coefficient assumption (see Example~\ref{ex:p1-over-elliptic-standard}). Again, the bound on the rank is clearly sharp. In the finite case, the bound on the order relates to work by Kondo (see Example \ref{ex:kondo}).

We finally prove an array of necessary criteria for a log canonical Calabi--Yau surface pair to have particularly large orbifold fundamental group.

\begin{alphtheorem}\label{introthm:fun-group-not-virt-ab}
Let $(X,\Delta)$ be a log canonical Calabi-Yau pair with $\dim X = 2$. If the group $\pi_1^{\rm orb}(X,\Delta)$ is not virtually abelian, then there exists a compatible finite Galois cover $p: (\tilde{X},\tilde{\Delta})\to (X,\Delta)$ of degree at most 96 such that the pair $(\tilde{X},\tilde{\Delta})$ is birationally equivalent to a pair $(\mathbb{P}(\mathcal{O}_E\oplus L),s_0+s_{\infty})$, where $E$ is an elliptic curve, $L$ is an ample line bundle on $E$, and $s_0,s_{\infty}$ are the two sections corresponding to the two factors.
\end{alphtheorem}

\begin{alphtheorem}\label{introthm:fun-group-Z4}
Let $(X,\Delta)$ be a log canonical Calabi-Yau pair with $\dim X = 2$. The group $\pi_1^{\rm orb}(X,\Delta)$ admits a subgroup isomorphic to $\mathbb{Z}^4$ {if and only if} there exists an abelian variety $A$ and a finite group $G<\Aut(A)$ such that the pairs $(X,\Delta)$ and $(A/G,\,{\rm Branch}(p))$ are isomorphic, where $p:A\to A/G$ denotes the quotient map.
\end{alphtheorem}

\begin{alphtheorem}\label{introthm:fun-group-Z3}
Let $(X,\Delta)$ be a log canonical Calabi-Yau pair with $\dim X = 2$. If the group $\pi_1^{\rm orb}(X,\Delta)$ admits a subgroup isomorphic to $\mathbb{Z}^3$, but no subgroup isomorphic to $\mathbb{Z}^4$, then there is a compatible finite Galois cover $p:(\tilde{X},\tilde{\Delta})\to (X,\Delta)$, and a birational equivalence of $(\tilde{X},\tilde{\Delta})$ with a pair $(\mathbb{P}(\mathcal{O}_E\oplus M),s_0+s_{\infty})$, where $E$ is an elliptic curve, $M$ is a numerically trivial line bundle on $E$, and $s_0,s_{\infty}$ are two sections of this $\pp^1$-bundle.
\end{alphtheorem}

\begin{alphtheorem}\label{introthm:fun-group-inf}
Let $(X,\Delta)$ be a log canonical Calabi--Yau pair with $\dim X = 2$. 
If the group $\pi_1^{\rm orb}(X,\Delta)$ contains a subgroup isomorphic to $\zz^r$ for $r=1$ or $2$, then at least one of the following conditions holds:
\begin{itemize}
    \item[(i)] there is a compatible finite Galois $p:(\tilde{X},\tilde{\Delta})\to (X,\Delta)$, and a birational equivalence of $(\tilde{X},\tilde{\Delta})$ with a pair $(Y,\Delta_Y)$ such that the surface $Y$ admits a fibration onto an elliptic curve, or
    \item[(ii)] there is a compatible finite Galois cover $p:(\tilde{X},\tilde{\Delta})\to (X,\Delta)$, and a birational equivalence of $(\tilde{X},\tilde{\Delta})$ with a pair $(Y,\Delta_Y)$ such that $Y$ admits a $(\mathbb{C}^*)^r$-action preserving $\lfloor\Delta_Y\rfloor$.
\end{itemize}
\end{alphtheorem}

\subsection{Jordan property and residual finiteness for fundamental groups}
Along the way to establishing our main results, we prove several results on orbifold fundamental groups of dlt Fano pairs. The following theorem is key, and holds in any dimension $n$. For $n\ge 1$, we denote by $J(n)$ the {\it universal Jordan constant} for birational self-maps of rationally connected klt projective varieties, which exists by \cite[Theorem 1.8]{PS14} and by the proof of the BAB conjecture \cite[Theorem 1.1]{Bir21}. Note that the value $J(2)=7200$ was computed by Yasinsky in \cite{Yas19}.

\begin{alphtheorem}\label{introthm:Jordan-dlt-Fano}
Let $(X,\Delta)$ be an $n$-dimensional dlt Fano pair.
If the group $\pi_1^{\rm orb}(X,\Delta)$ is residually finite, then it admits an normal abelian subgroup of index at most $J(n)$.
\end{alphtheorem}

The proof of Theorem~\ref{introthm:Jordan-dlt-Fano} is given in Section~\ref{sec:proofs}. It is not so technical, and can be read independently of the previous sections and results of this article.

Theorem~\ref{introthm:Jordan-dlt-Fano} is very important to us, as it reduces the task 
of proving that the orbifold fundamental group of a dlt Fano surface pair has a normal abelian subgroup of index at most $7200$ to showing that it is a residually finite group. In that way, it translates an ineffective result (residual finiteness) into an effective one (virtual abelianity, with an explicit bound on the index of a birational geometric nature).

\medskip

Residual finiteness is an important property for fundamental groups. In general, a group is residually finite if and only if it embeds into its {\it profinite completion}. Since that the profinite completion of the fundamental group is the algebraic fundamental group, it makes sense that algebro-geometric methods of proofs deal best with fundamental groups which are proven, {\it a priori} or {\it a posteriori}, to be residually finite. By Theorem \ref{introthm:fun-group-lcy}, this is the case of all fundamental groups considered in this paper. We state this in the following corollary.

\begin{alphcor}\label{introcor1}
Let $(X,\Delta)$ be a log canonical Calabi--Yau pair with $\dim X \le 2$.
The group $\pi_1^{\rm orb}(X,\Delta)$ is residually finite.
\end{alphcor}

Some examples of non--residually finite fundamental groups of smooth projective varieties are given in \cite{Tol93}.
Note that Corollary \ref{introcor1} furthers the folklore expectation that positivity of the canonical class relates to a ``large'' fundamental group $\pi_1(X)$, whereas positivity of the anticanonical class relates to a ``small'' fundamental group $\pi_1(X)$.

\subsection{Some conjectures}

Finally, we introduce two conjectures that encompass our expectations for higher-dimensional pairs. These are essentially effective versions of the ``Nilpotency conjecture'' introduced and studied in \cite[Conjecture 1.14]{CDY22} for $h$-special quasi-projective varieties, and which generalizes the well-known ``Abelianity conjecture'' due to Campana \cite[Conjecture 13.10.(2)]{Cam11}, \cite[11.2]{Cam11bis}. We also refer to \cite[Conjecture 23, Example 24]{AC23} to motivate this conjecture, its name, and its expected strengthening in the setting of Fano pairs.

The first conjecture predicts that the orbifold fundamental groups of $n$-dimensional lc Fano pairs are virtually abelian, with expected rank and index bounded universally in terms of the dimension $n$.

\begin{introcon}
\label{conj:lc-Fano}
For every $n\ge 1$, there is a constant $c(n)$ such that: For any $n$-dimensional log canonical Fano pair $(X,\Delta)$, the orbifold fundamental group $\pi_1^{\rm orb}(X,\Delta)$ contains a normal abelian subgroup of rank at most $n$ and index at most $c(n)$.
\end{introcon}

The second conjecture predicts that the orbifold fundamental groups of $n$-dimensional log canonical Calabi--Yau pairs are virtually nilpotent, with expected rank and index bounded in terms of the dimension $n$.

\begin{introcon}\label{conj:lc-CY}
For every $n\ge 1$, there is a constant $k(n)$ such that: For any $n$-dimensional log canonical Calabi--Yau pair $(X,\Delta)$, the orbifold fundamental group $\pi_1^{\rm orb}(X,\Delta)$ contains a normal subgroup that is nilpotent of rank at most $2n$ and has index at most $k(n)$.
\end{introcon}


\subsection{Sketch of the proof of Theorem \ref{introthm:fun-group-lcy}} We represent the diagram of dependence between the different results of this article in Figure \ref{fig:dependence}. There, we also credit three especially influential papers, which we thoroughly use in our proofs. As Theorem \ref{introthm:fun-group-lcy} is the most central of our results, we will explain our strategy to prove it. 

\begin{figure}
\[\begin{tikzcd}[cells={nodes={draw}}, column sep = 0.3cm, row sep = 1cm]
\textup{Campana--Claudon \cite{CC14}} \arrow[d] & & & 
    & \textup{Prokhorov--Shramov \cite{PS14}} \arrow[d] \\
\textup{Theorem \ref{introthm:fun-group-klt}} \arrow[drr] & 
    & \textup{Braun \cite{Bra20}} \arrow[d] & 
    & \textup{Theorem \ref{introthm:Jordan-dlt-Fano}} \arrow[dll]\\
& & \textup{Theorem \ref{introthm:fun-group-lcy}}
    \arrow[dr] \arrow[drr] \arrow[dl] \arrow[dll]& &\\
\textup{Theorem \ref{introthm:fun-group-not-virt-ab}} 
    & \textup{Theorem \ref{introthm:fun-group-Z4}} 
    & 
    & \textup{Theorem \ref{introthm:fun-group-Z3}}
    & \textup{Theorem \ref{introthm:fun-group-inf}}
\end{tikzcd}\]
    \caption{Diagram of dependences between our main results}
    \label{fig:dependence}
\end{figure}

Our starting point is twofold. First, we use birational geometry to reduce to case analysis. The idea is as follows. The orbifold fundamental group of a log canonical pair is unchanged by mild log resolutions (see Lemma \ref{lem:dlt-mod-vs-fun}), and it only increases through regular birational contractions (see Lemma \ref{lem:fun-under-surj}). Hence, to prove Theorem \ref{introthm:fun-group-lcy} for a particular pair $(X,\Delta)$, it suffices to prove it for the end-product $(X_0,\Delta_0)$ of a minimal model program run from a dlt modification $(X',\Delta')$ of $(X,\Delta)$. Let us make the following key remark: Of course, there is no point in running a $(K_{X'}+\Delta')$--MMP if the pair $(X',\Delta')$ is Calabi--Yau. What makes more sense is to pick an auxiliary divisor $\Delta_{{\rm aux}}$ on $X'$ so that the pair $(X',\Delta_{{\rm aux}})$ is log canonical, and run a $(K_{X'}+\Delta_{{\rm aux}})$-MMP on the surface $X'$. Denoting by $X_0$ the last surface obtained by that MMP, and by $\Delta_0$ the pushforward to $\Delta'$, we have our pair $(X_0,\Delta_0)$. We will apply this procedure several times in the course of the proof and choose various auxiliary divisors when running MMPs.

Second, we establish in Lemma \ref{subsec:galois} a Galois correspondence between normal subgroups of finite index in orbifold fundamental groups, and so-called {\it compatible} finite Galois covers of pairs. This allows to understand properties of the orbifold fundamental group of a given pair in terms of the geometry of some of its finite Galois covers.

\medskip

From now on, we fix a log canonical Calabi--Yau surface pair $(X,\Delta)$, and we show that its orbifold fundamental group is described by Theorem \ref{introthm:fun-group-lcy}.

We take a dlt modification $(X',\Delta')$ of $(X,\Delta)$, then run a $K_{X'}$-MMP from it. It terminates with a log canonical Calabi--Yau surface pair $(X_0,\Delta_0)$, and the surface $X_0$ has klt singularities. We want to show that $\pi_1^{\rm orb}(X_0,\Delta_0)$ is described by Theorem \ref{introthm:fun-group-lcy}. We distinguish three cases depending on the reason of termination of this MMP.

\medskip

\noindent\underline{\textit{Case 1:}} The MMP terminates because $K_{X_0}$ is nef.

Since $(X_0,\Delta_0)$ is a Calabi--Yau pair, the canonical $K_{X_0}$ is then anti-effective and nef, hence numerically trivial. Thus, $\Delta_0=0$ too. This case is covered by Theorem \ref{introthm:fun-group-klt}, which is proved in (the independent) Section~\ref{sec:klt-cy-stand} using Campana--Claudon \cite{CC14}, the Beauville--Bogomolov decomposition for smooth surfaces, and descriptions of automorphism groups of abelian and K3 surfaces due to Fujiki~\cite{Fujiki88} and Mukai~\cite{Mukai88}.

\medskip

\noindent\underline{\textit{Case 2:}}
The MMP terminates because $X_0$ is a Mori fiber space on a curve.

This case is completely studied in Section~\ref{sec:mfs}.
Using the canonical bundle formula and an variant on Nori's trick (see Lemma~\ref{lem:nori}), we have an exact sequence:
$$\pi_1^{\rm orb}(F,\Delta|_F)\to \pi_1^{\rm orb}(X_0,\Delta_0)\to \pi_1^{\rm orb}(C,\Delta_C)\to 1,$$ 
where $F$ is the general fiber of the Mori fiber space $f:X_0\to C$, and the pair $(C,\Delta_C)$ is induced by $f$ on the base of the Mori fiber space (see Definition~\ref{def:mult-divisor}).

The preliminary Section \ref{section:trichotomy} provides a good understanding of what the groups $\pi_1^{\rm orb}(F,\Delta|_F)$ and $\pi_1^{\rm orb}(C,\Delta_C)$ can be: Each of them is finite of order at most $60$, infinite cyclic, infinite dihedral, or containing $\mathbb{Z}^2$ as a normal subgroup of index at most $6$ (this third case is what we call the {\it elliptic type} of curve pairs). 
To paste both pieces of information together and describe the extension $\pi_1^{\rm orb}(X_0,\Delta_0)$ takes more work.

\medskip

\hfill \begin{minipage}{0.95\textwidth}
\noindent\underline{\textit{Case 2.1:}} The easiest case is when neither $(F,\Delta|_F)$, nor $(C,\Delta_C)$ is of elliptic type. That assumption resembles the ``slope rationally connected'' assumption outlined in \cite[Conjecture 23]{AC23}, and is handled by a variation on Theorem \ref{introthm:Jordan-dlt-Fano}.

\medskip

\noindent\underline{\textit{Case 2.2:}} The next case is when the base $(C,\Delta_C)$ is of elliptic type. By a base change, we reduce to the case when $X_0\simeq \mathbb{P}_E(V)$ is the projectivization of a rank two vector bundle $V$ on an elliptic curve $E$, and the divisor $\Delta_0$ is purely horizontal. This case can be settled by writing the divisor $\Delta_0$ explicitly in terms of sections and multi-sections of the $\mathbb{P}^1$-bundle.

\medskip

\noindent\underline{\textit{Case 2.3:}} The last case is when the base $(C,\Delta_C)$ is not of elliptic type, but the fiber $(F,\Delta|_F)$ is. This case comprises several examples with distinct geometric properties, notably a product case $$(X_0,\Delta_0)\simeq \left(\mathbb{P}^1\times \mathbb{P}^1,\, f_0+f_{\infty}+\frac{1}{2}(s_0+s_1+s_2+s_{\infty})\right),$$ where $s_p := \{p\}\times \mathbb{P}^1, f_p := \mathbb{P}^1\times \{p\}$, with the second projection as the preferred Mori fiber space, and a non-klt case 
$$(X_0,\Delta_0)\simeq \left(\mathbb{F}_2,\, \frac{1}{2}(s^+_0+s^+_1+s^+_2+s^+_3)\right),$$ 
where $\mathbb{F}_2$ is the Hirzebruch surface of degree $2$ with its unique fibration to $\mathbb{P}^1$, the curves $s^+_0,s^+_1$ are two distinct sections of self-intersection $2$, and the curves $s^+_2,s^+_3$ are any other two members of the pencil generated by $s^+_0$ and $s^+_1$. This case is studied using the Galois correspondence for orbifold fundamental groups, MRC fibrations, a variation on Theorem \ref{introthm:Jordan-dlt-Fano} as in {\it Case 2.1}, and Theorem \ref{introthm:fun-group-klt} as in {\it Case 1}. In one particularly singular case, we have to perform an elementary transformation (also known as a Sarkisov link) on the Mori fiber space to be able to apply these methods.
\end{minipage}

\bigskip

\noindent\underline{\textit{Case 3:}} The MMP terminates because $X_0$ is a klt Fano surface of Picard rank $1$.

This case further splits into subcases. Denoting by $\Delta_0^{\rm st}\le \Delta_0$ the standard approximation of $\Delta_0$, the main two cases are when $\Delta_0^{\rm st} = \Delta_0$, i.e., $(X_0,\Delta_0)$ is an lc Calabi--Yau pair with standard coefficients, and when $\Delta_0^{\rm st} < \Delta_0$, i.e., the pair $(X_0,\Delta_0)$ has the same orbifold fundamental group as the lc Fano pair $(X_0,\Delta_0^{\rm st})$. 


\medskip 

\hfill \begin{minipage}{0.95\textwidth}
\noindent\underline{\textit{Case 3.1:}} The pair $(X_0,\Delta_0)$ is a Calabi--Yau pair with standard coefficients.

\medskip

If the pair is klt, then Theorem \ref{introthm:fun-group-klt} describes its orbifold fundamental group. If the pair is lc but not klt, there is an index one cover of $(X_0,\Delta_0)$ of degree at most $6$. We take a dlt modification $(Y,\Delta_Y)$ of that cover. Note that the surface $Y$ is Gorenstein and has canonical singularities, and the divisor $\Delta_Y$ has integral coefficients.

\medskip

If every compatible finite Galois cover of $(Y,\Delta_Y)$ is rationally connected, then a variant of Theorem \ref{introthm:Jordan-dlt-Fano} applies, and we are left proving that the orbifold fundamental group $\pi_1^{\rm orb}(Y,\Delta_Y)$ is residually finite. We prove that by comparing that group with the orbifold fundamental group of a pair $(Y',\Delta_{Y'})$, that is obtained from an appropriate compatible finite Galois cover of $(Y,\Delta_Y)$ by a running an MMP, and that has a Mori fiber space structure.

\medskip

If there is a compatible finite Galois cover of $(Y,\Delta_Y)$ that is not rationally connected, then there also is a non rationally connected compatible finite Galois cover $(X',\Delta')$ of the initial pair $(X_0,\Delta_0)$. Modifying it to be dlt and running an equivariant $K_{X'}$-MMP, we end up with a pair $(X_1,\Delta_1)$ where the surface $X_1$ is klt and not rationally connected, and has an equivariant Mori fiber space structure. Since klt Fano surfaces are rationally connected, the surface $X_1$ must be an equivariant Mori fiber space onto a curve. Quotienting by the group action, this shows that a dlt modification of $(X_0,\Delta_0)$ has an MMP that terminates with a Mori fiber space onto a curve too, and reduces to {\it Case 2}.
\end{minipage}

\medskip

\hfill \begin{minipage}{0.95\textwidth}
\noindent\underline{\textit{Case 3.2:}} The pair $(X_0,\Delta_0^{\rm st})$ is a Fano pair.

\medskip

If the pair is klt, then Braun \cite{Bra20} shows that its orbifold fundamental group is finite, and Theorem \ref{introthm:Jordan-dlt-Fano} gives the appropriate explicit bounds on the index and rank of a maximal abelian subgroup.

\medskip

If the pair is not klt, we need to distinguish between two subcases, based on the coregularity of the pair $(X_0,\Delta_0^{\rm st})$. Loosely speaking, the coregularity of a pair is a birational invariant that measures how singular it is. The coregularity of the pair $(X_0,\Delta_0^{\rm st})$ ranges in the value set $\{0,1,2\}$. Importantly, if the pair $(X_0,\Delta_0^{\rm st})$ has coregularity {\it two}, then it is a klt pair, a case which we have already treated.
\end{minipage}

\medskip

\hfill \begin{minipage}{0.9\textwidth}
\noindent\underline{\textit{Case 3.2.a:}} The pair $(X_0,\Delta_0^{\rm st})$ has coregularity zero.

\medskip

We can then increase the coefficients of components in $\Delta_0^{\rm st}$ to obtain an $2$-complement, that is an effective divisor $0\le \Delta_0^{\rm st}\le \Gamma$ on $X_0$ such that the pair $(X_0,\Gamma)$ is an lc Calabi--Yau pair, and moreover $2(K_{X_0}+\Gamma)\sim 0$, see \cite{FFMP22}. We can then focus on the orbifold fundamental group of the new pair $(X_0,\Gamma)$. This new pair is in fact an lc Calabi--Yau pair with standard coefficients, so it is covered by {\it Case 3.1}.

\medskip

\noindent\underline{\textit{Case 3.2.b:}} The pair $(X_0,\Delta_0^{\rm st})$ has coregularity one.

\medskip

We take a dlt modification $(X',\Delta')$ of $(X_0,\Delta_0^{\rm st})$, and run a $K_{X'}+\Delta'$-MMP. It terminates with a plt pair $(X_1,\Delta_1)$ that has standard coefficients.
If that MMP terminates because $K_{X_1}+\Delta_1$ is nef, then $(X_1,\Delta_1)$ is in fact a Calabi--Yau pair with standard coefficients, and {\it Case 3.1} concludes. If that MMP terminates because $(X_1,\Delta_1)$ is a Mori fiber space onto a curve, then {\it Case 2} concludes.

\medskip

Otherwise, that MMP terminates because $(X_1,\Delta_1)$ is a plt Fano pair and the surface $X_1$ has Picard rank one.
By a connectedness principle for the non-klt locus \cite{FS20} and since $\rho(X_1) = 1$, there is exactly one irreducible component for $\lfloor \Delta_1\rfloor$, which we denote by $S$. It is a smooth curve by \cite[Lemma 3.6]{Sho92}. We can write $\Delta_1 = S + L$, where $L$ is effective, with standard coefficients strictly smaller than $1$. The adjunction formula provides an lc pair $(S,\Delta_S)$ such that $$(K_{X_1}+\Delta_1)|_S\sim K_S+\Delta_S,$$
which is antiample. Using a lemma that relates $N$-complements of pairs before and after adjunction (Lemma \ref{lem:lifting-complements}), we deal with several cases, and we are left with three cases to study:
The cases when $(S,\Delta_S)\simeq \left(\mathbb{P}^1,\frac{1}{2}\{0\}+\frac{2}{3}\{1\}+\frac{n-1}{n}\{\infty\}\right)$ for $n=3,4,$ and $5$. Note that contributions to $\Delta_S$ come from singular points of $X_1$ lying on $S$, and from intersection points of $L$ and $S$.

\medskip

From here on, our strategy is to prove that the orbifold fundamental group $\pi_1^{\rm orb}(X_1,S+L)$ is residually finite, and to apply Theorem \ref{introthm:Jordan-dlt-Fano} to conclude. If $L=0$, this is proven by Keel and M\textsuperscript{c}Kernan~\cite[Theorem 1.4]{KM92}. Otherwise, we can list various cases depending on the way singular points of $X_1$ and intersection points with $L$ are arranged on the curve $S\simeq\mathbb{P}^1$, accounting for the pair obtained by adjunction $(S,\Delta_S)\simeq\left(\mathbb{P}^1,\frac{1}{2}\{0\}+\frac{2}{3}\{1\}+\frac{n-1}{n}\{\infty\}\right)$ for $n=3,4,$ or $5$. In each case, we use our lemma that relates $N$-complements of pairs before and after adjunction (Lemma \ref{lem:lifting-complements}) to provide a particular and explicit complement of the pair $(X_1,S+L)$. In most cases, that reduces us to the toric setting, using the well-known criterion by Brown, M\textsuperscript{c}Kernan, Svaldi, Zhong \cite{BMSZ18}, and some {\it ad hoc} toric arguments conclude. In a few remaining cases, we have to dwelve further into the explicit geometry of the pair and find a related Mori fiber space setting, using {\it Case 2} to ultimately conclude.
\end{minipage}

\subsection*{Acknowledgements}
The authors would like to thank 
Rodolfo Aguilar Aguilar for an interesting discussion on virtual solvability {\it versus} virtual nilpotency of fundamental groups in conjectures in the literature. We are also grateful for
Lukas Braun, 
Fr\'ed\'eric Campana,
Fabrizio Catanese,
Beno\^it Claudon, 
Gavril Farkas,
Fernando Figueroa,
Pedro Nu\~nez,
Mirko Mauri,
and De-Qi Zhang
for many useful comments and questions.

Gachet was supported by the ERC Advanced Grant SYZYGY during part of this project. This project has received funding from the European Research Council (ERC) under the European Union Horizon 2020 research and innovation program (grant agreement No. 834172).

\section{Preliminaries}

We work over the field $\cc$ of complex numbers. A {\it pair} $(X,\Delta)$ is the data of a normal projective variety $X$, and of a formal sum $\Delta$ of irreducible divisors with positive rational coefficients. For basic concepts regarding pairs and their singularities in birational geometry
we refer the reader to the book~\cite{Kol13}. 

We define the {\it rank} of a finitely generated group $G$ as the least number of generators of $G$. We define the {\it torsion-free rank} of an abelian group $A$ as its dimension as a $\zz$-module. For a group $G$, and for some elements $g_1,\dots,g_r$ of it, 
we denote by $\langle g_1,\dots,g_r\rangle_n$ the smallest normal subgroup of $G$ containing the elements $g_1,\dots,g_r$.

In Subsection~\ref{subsec:orb-fun}, we introduce and prove several results related to orbifold fundamental groups; In Subsection~\ref{subsec-rf}, we prove elementary facts about residually finite groups. In Subsection~\ref{subsec:ft-cy}, we review useful results regarding the geometry of Fano and Calabi--Yau pairs. 

\subsection{The orbifold fundamental group}\label{subsec:orb-fun}
We first recall the definition of the {\it orbifold fundamental group} of a pair. To the authors' knowledge, it appears first in \cite[14.10.1]{DM93} \cite[Definition 4.4]{Cat00}. It is a global notion. A local analogue is the {\it regional fundamental group} introduced by \cite{Bra20}, which we briefly define at the end of the subsection.

\begin{definition}\label{def:reg-fund}
For any real number $a\in [0,1]$, we define its standard round-down
\[
a^{\rm st}:=\max\left\{1-\frac{1}{m}\leq a\, \biggr\rvert\, m\in \mathbb{N}\cup \infty\right\}.
\]
For any effective divisor $\Delta$, with coefficients smaller or equal to one, on a quasiprojective variety $X$, we define its {\em standard approximation} as $$\Delta^{\rm st}:=\sum_{i=1}^k {a_i}^{\rm st}\, \Delta_i,$$
where the $\{\Delta_i\}_{1\le i\le k}$ are the prime components of the support of $\Delta$, with their respective coefficients $a_i$.
We say that a pair $(X,\Delta)$ {\em has standard coefficients} if it holds ${a_i}^{\rm st}=a_i$ for all $1\le i\le k$.
\end{definition}

\begin{definition}
    Let $(X,\Delta)$ be a pair, and let $P$ be a prime divisor on $X$. We define the {\it orbifold index} of $\Delta$ at $P$ as the largest value $m\in\mathbb{N}\cup\{\infty\}$ such that the coefficient of the prime component $P$ in $\Delta$ is at least $1 -\frac{1}{m}$.
\end{definition}

\begin{definition}
Let $(X,\Delta)$ be a pair.
The {\em orbifold fundamental group} of the pair is denoted by $\pi_1^{\rm orb}(X,\Delta)$, and defined as the quotient group
\[
\pi_1(X_{\rm reg}\setminus {\rm Supp}\,\Delta^{\rm st})\,\biggr/\,\left\langle \gamma_i^{m_i} \,\biggr\vert\, 1\le i\le k \mbox{ such that } {a_i}^{\rm st} = 1-\frac{1}{m_i} < 1\right\rangle_n, 
\]
where $\gamma_i$ denotes a loop around the prime component $\Delta_i$.
\end{definition}

In Appendix \ref{appA}, we justify what is meant by ``a loop around'' a prime divisor, and explain how this definition of the orbifold fundamental group is independent of any choice of loops.

The following lemma follows from the definition.

\begin{lemma}\label{lem:fun-group-increasing-coeff}
Let $(X,\Delta)$ and $(X,\Gamma)$ be two pairs with standard coefficients, such that
for every prime divisor $P$ on $X$, the orbifold index of $\Delta$ at $P$
divides the orbifold index of $\Gamma$ at $P$.
Then, there is a surjective group homomorphism $\pi_1^{\rm orb}(X,\Gamma)\twoheadrightarrow \pi_1^{\rm orb}(X,\Delta)$.
\end{lemma}

\begin{proof}
For every prime divisor $P$ in $X$, we denote by $d_P$ and $n_P$ its orbifold index in $\Delta$, respectively $\Gamma$: By assumption, $d_P$ divides $n_P$ for all $P$. In particular, any divisor $P$ in the support of the standard approximation $\Delta^{\rm st}$ has $d_P \ge 2$, hence $n_P\ge 2$, hence is contained in the support of $\Gamma^{\rm st}$.
Hence, there are two surjective group homomorphisms
\[\xymatrixcolsep{0mm}
\xymatrix{
& \pi_1(X_{\rm reg}\setminus {\rm Supp}\,\Gamma^{\rm st}) \ar@{->>}[ld]^{g} \ar@{->>}[rd]_{h}\ &\\
\pi_1^{\rm orb}(X,\Gamma) & & \pi_1^{\rm orb}(X,\Delta)\\
}\]
It suffices to factor $h$ through $g$ to conclude, i.e., show that $\ker g\subset \ker h$. The kernel $\ker g$ is generated by elements of the form ${\gamma_P}^{n_P}$, for every prime component $P$ in the support of $\Gamma^{\rm st}$. Fix one such prime component $P$, and note that ${\gamma_P}^{d_P}$ is in $\ker h$. The divisibility $d_P\mid n_P$ concludes.
\end{proof}

\begin{corollary}\label{cor:2cplt-sur}
Let $(X,\Delta)$ be a pair with standard coefficients and let $\Gamma \ge \Delta$ be a divisor with coefficients in $\{0,\frac{1}{2},1\}$.
Then, there is a surjective group homomorphism $\pi_1^{\rm orb}(X,\Gamma)\twoheadrightarrow \pi_1^{\rm orb}(X,\Delta)$.
\end{corollary}

\begin{proof}
For every prime divisor $P$ in $X$, we denote by $d_P$ and $n_P$ its orbifold index in $\Delta$, respectively $\Gamma$. If we can show that $d_P\mid n_P$ for every $P$, then Lemma \ref{lem:fun-group-increasing-coeff} concludes.
Fix a prime divisor $P$ in $X$. Since $\Gamma$ is a $2$-complement, we have that $n_P\in\{1,2,\infty\}$ and $d_P\le n_P$. So $(d_P,n_P)$ must be one of the following pairs:
    $(1,1),(1,2),(2,2),(d,\infty).$
In all of them, the left coordinate divides the right one, as wished.
\end{proof}

We now state some useful results on the behavior of the orbifold fundamental group {\it via} certain finite maps. We also state a Galois correspondence. The proofs are relegated to Appendix \ref{appA}.
We start by a definition.

\begin{definition}\label{def:compatible} 
Let $(X_0,\Delta_0)$ and $(X_1,\Delta_1)$ be two pairs. Let $p:X_1\to X_0$ be a proper morphism that is flat in codimension zero and one in $X_1$.
We say that $p$ is \emph{compatible} with the two pairs if it holds
    $$p^*\Delta_0^{\rm st}={\rm Ram}(p)+{\Delta}_1^{\rm st} \mbox{ and }p^*\Delta_0={\rm Ram}(p)+\Delta_1.$$
\end{definition}

\begin{remark}
    The following proper morphisms are flat in codimension zero and one in the source:
    \begin{enumerate}
        \item a finite surjective morphism $p:X'\to X$, with $X'$ and $X$ normal projective varieties;
        \item an equidimensional fibration $f:X\to B$,  with $X$ and $B$ normal projective varieties.
    \end{enumerate}
\end{remark}

We now state the existence of a functorial pushforward.

\begin{proposition}[\protect{=Propositions \ref{prop:can_compose} and~\ref{prop:functorialpf}}]
   Let $(X_0,\Delta_0), (X_1,\Delta_1)$ be two pairs. Any compatible morphism $p:(X_1,\Delta_1) \to (X_0,\Delta_0)$ induces a group homomorphism
$p_{\bullet}:\pi_1^{\rm orb}(X_1,\Delta_1)\to\pi_1^{\rm orb}(X,\Delta)$ by pushforward. Moreover, for any two compatible morphisms
$$(X_2,\Delta_2)\overset{q}{\longrightarrow} (X_1,\Delta_1) \overset{p}{\longrightarrow} (X_0,\Delta_0)$$
whose composition $p\circ q$ is also compatible, it holds $(p\circ q)_{\bullet} = p_{\bullet}\circ q_{\bullet}$.
 \end{proposition}

The next two results constitute a Galois correspondence for orbifold fundamental groups.
 
\begin{proposition}[\protect{=Proposition~\ref{prop:pushforward}}]
\label{prop:galois1}
    Let $p:(X',\Delta') \to (X,\Delta)$ be a compatible morphism between two pairs. If it also is a finite Galois cover, then we have an exact sequence of groups
    $$1\longrightarrow \pi_1^{\rm orb}(X',\Delta') \overset{p_{\bullet}}{\longrightarrow} \pi_1^{\rm orb}(X,\Delta) \longrightarrow {\rm Gal}(p) \longrightarrow 1.$$
\end{proposition}

\begin{proposition}[\protect{=Lemma~\ref{lem:galoiscorr}}]
\label{prop:galois2}
    Let $(X,\Delta)$ be a pair. Let $H$ be a normal subgroup of finite index in $\pi_1^{\rm orb}(X,\Delta)$. Then there exist a pair $(X_H,\Delta_H)$, and a finite Galois cover $p_H:X_H\to X$ compatible with the two pairs, such that the subgroups $H$ and
    ${p_H}_{\bullet}[\pi_1^{\rm orb}(X_H,\Delta_H)]$ coincide in $\pi_1^{\rm orb}(X,\Delta)$.
\end{proposition}

The next few lemmas describe the behavior of the orbifold fundamental group with respect to certain birational morphisms.

\begin{lemma}\label{lem:Qfact-mod-vs-fun}
Let $(X,\Delta)$ be an lc pair, and let $\varepsilon:(Y,\Delta_Y)\to (X,\Delta)$ be a small proper birational morphism.
Then the groups
$\pi_1^{\rm orb}(X,\Delta)$ and $\pi_1^{\rm orb}(Y,\Delta_Y)$ are isomorphic.
\end{lemma}

\begin{proof}
Let $E$ be the reduced exceptional locus of $\varepsilon$. Since $\varepsilon$ is small, the Zariski-closed set $E$ has codimension at least two in $Y$. Hence, excising $E\cap Y_{\rm reg}$ does not affect the fundamental group, and therefore
$$\pi_1^{\rm orb}(Y,\Delta_Y) 
= \pi_1^{\rm orb}(Y\setminus E, \Delta_Y)
\cong \pi_1^{\rm orb}(X\setminus\varepsilon(E), \Delta).$$
Since $\varepsilon(E)$ has codimension at least two in $X$,
we also have
$$\pi_1^{\rm orb}(X\setminus\varepsilon(E), \Delta) = \pi_1^{\rm orb}(X, \Delta),$$ 
which concludes this proof.
\end{proof}

\begin{lemma}\label{lem:dlt-mod-vs-fun}
Let $(X,\Delta)$ be a log canonical pair, and let $\varepsilon:(Y,\Delta_Y)\to (X,\Delta)$ be a dlt modification.
Then the groups
$\pi_1^{\rm orb}(X,\Delta)$ and $\pi_1^{\rm orb}(Y,\Delta_Y)$ are isomorphic.
\end{lemma}

\begin{proof}
Let $E$ be the reduced exceptional divisor of $\varepsilon$. Since $\varepsilon$ is a dlt modification, the whole divisor $E$ appears with coefficient $1$ in $\Delta_Y$. Hence,
$$\pi_1^{\rm orb}(Y,\Delta_Y) = \pi_1^{\rm orb}(Y\setminus E, \Delta_Y - E)\cong \pi_1^{\rm orb}(X\setminus\varepsilon(E), \Delta).$$
Since $\varepsilon(E)\cap X_{\rm reg}$ has codimension at least two in $X_{\rm reg}$,
hence removing it does not affect the fundamental group, in the sense that $\pi_1(X_{\rm reg}\setminus \varepsilon(E)\cup {\rm Supp}\,\Delta)=
\pi_1(X_{\rm reg}\setminus {\rm Supp}\,\Delta))$.
Hence, we have 
$$\pi_1^{\rm orb}(X\setminus\varepsilon(E), \Delta) = \pi_1^{\rm orb}(X, \Delta),$$ 
which concludes this proof.
\end{proof}

\begin{lemma}\label{lem:fun-under-surj}
Let $(X,\Delta)$ be a log canonical pair.
Let $\varphi:X\rightarrow Y$ be a proper birational morphism,
and let $\Delta_Y:=\phi_*\Delta$.
Then there is a surjective homomorphism
$\pi_1^{\rm orb}(Y,\Delta_Y)\twoheadrightarrow 
\pi_1^{\rm orb}(X,\Delta). $
\end{lemma}

\begin{proof}
Let $E$ be the reduced exceptional divisor of $\varphi$, and let $B$ be the strict transform of $\Delta_Y$ by $\varphi$. Clearly, $\Delta^{\rm st} - B^{\rm st}$ is effective and $\varphi$-exceptional. Hence, by Lemma \ref{lem:fun-group-increasing-coeff}, there is a surjective homomorphism
$$\pi_1^{\rm orb}(X, B + E)=\pi_1^{\rm orb}(X, B^{\rm st} + E)
\twoheadrightarrow\pi_1^{\rm orb}(X,\Delta^{\rm st})=\pi_1^{\rm orb}(X,\Delta).$$
Since $\varphi(E)\cap Y_{\rm reg}$ has codimension at least two in $Y_{\rm reg}$, removing it does not affect fundamental groups. Hence, we have
$$\pi_1^{\rm orb}(Y, \Delta_Y) = \pi_1^{\rm orb}(Y\setminus \varphi(E), \Delta_Y) \cong  \pi_1^{\rm orb}(X\setminus E, B) = \pi_1^{\rm orb}(X, B + E),$$
which concludes this proof.
\end{proof}


\begin{lemma}\label{lem:fun-under-surj-smooth}
Let $X$ and $Y$ be normal projective surfaces, and let $\varphi\colon X\rightarrow Y$ be a proper birational morphism. 
If the image of the reduced exceptional divisor $E$ of $\varphi$ lies in $Y_{\rm reg}$, then the fundamental groups $\pi_1(X_{\rm reg})$ and $\pi_1(Y_{\rm reg})$ are isomorphic.
\end{lemma}

\begin{proof}
    Blowing-up a smooth point leaves fundamental groups invariant, and since we have strong factorization for proper birational morphisms between surfaces, we see that the quasiprojective variety $X\setminus\varphi^{-1}(Y_{\rm sing})$ is smooth, and that
    $$\pi_1(Y_{\rm reg}) \cong \pi_1(X\setminus\varphi^{-1}(Y_{\rm sing})).$$
    Since $\varphi(E)$ is contained in $Y_{\rm reg}$, the preimage $\varphi^{-1}(Y_{\rm sing})$ consists in finitely many points on $X$ and is contained in $X_{\rm sing}$. But since we know that $X\setminus\varphi^{-1}(Y_{\rm sing})$ is smooth, we must have $\varphi^{-1}(Y_{\rm sing})=X_{\rm sing}$.
\end{proof}

We now present a type of fibrations that naturally induce exact sequences at the level of orbifold fundamental groups, relating the orbifold fundamental groups on the fiber, the base, and on the total space.

\begin{definition}\label{def:mult-divisor}
    Let $(X,\Delta)$ be a pair, and $f: X\to C$ be an equidimensional fibration. Denote by $\Delta_{\rm vert}$ the $f$-vertical part of the divisor $\Delta$. We say that $f$ is an {\it equimultiple fibration} if there exists a divisor $\Delta_C$ on $C$ with standard coefficients such that $f$ is compatible with the pairs $(X,{\Delta_{\mathrm{vert}}}^{\rm st})$ and $(C,\Delta_C)$.
    The (unique) such divisor $\Delta_C$ on $C$ is called the divisor {\it induced by $f$}.
\end{definition}

\begin{lemma}\label{lem:mfscompatible}
    Let $(X,\Delta)$ be a pair with $X$ a surface, and let $f:X\to C$ be a Mori fiber space onto a curve. Then $f$ is equimultiple.
\end{lemma}

\begin{proof}
    Since $f$ is a Mori fiber space from a surface to a curve, it has irreducible (not necessarily reduced) fibers. In particular, we can write the divisor
    ${\Delta_{\rm vert}}^{\rm st} + {\rm Ram}(f)$ as a pullback of an effective divisor $\Delta_C$ on $C$. We are left checking that $\Delta_C$ has standard coefficients.
    Let $p$ be a point on $C$. Let $m$ be the multiplicity of the fiber of $f$ at $p$. Let $n$ be the orbifold index of $D$ in ${\Delta_{\rm vert}}^{\rm st}$. It is then easy to check that the point $p$ has coefficient $1-\frac{1}{nm}$ in $\Delta_C$, a standard coefficient indeed.
\end{proof}

The following remark is an easy consequence of the canonical bundle formula.

\begin{remark}\label{rem:mfscompatible}
   If $f:(X,\Delta)\to (C,\Delta_C)$ is an equimultiple fibration from a surface to a curve, and the pair $(X,\Delta)$ is 
    \begin{itemize}
        \item log canonical Calabi-Yau, then $(C,\Delta_C)$ is a log canonical Fano or log canonical Calabi--Yau pair;
        \item log canonical Fano, then $(C,\Delta_C)$ is a log canonical Fano pair.
    \end{itemize}
\end{remark}

The next lemma may be called {\it Nori's trick}, as a tribute to a result by Nori \cite[Lemma 1.5.C]{Nor83}. The version we use here is closer to \cite[Lemma 3.13]{FM23}.

\begin{lemma}\label{lem:nori}
Let $f:(X,\Delta)\to (C,\Delta_C)$ be an equimultiple fibration. Let $i:F\hookrightarrow X$ be the inclusion of a general fiber of $f$. Then the following sequence is exact:
$$\pi_1^{\rm orb}(F,\Delta|_F)\overset{i_{\bullet}}{\longrightarrow} \pi_1^{\rm orb}(X,\Delta)\overset{f_{\bullet}}{\longrightarrow} \pi_1^{\rm orb}(C,\Delta_C)\to 1.$$
\end{lemma}

We finally recall a definition due to Braun \cite{Bra20}.

\begin{definition}
The {\em regional fundamental group} 
of a pair $(X,\Delta)$ at a closed point $x\in X$ is
denoted by $\pi_1^{\rm reg}(X,\Delta;x)$, and is defined as the inverse limit of all groups $\pi_1^{\rm orb}(U,\Delta|_U)$, where $U$ ranges over all analytic neighborhoods of $x$ in $X$.
\end{definition}

\subsection{Basic facts on residually finite groups}\label{subsec-rf} 
We recall a definition

\begin{definition}
A group $G$ is {\it residually finite} if it admits a set $(N_i)_{i\in I}$ of normal subgroups of finite index such that $$\bigcap_{i\in I} N_i=\{1\}.$$
\end{definition}

\begin{remark}
We make note of the following fact: For any subgroup $H$ of finite index in a group $G$, there is a normal subgroup of finite index $N \triangleleft G$ such that $N\subset H$.
Therefore, a group $G$ is residually finite if and only if it admits a set $(H_i)_{i\in I}$ of subgroups of finite index such that $$\bigcap_{i\in I} H_i=\{1\}.$$

 A finitely presented group only admits finitely many normal subgroups of a given index $N\in\nn$, thus countably many normal subgroups of finite, unspecified index. Therefore, a finitely presented group $G$ is residually finite if and only if it admits a decreasing sequence $(H_n)_{n\in \nn}$ of subgroups of finite index such that
    $$\bigcap_{n\in \nn} H_n=\{1\}.$$
\end{remark}

\begin{lemma}\label{lem:resfinimage}
    The image of a residually finite group by a group homomorphism is residually finite.
\end{lemma}

\begin{proof}
    It suffices to show that residual finiteness descends by surjective group homomorphisms. The fact that surjective group homomorphisms send normal subgroups to normal subgroups concludes.
\end{proof}

\begin{lemma}\label{lem:resfinsubgroup}
    If a group is residually finite, then every subgroup of finite index in it is residually finite. If a group has a subgroup of finite index that is residually finite, then the group itself is residually finite.
\end{lemma}

\begin{proof}
    Both assertions are easy to check, using the following fact: For any subgroup $H$ of finite index in a group $G$, there is a normal subgroup of finite index $N \triangleleft G$ such that $N\subset H$.
\end{proof}

\begin{lemma}\label{lem:res-finite-from-exact}
Let $G$ be a finitely generated group. Assume that there is an exact sequence
$$K\to G\to Q\to 1,$$
where $K$ is residually finite and $Q$ is virtually abelian.
Then, the group $G$ is residually finite.
\end{lemma}

\begin{proof}
By Lemma \ref{lem:resfinimage}, we can replace $K$ by the image of $K\to G$ in $G$, which is still residually finite. By Lemma \ref{lem:resfinsubgroup}, we can replace $Q$ by any subgroup of finite index of itself, and replace $G$ by a subgroup of finite index of itself accordingly, which is residually finite if and only if $G$ is.

Using the classification of finitely generated abelian groups, we thus reduce to the following situation
\begin{equation}\label{eq:2.22exseq}
   1\to K\to G\to \mathbb{Z}^r\to 1, 
\end{equation}
where $K$ is residually finite, $r\ge 0$, and we want to prove that $G$ is residually finite.

If $r=0$, this is obvious.
Let us prove the lemma for $r=1$.
Let $a\in G$ be a pre-image of $1\in\mathbb{Z}$. Let $(K_n)_{n\in \nn}$ be a decreasing sequence of normal subgroups of finite index in $K$, with trivial intersection. Consider the subgroups $H_n:=\langle a^{n!},K_n\rangle$ in $G$. They form a decreasing sequence of subgroups of $G$, and clearly have trivial intersection. Now, $H_n$ is a subgroup of finite index in $\langle a^{n!},K\rangle$, which is itself a normal subgroup of finite index in $G$, so $H_n$ is a subgroup of finite index in $G$. This concludes the case $r=1$.

We now proceed by induction. Assume that $r\ge 2$, and that the lemma already holds for any exact sequence of the form
$$1\to K'\to G'\to \zz^{s}\to 1,$$
with $K'$ residually finite and $0\le s < r$.
In the exact sequence \eqref{eq:2.22exseq}, we fix an element $b\in\zz^r$ such that $\zz^r/\langle b\rangle\simeq \zz^{r-1}$, and choose a preimage $a\in G$ of it. We obtain two exact sequence
$$1\to K\to \langle a, K\rangle \to\zz\to 1 \mbox{, and}$$
$$1\to \langle a, K\rangle \to G\to\zz^{r-1}\to 1.$$
We use the induction hypothesis for $s=1$ and $s=r-1$ to conclude.
\end{proof}

We prove a lemma about the rank of a finitely generated abelian group.

\begin{lemma}\label{lem:ab-group}
Let $G$ be a finitely generated abelian group, and let $r\in\nn$.
If there is a decreasing sequence of normal subgroups $(H_n)_{n\in \nn}$ with trivial intersection such that each quotient $G/H_n$ has rank at most $r$, then $G$ has rank at most $r$.
\end{lemma}

\begin{proof}
By the structure theorem for finitely generated abelian groups, we have $G\simeq \zz/{m_1}\zz \oplus \dots \oplus \zz/{m_s}\zz \oplus \zz^k $ where $m_1\mid \dots \mid m_s$. The rank of $G$ is then $s+k$.
Consider the subgroup 
$H = (m_{s}\zz)^k\subset \zz^k\subset G$.
Since $G/H$ is finite and the sequence $(H_n)_{n\in\nn}$ is decreasing, the sequence $(H_n/H\cap H_n)_{n\in\nn}$ is stationary. Since the sequence $(H_n)_{n\in\nn}$ has trivial intersection, the sequence of the quotients $H_n/H\cap H_n$ stations at the trivial group, i.e., $H_n \subset H$ for all $n$ large enough.
Hence, for $n$ large enough, there is a surjective homomorphism
$$G/H_n \twoheadrightarrow G/H\simeq \zz/{m_1}\zz \oplus \dots \oplus (\zz/{m_{s}}\zz)^{k+1}.$$
Comparing the ranks on each side, we get $s+k\le r$, as wished.
\end{proof}

We conclude this section with a few constructions of normal subgroups of reasonable index in a group that satisfy a certain type of exact sequence.

\begin{lemma}\label{lem:exact-seq}
 Let $G$ be a group sitting in a short exact sequence
    $$1\to N\to G\overset{s}{\longrightarrow} Q\to 1,$$
where the group $N$ is abelian and satisfies $|{\rm Aut}(N)| = \lambda\in \nn$, and the group $Q$ contains a normal abelian subgroup of finite index $\kappa$ isomorphic to $\mathbb{Z}^r$ for some $r\ge 1$. Then $G$ contains a normal subgroup $H$ of index $\kappa\lambda^r$ which sits in a central extension
$$1\to N\to H\to \zz^r\to 1.$$
\end{lemma}

\begin{proof}
    We consider the normal subgroup $(\lambda\zz)^r\triangleleft Q$ and its preimage $H$ by $s$ in $G$. It is normal and of index $\kappa\lambda^r$. We clearly still have an exact sequence
    $$1\to N\to H\to \zz^r\to 1.$$
    
    Fix $h\in H$. There exist $g\in G$ and $z\in N$ such that $h = zg^{\lambda}$. Since $N$ is abelian, conjugation by $z\in N$ is trivial on it. Since $N$ is normal, conjugation by $g$ is an automorphism of $N$, thus has order dividing the group cardinal $|{\rm Aut}(N)|=\lambda$. Hence, conjugation by $h$ acts trivially on $N$, as wished.
\end{proof}

\subsection{Adjunction, coregularity, complements, and complexity as a characterization of toric pairs}\label{subsec:ft-cy}
We first recall the adjunction formula for plt pairs, as it can be found in~\cite[Proposition 3.9]{Sho92}.

\begin{lemma}\label{lem:coeff-under-adj}
Let $(X,\Delta)$ be a plt pair. Assume that $\Delta=S+B$ with $S$ a prime divisor and $B=\sum_{j=1}^k b_jB_j$ an effective $\qq$-divisor. We define an effective $\mathbb{Q}$-divisor on $S$
\[
\Delta_S := \sum_{P\subset S}\left( 1-\frac{1}{m_P}+\sum_{j=1}^k \frac{m_{j,P}b_j}{m_P}\right)P,
\]
summing over all prime divisors $P$ in $S$, denoting by $m_P$ the orbifold index of the variety $X$ along $P$ and by
$m_{j,P}$ the multiplicity of $B_j$ along $P$.
Then $(K_X+\Delta)|_S \sim K_S+ \Delta_S.$
\end{lemma}

\begin{remark}\label{rem:plt}
In the notations of Lemma \ref{lem:coeff-under-adj}, the component $S$ is normal due to the plt assumption, see \cite[Lemma 3.6]{Sho92}.
\end{remark}

We now turn to dual complexes, used to define the important notion of coregularity of an lc pair.

\begin{definition}
Let $(X,\Delta)$ be a log canonical pair. 
Consider a log resolution of it $\varepsilon\colon (Y,\Delta_Y)\rightarrow (X,\Delta)$ so that it holds $K_Y+\Delta_Y = \varepsilon^*(K_X+\Delta)$.

The {\it dual complex} of $(X,\Delta)$, denoted by $\mathcal{D}(X,\Delta)$, is defined as the abstract simplicial complex given by the dual incidence relations of the irreducible components of $\lfloor\Delta_Y\rfloor$. (Its vertices correspond to irreducible components of $\lfloor\Delta_Y\rfloor$, and $k\ge 2$ of its vertices $E_1,\ldots E_k$ form a $k-1$ dimensional face iff the intersection $E_1\cap\ldots\cap E_k$ is non-empty.)
\end{definition}

The dual complex associated to a germ of a singularity has been present in the literature for a while, see \cite{Gor80}. Stepanov proved in \cite{Ste06} that its homotopy type was independent of the resolution chosen. Over the next few years, these notions were introduced and studied in an increasing degree of generality, see notably \cite{FS20} and references therein. We recall the following results, which will be used thoroughly.

\begin{remark}\label{rem:dual-complex}
    Let $(X,\Delta)$ be a log canonical Calabi-Yau surface pair. Its dual complex clearly has dimension at most $1$. Therefore, by \cite[Theorem 1.6]{FS20}, the dual complex $\mathcal{D}(X,\Delta)$ is piecewise linearly homeomorphic to one of the following manifolds with boundary:
    $$\emptyset,\{0\},\{0,1\},[0,1],S^1.$$
\end{remark}

\begin{definition}\label{def:coreg}
Let $(X,\Delta)$ be a log canonical pair. If it is a Calabi--Yau pair, we define its {\it coregularity}  as the integer
$${\rm coreg}(X,\Delta):= \dim X - \dim \mathcal{D}(X,\Delta)-1,$$
using the convention that an empty complex has dimension $-1$.

If it is a Fano pair, we define its {\em coregularity} as the integer 
$${\rm coreg}(X,\Delta):= \min \{{\rm coreg}(X,\Gamma)\mid \Gamma\ge\Delta\mbox{ and }(X,\Gamma)\mbox{ lc Calabi--Yau pair}\}.$$
\end{definition}

We now recall the definition and basic results on complement pairs.

\begin{definition}
Let $(X,\Delta)$ be an lc pair. Let $N\ge 1$ be an integer. We say that a $\qq$-divisor $\Gamma\ge \Delta$ is an {\it $N$-complement} of $(X,\Delta)$ if:
\begin{itemize}
    \item $(X,\Gamma)$ is an lc pair,
    \item $N\Gamma$ is an integral Weil divisor, and $N(K_X+\Gamma)\sim 0$. 
\end{itemize}
\end{definition}

The following result is an application of the cyclic covering trick. It is often refered to as the construction of the {\it index one cover}.

\begin{lemma}\label{lem:index-one-cover}
    Let $(X,\Gamma)$ be an $N$-complement for some $N\ge 1$ chosen to be minimal for this property. Then there exists a unique compatible finite cyclic cover $p:(Y,\Gamma_Y)\to (X,\Gamma)$ of degree $N$ such that $(Y,\Gamma_Y)$ is a $1$-complement.
\end{lemma}

\begin{proof}
    Consider the Zariski open subset 
    $U := X_{\rm reg}$ in $X$. By \cite[Proposition 4.1.6]{Laz04a}, there exists a finite cyclic cover $p:V\to U$ of degree $N$ such that the Cartier divisor $p^*(K_X+\Gamma)|_U$ is trivial.
    By \cite[Theorem 3.5]{DG94}, it extends to a unique finite cyclic cover $p:Y\to X$ for a normal projective variety $Y$. Let $\Gamma_Y$ be the divisor $p^*\Gamma- {\rm Ram}(p)$, which is effective with coefficients in $\{0,1\}$ and supported entirely above $\lfloor\Gamma\rfloor$. 
    Then $p$ is clearly compatible with the pairs $(Y,\Gamma_Y)$ and $(X,\Gamma)$. The pair $(Y,\Gamma_Y)$ is lc by \cite[Proposition 5.20]{KM98} and satisfies $K_Y+\Gamma_Y\sim p^*(K_X+\Gamma)\sim 0$ as wished.
\end{proof}

Certain Calabi--Yau pairs are automatically complements. 
The following lemma follows from~\cite[Corollary 3]{FMM22} and~\cite[Theorem 3]{FFMP22}.

\begin{lemma}\label{lem:coreg-vs-complementCY}
Let $(X,\Delta)$ be an lc Calabi--Yau pair with standard coefficients.
\begin{itemize}
\item If ${\rm coreg}(X,\Delta)=0$, then
$(X,\Delta)$ is a $2$-complement.
\item If ${\rm coreg}(X,\Delta)=1$, then $(X,\Delta)$ is an $N$-complement for some $N\in \{1,2,3,4,6\}$.
\end{itemize}
\end{lemma}

Certain Fano pairs automatically have reasonable complements too. The following lemma follows from~\cite[Theorems 4 and 5]{FFMP22}.

\begin{lemma}\label{lem:coreg-vs-complementFano}
Let $(X,\Delta)$ be an lc Fano pair with standard coefficients.
\begin{itemize}
\item If ${\rm coreg}(X,\Delta)=0$, then
$(X,\Delta)$ admits a $2$-complement of coregularity zero.
\item If ${\rm coreg}(X,\Delta)=1$, then $(X,\Delta)$ admits an $N$-complement of coregularity one, for some $N\in \{1,2,3,4,6\}$.
\end{itemize}
\end{lemma}

The following lemma allows to extend complements of a curve pair obtained by adjunction from a surface pair. It is a consequence of Kawamata-Viehweg vanishing, see~\cite[Proposition 6.7]{Bir19}.

\begin{lemma}\label{lem:lifting-complements} 
Let $(X,\Delta)$ be a plt Fano surface pair with standard coefficients.
Let $S$ be a component of $\Delta$ with coefficient one, and let $(S,\Delta_S)$ be the pair obtained by adjunction of $(X,\Delta)$ to $S$. 
If $(S,\Delta_S)$ admits an $N$-complement $\Gamma_S$, 
then $(X,\Delta)$ admits an $N$-complement $\Gamma$ such that $(S,\Gamma_S)$ is also obtained by adjunction of $(X,\Gamma)$ to $S$.
\end{lemma}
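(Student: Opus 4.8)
The plan is to realize $\Gamma$ as $\Delta$ plus an effective divisor obtained by lifting, through Kawamata--Viehweg vanishing, the effective divisor on $S$ that witnesses the complement $\Gamma_S$. Write $\Delta = S + B$, where $B\geq 0$ has standard coefficients and no component equal to $S$. By Lemma~\ref{lem:coeff-under-adj} we have $(K_X+\Delta)|_S = K_S+\Delta_S$, and since $(X,\Delta)$ is Fano the divisor $L := -N(K_X+\Delta)$ is ample. Restricting and using $N(K_S+\Gamma_S)\sim 0$ gives $L|_S = -N(K_S+\Delta_S)\sim N(\Gamma_S-\Delta_S)=:D_S$, which is effective because $\Gamma_S\geq\Delta_S$. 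The goal is to lift $D_S$ to an effective $D\in |L|$ with $D|_S = D_S$ and then set $\Gamma := \Delta + \tfrac1N D$. Throughout, the Prokhorov--Shokurov rounding of standard coefficients lets us assume that $N\Delta$, and hence $L$, is an integral Weil divisor.

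The lift exists as soon as the restriction map $H^0(X,\mathcal{O}_X(L))\to H^0(S,\mathcal{O}_S(L|_S))$ is surjective, for which it suffices that $H^1(X,\mathcal{O}_X(L-S))=0$, via the long exact sequence attached to $0\to\mathcal{O}_X(L-S)\to\mathcal{O}_X(L)\to\mathcal{O}_S(L|_S)\to 0$. To apply Kawamata--Viehweg I rewrite $L-S-K_X = -(N+1)(K_X+\Delta)+B$. Since $(X,S+B)$ is plt, the pair $(X,B)$ is klt (dropping the coefficient-one component $S$ leaves all log discrepancies positive, as $B$ has coefficients $<1$), and $-(N+1)(K_X+\Delta)$ is ample. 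Thus $L-S\equiv K_X + B + A$ with $(X,B)$ klt and $A$ ample, so Kawamata--Viehweg yields the vanishing and hence a lift $D\in|L|$ with $D|_S = D_S$.

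With $\Gamma = \Delta + \tfrac1N D$ we have $\Gamma\geq\Delta$, and $N(K_X+\Gamma) = N(K_X+\Delta)+D \sim N(K_X+\Delta)+L = 0$; moreover $N\Gamma = N\Delta + D$ is Weil since $D$ is integral and $N\Delta$ was arranged integral. Restricting, $(K_X+\Gamma)|_S = (K_S+\Delta_S)+\tfrac1N D_S = K_S+\Gamma_S$, which gives the required $\sim_\qq$ (in fact $\sim$) compatibility. It therefore only remains to verify that $(X,\Gamma)$ is log canonical, so that $\Gamma$ is a genuine $N$-complement.

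This last point is the main obstacle. Since $(S,\Gamma_S)$ is lc and $S$ occurs with coefficient one in $\Gamma$, inversion of adjunction shows $(X,\Gamma)$ is lc in a neighborhood of $S$. Globally one must exclude non-lc points lying away from $S$: the key input is the Kollár--Shokurov connectedness principle applied to $(X,\Gamma)$ with $K_X+\Gamma\sim_\qq 0$, which forces the non-klt locus, and in particular any putative non-lc locus, to be connected and to meet $S$; combined with lc-ness near $S$ this propagates lc-ness to all of $X$. I expect this connectedness/inversion-of-adjunction step to be the crux, whereas the vanishing argument and the bookkeeping of linear equivalences are routine.
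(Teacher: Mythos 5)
Your overall strategy (lift $N(\Gamma_S-\Delta_S)$ through Kawamata--Viehweg vanishing, set $\Gamma=\Delta+\tfrac1N D$, then check log canonicity by inversion of adjunction plus a connectedness argument) is the same as in the proof the paper points to, namely \cite[Proposition 6.7]{Bir19}, and your vanishing computation $L-S-K_X=-(N+1)(K_X+\Delta)+B$ is correct. But the step you yourself single out as the crux fails as written. The Koll\'ar--Shokurov connectedness principle requires $-(K_X+\Gamma)$ to be nef \emph{and big}; for your pair $K_X+\Gamma\sim_{\mathbb{Q}}0$, bigness fails, and so does the conclusion: the non-klt locus of a log Calabi--Yau surface pair can be disconnected. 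Indeed $(\mathbb{P}(\mathcal{O}_E\oplus L),s_0+s_\infty)$ over an elliptic curve --- precisely the configuration of Example~\ref{ex:p1-over-elliptic} --- satisfies $K_X+s_0+s_\infty\sim 0$ and has non-klt locus $s_0\sqcup s_\infty$; this is also why the dual complex of a log Calabi--Yau surface can be two points, as recalled in the paper via \cite{FS20}. Moreover, even granting connectedness, it would prove nothing: the non-klt locus contains $S$ by construction, so ``connected and meeting $S$'' is automatic and does not force the non-lc part to enter the neighborhood of $S$ where inversion of adjunction gives log canonicity; a non-lc point could be attached to $S$ through a chain of lc centers. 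The standard repair is an interpolation argument: set $\Delta_t:=(1-t)\Delta+t\Gamma$, note that $-(K_X+\Delta_t)\equiv(1-t)\bigl(-(K_X+\Delta)\bigr)$ is ample for $t<1$, and let $t_0$ be the lc threshold. If $t_0<1$, then plt-ness of $(X,\Delta)$ together with lc-ness of $(X,\Gamma)$ near $S$ forces every log discrepancy of $(X,\Delta_{t_0})$ over a neighborhood of $S$, except that of $S$ itself, to be strictly positive, so ${\rm Nklt}(X,\Delta_{t_0})=S\sqcup W$ with $W\neq\emptyset$ disjoint from $S$; this contradicts connectedness for $(X,\Delta_{t_0})$, whose anti-log-canonical divisor \emph{is} ample. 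Hence $t_0=1$ and $(X,\Gamma)$ is lc. Without this perturbation your argument does not close.

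There is a second gap, of bookkeeping but not harmless: for $|L|$ and for ``$N\Gamma$ is Weil'' to make sense you need $N\Delta$ to be integral, which for standard coefficients holds only when every orbifold index divides $N$. The lemma is stated, and used in the paper, without that hypothesis: in Cases 1.2--1.3 of the proof of Proposition~\ref{prop:lc-Fano-case} one lifts a $3$-complement while $\Delta$ has a component $L_0$ of coefficient $\tfrac12$, and the resulting complement carries $\tfrac23 L_0$. No divisor of the form $\Delta+\tfrac13 D$ with $D$ integral has coefficient $\tfrac23$ on $L_0$ (you would get $\tfrac12+\tfrac k3$), so your construction structurally cannot produce the required complement in exactly the situations where the paper needs the lemma. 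Nor is the rounding reduction free: replacing $\Delta$ by $\tfrac1N\lceil N\Delta\rceil$ increases $K_X+\Delta$ and may destroy ampleness, so you cannot simply ``assume'' integrality. The correct fix, again as in \cite[Proposition 6.7]{Bir19}, is to lift sections of $|-NK_X-NS-\lfloor(N+1)B\rfloor|$ with $B=\Delta-S$, and to track how this round-up interacts with adjunction on $S$ (Lemma~\ref{lem:coeff-under-adj}); with that modification, combined with your vanishing argument and the interpolation step above, the proof goes through.
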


We finally recall a characterization 
of toric varieties conjectured and proven for surfaces by Shokurov in \cite[Theorem 6.4]{Sho00} (it is also known to hold more generally by the work of Brown, McKernan, Svaldi, Zong \cite[Theorem 1.2]{BMSZ18}).

\begin{definition}
\label{def:complexity}
Let $(X,\Delta)$ be an lc surface pair such that $-K_X-\Delta$ is nef. Assume that $X$ is $\qq$-factorial.
The {\em (absolute) complexity} of $(X,\Delta)$ is defined as
\[
c(X,\Delta):=\rho(X)+\dim X - |\Delta|,
\]
where $|\Delta|$ denotes the sum of the coefficients of $\Delta$. 
\end{definition}

\begin{proposition}\label{prop:toric-characterization}
Let $(X,\Delta)$ be an lc surface pair such that $-K_X-\Delta$ is nef.
Then $c(X,\Delta)\geq 0$, and if moreover $c(X,\Delta)<1$, then $X$ is a toric surface and $\lfloor\Delta \rfloor
$ is contained in its toric boundary divisor. 
\end{proposition}

\subsection{Pot-pourri of useful lemmas}

\begin{lemma}\label{lem:Gor-MMP}
Let $X$ be a klt surface that admits a $1$-complement
$\Gamma$.
Let $X\rightarrow X_0$ be a step of the $K_X$-MMP contracting an irreducible curve $C$ to a point $x_0$.
Then either $C$ is a component of $\Gamma$, or $\Gamma$ and $C$ have exactly one intersection point $p$ and at most two components of $\Gamma$ contain $p$.

Furthermore, if the variety $X$ is smooth at a point $q\in C$, then at most one component of $\Gamma$ (besides possibly $C$ itself) contains $q$.
\end{lemma}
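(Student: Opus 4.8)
The plan is to split into the two alternatives according to whether or not $C$ is a component of $\Gamma$, and to extract the required connectedness and local counting from two facts: that $\Gamma$ is reduced, and that $K_X+\Gamma\sim 0$ restricts trivially to $C$. First I record that, since $\Gamma$ is a $1$-complement, it is an effective Weil divisor with $(X,\Gamma)$ log canonical; hence all its coefficients are integers at most $1$, so $\Gamma$ is reduced. Intersecting $K_X+\Gamma\sim 0$ with the contracted curve gives $\Gamma\cdot C=-K_X\cdot C$, which is positive because $C$ is $K_X$-negative; in particular $\Gamma$ meets $C$. If $C$ is a component of $\Gamma$ we are in the first alternative, and for the final assertion we note that at a smooth point $q\in C$ the log canonicity of $(X,\Gamma)$ with $\Gamma$ reduced forces at most two components of $\Gamma$ through $q$, one of which is $C$, so at most one other component passes through $q$. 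From now on assume $C\not\subset\Gamma$.

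Next I bound $\Gamma\cdot C$. Since $X$ is $\mathbb{Q}$-factorial (we are running an MMP), $C$ is $\mathbb{Q}$-Cartier and adjunction for the curve $C$ gives $(K_X+C)\cdot C=\deg(K_{C^\nu}+\mathrm{Diff})\ge 2g(C^\nu)-2\ge -2$, where $C^\nu$ is the normalization and $\mathrm{Diff}\ge 0$. Combined with $C^2<0$ this yields $\Gamma\cdot C=-K_X\cdot C\le 2+C^2<2$. This strict inequality powers the local counting: at any smooth point of $X$ on $C$, two distinct components of $\Gamma$ through that point would each meet $C$ with local multiplicity at least $1$, forcing $\Gamma\cdot C\ge 2$, a contradiction.

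The heart of the argument, and the step I expect to be the main obstacle, is to show that $\Gamma$ and $C$ meet in a single point. I would deduce this from the Koll\'ar--Shokurov connectedness principle applied to the birational contraction $f\colon X\to X_0$. Because $X$ is klt and $\Gamma$ is reduced, the non-klt locus of $(X,\Gamma)$ equals $\supp\Gamma$; because $f$ is birational, $-(K_X+\Gamma)\sim_f 0$ is $f$-nef and $f$-big (its restriction to the generic fibre, a point, is big), so connectedness gives that $\supp\Gamma\cap f^{-1}(x_0)=\supp\Gamma\cap C$ is connected. As $C\not\subset\Gamma$, this set is finite and nonempty, hence a single point $p$. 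The delicate point is to invoke the correct relative form of connectedness in the crepant ($\equiv_f 0$) case: the usual statement requires $f$-bigness, which holds here precisely because $f$ is birational — the same mechanism by which connectedness of the non-klt locus is classically established on a log resolution. If one prefers to avoid the general theorem, this connectedness can be proven by hand via a Kawamata--Viehweg vanishing argument on $f$, exactly as in the birational proof of the connectedness principle.

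Finally I assemble the pieces. Log canonicity of $(X,\Gamma)$ at $p$ forces at most two components of the reduced divisor $\Gamma$ through $p$, giving the second half of the first alternative. For the concluding assertion, let $q\in C$ be a smooth point of $X$; any component of $\Gamma$ through $q$ meets $C$ at $q$, so by the previous paragraph $q$ must be the unique intersection point $p$. If two distinct components of $\Gamma$ passed through $q$, the local-multiplicity estimate of the second paragraph would give $\Gamma\cdot C\ge 2$, contradicting $\Gamma\cdot C<2$; hence at most one component of $\Gamma$ contains $q$, as required.
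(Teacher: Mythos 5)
Your proof is correct. For the core step --- that $\Gamma\cap C$ is a single point when $C\not\subset\Gamma$ --- you follow the same route as the paper: the connectedness principle applied to the birational contraction $X\to X_0$ (you invoke Koll\'ar--Shokurov, the paper cites \cite[Theorem~1.1]{FS20}), using that relative bigness is automatic for birational morphisms and that the non-klt locus of $(X,\Gamma)$ equals $\supp\Gamma$ because $X$ is klt and $\Gamma$ is reduced. You diverge on the two counting statements, and there your argument is genuinely different and more elementary. For ``at most two components through $p$'', the paper pushes $\Gamma$ forward and applies \cite[Theorem~1.2]{MS21} at $x_0\in X_0$, using finiteness of the local class group of the klt surface $X_0$; you assert the bound directly at $p$ as a consequence of log canonicity. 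That assertion is true for surface pairs, but since $p$ may be a singular point of $X$ it is the one place in your write-up that needs justification: either cite \cite[Theorem~1.2]{MS21} as the paper does, or note that the klt germ $(X;p)$ admits a finite smooth quasi-\'etale cover, that log canonicity pulls back, and that a reduced lc boundary through a smooth surface point has multiplicity at most $2$. For the ``furthermore'' clause, the paper's proof is considerably heavier: it shows that $(X,\Gamma_1+\Gamma_2;q)$ and $(X_0,\Gamma_{1,0}+\Gamma_{2,0};x_0)$ are formally toric by \cite[Theorems 1 and 2]{MS21}, and reaches a contradiction through a toric resolution and a simultaneous blow-down computation of self-intersections. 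Your replacement --- the adjunction bound $\Gamma\cdot C=-K_X\cdot C\le 2+C^2<2$ (valid because the different on $C^{\nu}$ is effective) played against the local intersection multiplicities $\ge 1$ of two components at the smooth point $q$, together with the multiplicity-at-most-two bound when $C$ itself is a component of $\Gamma$ --- is substantially shorter, self-contained, and avoids the toric machinery entirely, at no loss of generality for this statement.
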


\begin{proof}
Assume that $C$ is not a component of $\Gamma$. Since $-K_X\cdot C > 0$, the curve $C$ intersects $\Gamma$ in at least one point.
Let $\Gamma_0$ be the push-forward of $\Gamma$ in $X_0$.
The pair $(X_0,\Gamma_0)$ is log canonical.

Since $X_0$ is a klt surface, it is $\mathbb{Q}$-factorial, and its local class group at any point is thus finite, see \cite[Remark 3.29]{MS21}.
By~\cite[Theorem 1.2]{MS21}, at most two components of $\Gamma_0$ contain the point $x_0$, i.e., either one or two components of $\Gamma$ intersect $C$.

By~\cite[Theorem 1.1]{FS20}, and since the contraction $X\to X_0$ is birational, the intersection of $C$ with the non-klt locus of $(X,\Gamma)$ is connected. It consists of finitely many points, hence of exactly one point $p$. In particular, $\Gamma\cap C=\{p\}$, as wished.

Now, assume that the variety $X$ is smooth at $q\in C$
and that there are two components $\Gamma_1$ and $\Gamma_2$ of $\Gamma$ that are different from $C$ and contain $q$.
Let $\Gamma_{1,0}$ and $\Gamma_{2,0}$ be the images of $\Gamma_1$ and $\Gamma_2$ in $X_0$.
By \cite[Theorem 1 and 2]{MS21}, the pairs $(X,\Gamma_1+\Gamma_2;q)$ and $(X_0,\Gamma_{1,0}+\Gamma_{2,0};x_0)$ are formally toric surface singularities.
Let $Y_0\rightarrow X_0$ be a toric resolution of singularities.
Write $(Y_0,\Gamma_{Y_0,0}+\Gamma_{Y_0,1}+E_{0,1}+\dots+E_{0,r})$ for the log pullback of $(X_0,\Gamma_{1,0}+\Gamma_{2,0})$ to $Y_0$.
By performing further toric blow-ups on $Y_0$, we may assume that the center of $C$ on $Y_0$ is not a toric strata.
Thus, we may extract $C$ by blowing-up $E_{0,j}$ at a smooth point not contained in another $E_{0,i}$. We write $Y\rightarrow Y_0$ for the smooth blow-up extracting $C$. 
We write $(Y,\Gamma_{Y,0}+\Gamma_{Y,1}+E_1+\dots+E_r)$ for the log pullback of $(X,\Gamma_1+\Gamma_2)$ to $Y$.
By construction, we have $E_j^2=E_{j,0}^2-1$ and $E_i^2=E_{i,0}^2$ for every $i\neq j$.
As $Y$ and $X$ are smooth
and $Y\rightarrow X$ is a projective birational morphism, the variety $X$ can be obtained from $Y$ by performing a sequence of smooth blow-downs, i.e., 
contracting a sequence of $(-1)$-curves.
By blowing down $(-1)$-curves simultaneously on $Y$ and $Y_0$ over $X$ and $X_0$ respectively, we arise to a model on which the strict transform of $E_{j,0}$ has self-intersection zero. This leads to a contradiction.
\end{proof}

The following lemma is an important consequence of Lemma \ref{lem:nori}. Under particular assumptions, it describes a {\em large} subgroup of the orbifold fundamental group as a quotient of a local orbifold fundamental group. We will use it to prove the residual finiteness of certain orbifold fundamental groups.

\begin{lemma}\label{lem:Nori-reg-fun}
Let $f:(X,\Delta)\to (C,\Delta_C)$ be an equimultiple fibration.
Let $\Delta_{\rm hor}$ be the horizontal part of $\Delta$ with respect to the fibration $f$.
Let $x\in X$ a point. Assume that for any small enough analytic open ball $V$ containing $f(x)\in C$, the point $x$ is contained in every connected component of $\Delta_{\rm hor}\cap f^{-1}(V)$, except for possibly one.
Moreover, denote by $F_x$ the fiber of $f$ containing $x$. Assume that  $f_{\bullet}(\gamma_{F_x})$ generates a subgroup of finite index $N$ in $\pi_1(C,\Delta_C)$.
Then, the image of the group homomorphism (induced by inclusion)
\[
\pi_1^{\rm reg}(X,\Delta;x)\rightarrow
\pi_1^{\rm orb}(X,\Delta)
\]
is a subgroup of finite index at most $N$ in $\pi_1^{\rm orb}(X,\Delta)$.
\end{lemma}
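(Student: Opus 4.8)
The plan is to combine Nori's exact sequence (Lemma~\ref{lem:nori}) with a local analysis near $x$. Write $G:=\pi_1^{\rm orb}(X,\Delta)$ and $Q:=\pi_1(C,\Delta_C)$, and let $f_{\bullet}\colon G\to Q$ be the homomorphism of Remark~\ref{rem:mfscompatible}. By Lemma~\ref{lem:nori} the map $f_{\bullet}$ is surjective and its kernel $K$ equals the image of $\pi_1(F,\Delta|_F)\to G$, for $F$ a general fiber. Denote by $H$ the image of $\pi_1^{\rm reg}(X,\Delta;x)\to G$. The whole strategy reduces to establishing the two containments $\gamma_{F_x}\in H$ and $K\subseteq H$: granting these, since $K=\ker f_{\bullet}\subseteq H$ one has $[G:H]=[Q:f_{\bullet}(H)]$, while $f_{\bullet}(H)$ contains $f_{\bullet}(\gamma_{F_x})$, hence the index-$N$ subgroup $\langle f_{\bullet}(\gamma_{F_x})\rangle$, yielding $[G:H]\le N$.

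The first containment is the easy one. Since $f$ is equimultiple the fiber $F_x$ is irreducible, and as $x\in F_x$ a small meridian loop around $F_x$ based near $x$ is contained in every analytic neighborhood $U$ of $x$. These meridians are compatible under the restriction maps $\pi_1^{\rm orb}(U',\Delta|_{U'})\to\pi_1^{\rm orb}(U,\Delta|_U)$ for $U'\subseteq U$, so they assemble into an element of the inverse limit $\pi_1^{\rm reg}(X,\Delta;x)$ whose image in $G$ is $\gamma_{F_x}$. Thus $\gamma_{F_x}\in H$ and $f_{\bullet}(\gamma_{F_x})\in f_{\bullet}(H)$, which is all that is needed for the index bound once $K\subseteq H$ is known.

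The substance of the proof is the second containment $K\subseteq H$, and this is where the hypothesis on horizontal components enters. Fix a small ball $V\ni f(x)$ and a neighborhood $U$ of $x$, and choose a general value $y\in V$; as $y\to f(x)$ the punctures $\Delta_{\rm hor}\cap F_y$ of the rational fiber $F_y\cong\mathbb{P}^1$ accumulate on the finitely many points of $\Delta_{\rm hor}\cap F_x$. By the hypothesis that $x$ lies on all but one connected component of $\Delta_{\rm hor}\cap f^{-1}(V)$, all of these punctures except those of a single component lie arbitrarily close to $x$, hence inside $U$ for $V$ and $y$ suitably chosen. The meridian loops around the punctures of $F_y$ contained in $U$ lie in $\pi_1^{\rm orb}(U,\Delta|_U)$, hence map into $H$; and because $F_y$ is rational the product of all puncture-meridians is trivial in $\pi_1(F,\Delta|_F)$, so dropping the one excluded meridian still leaves a generating set of $\pi_1(F,\Delta|_F)$. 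Passing to $G$ gives $K\subseteq H$, and the index computation of the first paragraph then finishes the argument.

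I expect the main obstacle to be this second step, and within it two points deserve care. First, one must check that the single excluded connected component of $\Delta_{\rm hor}\cap f^{-1}(V)$ contributes only one meridian relation, so that the near-$x$ meridians genuinely generate the whole fiber group $\pi_1(F,\Delta|_F)$ rather than a proper subgroup; this is exactly where the rationality of the fiber (product of all meridians trivial) and the precise reading of the hypothesis are used, and it is the delicate point to get right. Second, there is bookkeeping needed to promote loops living in individual neighborhoods $U$ to a genuine element of the inverse limit $\pi_1^{\rm reg}(X,\Delta;x)$, for which one uses that the relevant meridians can be chosen compatibly as $U$ shrinks.
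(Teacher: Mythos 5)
Your proof is correct and takes essentially the same route as the paper's: Nori's exact sequence, the key claim that the meridians of a nearby general fiber lying in a small neighborhood of $x$ generate the whole fiber group (using rationality of the fiber and the hypothesis on horizontal branches), membership of $\gamma_{F_x}$ in the regional image, and the coset count through $f_{\bullet}$. Your two-containment formulation ($K\subseteq H$ and $\gamma_{F_x}\in H$) is just a repackaging of the paper's commutative-diagram argument, and even the delicate point you flag --- that the one excluded connected component should contribute only one puncture to the nearby fiber --- is treated at the same level of detail there.
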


\begin{proof}
Let $U$ be a small enough connected simply-connected analytic neighborhood of $x\in X$ such that $\pi_1^{\rm reg}(X,\Delta;x)=\pi_1^{\rm orb}(U,\Delta|_U)$.
Let $p\in U$ be a general point
and $F_p$ be the fiber of $f$ containing $p$.
Let $\Delta|_{F_p}$ be the restriction of $\Delta$ to $F_p$.
By Lemma~\ref{lem:nori}, we have the following exact sequence:
\[
\xymatrix{
\pi_1^{\rm orb}(F_p,\Delta|_{F_p},p)\ar[r]^-{p_1}&
\pi_1^{\rm orb}\left(X,\Delta,p\right)\ar[r]^-{f_{\bullet}}&
\pi_1^{\rm orb}\left( 
C,\Delta_C, f(p) \right)\ar[r] & 1.
}
\]
where the points $p$ and $f(p)$ are precised as chosen base points for these fundamental groups.
Let $U_p$ be the analytic subset $F_p\cap U$ in $F_p$. 
Considering the inclusions 
$U_p \hookrightarrow U \hookrightarrow X$ and $U_p \hookrightarrow F_p \hookrightarrow X$, we get 
a commutative diagram:
\[
\xymatrix{ 
\pi_1^{\rm orb}(U_p,\Delta|_{U_p},p)\ar[r]\ar[d]^-{\pi_1} & \pi_1^{\rm orb}(U,\Delta|_{U},p)\ar[d]^-{\pi_2} & & \\ 
\pi_1^{\rm orb}(F_p,\Delta|_{F_p},p)\ar[r]^-{p_1}&
\pi_1^{\rm orb}\left(X,\Delta,p\right)\ar[r]^-{f_{\bullet}}&
\pi_1^{\rm orb}\left( 
C,\Delta_C,f(p) 
\right)\ar[r] & 1 .
}
\]
We claim that $\pi_1$ is surjective: Indeed, the intersection $F_p\cap \Delta_{\rm hor}$ contains finitely many, say $k$, distinct points, each corresponding to a distinct local branch of $\Delta_{\rm hor}$. Since $F_p$ is a smooth rational curve, the loops $\gamma_1,\ldots,\gamma_{k-1},\gamma_k$ around the points of $F_p\cap \Delta_{\rm hor}$ satisfy $$\langle \gamma_1,\ldots,\gamma_k\mid \gamma_1\cdots\gamma_k = 1 \rangle\twoheadrightarrow\pi_1(F_p,\Delta|_{F_p}).$$ 
Since the point $x$ is contained in all local branches of $\Delta_{\rm hor}$ except possibly one, the open subset $U_p$ in $F_p$ contains all of the $k$ points $F_p\cap \Delta_{\rm hor}$ except possibly one. Hence, all the loops $\gamma_1,\ldots,\gamma_k$, except possibly one, are in the image of $\pi_1$. But any $k-1$ of these loops already generate the whole group $\pi_1(F_p,\Delta|_{F_p})$. So the group homomorphism $\pi_1$ is surjective.

Since $\pi_1$ is surjective, the image of $\pi_2$ contains the image of $p_1$, that is the kernel of $f_{\bullet}$. Moreover, the loop $\gamma_{F_x}$ appears in the image of $\pi_2$, thus and by our finite index assumption, the subgroup $f_{\bullet}{\rm Im}(\pi_2)$ has finite index at most $N$ in $\pi_1^{\rm orb}(C,\Delta_C)$.
The set of left ${\rm Im}(\pi_2)$-cosets in $\pi_1^{\rm orb}(X,\Delta)$ is thus in bijection by $f_{\bullet}$ with the set of left $f_{\bullet}{\rm Im}(\pi_2)$-cosets in $\pi_1^{\rm orb}(C,\Delta_C)$, which is finite of cardinal at most $N$.
\end{proof}

\section{Proof of Theorem \ref{introthm:fun-group-klt}}\label{sec:klt-cy-stand}

\begin{proof}[Proof of Theorem \ref{introthm:fun-group-klt}]
Let $(X,\Delta)$ be a klt Calabi--Yau surface pair with standard coefficients. By~\cite[Lemma~2.7]{CGG23}, there exists a Zariski open set $U\subset X$ such that $\codim_{X\setminus U}(X)\geq 3$ and $(U,\Delta|_{U})$ is an orbifold. Since $X$ is a surface, we have $U=X$ and thus~\cite[Theorem~4.2]{CC14} provides the following exact sequence
 \begin{center}
     $1\rightarrow A_0\rightarrow \pi_1^{\rm orb}(X,\Delta)\rightarrow G_0\rightarrow 1$,
 \end{center}
 where $A_0$ is an abelian group of rank at most $4$ and $G_0$ is a finite group.

By Proposition~\ref{prop:galois2}, there is a finite orbifold cover $\pi:(X_0,\Delta_0)\rightarrow (X, \Delta)$ with Galois group $G_0$ and with an equality of subgroups $\pi_{\bullet}\pi_1^{\rm orb}(X_0,\Delta_0)=A_0$. 
By the Riemann--Hurwitz formula and \cite[Proposition 5.20]{KM98}, the pair $(X_0,\Delta_0)$ is klt and Calabi--Yau.

By the non-vanishing result \cite[V.4.9 Corollary]{Nak04}, there is a positive integer $M$ such that $M(K_{X_0}+\Delta_0)\sim 0$, hence the index one cover of $(X_0,\Delta_0)$ given by Lemma \ref{lem:index-one-cover} is a klt Gorenstein surface $X_1$ with $K_{X_1}\sim 0$. In particular, it has canonical singularities. Take the minimal resolution of singularities $r:Y\rightarrow X_1$. Then $Y$ is a K3 surface or an abelian surface.

By uniqueness of the index one cover, the composition of finite covers $(X_1,0)\rightarrow (X,\Delta)$ remains a compatible Galois cover. We denote its Galois group by $G_1$. Then we have a short exact sequence
\begin{center}
    $1\rightarrow \mathbb{Z}/M\mathbb{Z}\rightarrow G_1\rightarrow G_0\rightarrow 1$.
\end{center}
By uniqueness of the minimal resolution, the action of $G_1$ on $X_1$ lifts to an action by automorphisms on $Y$.

If $Y$ is an abelian surface, then it contains no rational curves, thus $Y\simeq X_1$.
Let $T\triangleleft G_1$ be the normal subgroup of $G_1$ acting on $X_1$ by translations, and consider the abelian surface $S:=X_1/T$ with the naturally induced compatible Galois cover $(S,0)\rightarrow (X,\Delta)$ of Galois group isomorphic to $Q:=G_1/T$. The Galois action of $Q$ on $S$ fixes the origin of $S$, hence $Q$ is contained in the group of Lie group automorphisms of $S$. Now \cite[Lemma~3.2 and 3.3]{Fujiki88} imply that $|Q|\leq 96$. We reach the following short exact sequence
\begin{center}
    $1\rightarrow \mathbb{Z}^4 \rightarrow \pi_1^{\rm orb}(X,\Delta)\rightarrow Q\rightarrow 1$.
\end{center}

When $Y$ is a $K3$ surface, \cite[Theorem~1]{Kondo99} implies that $|G_0|\le |G_1|\leq 3840$. In that case, note that $\pi_1^{\rm orb}(X,\Delta)$ is in fact isomorphic to $G_1$.
\end{proof}

\section{Proof of Theorem \ref{introthm:Jordan-dlt-Fano} and more}\label{sec:Jordan}

We adopt the following definitions: A normal variety $X$ is called {\it rationally connected} if for any two general points $x,y$ in $X$, there is a rational curve on $X$ passing through $x$ and $y$. This property is invariant by birational equivalence, and equivalent to rationality in dimension $1$ and $2$.
A normal variety $X$ is called {\it rationally chain connected} if for any two general points $x,y$ in $X$, there is a finite chain of rational curves $C_1,\ldots, C_n$ on $X$ such that $x\in C_1,y\in C_n,$ and $C_i\cap C_{i+1}\neq\emptyset$ for $1\le i\le n-1$. Note that the notions of rational connectedness, and of rational chain connectedness coincide for complex projective varieties that are smooth, or that have klt singularities as well (see, e.g.,~\cite[Theorem IV.3.10]{Kol96}).

By~\cite[Theorem 1.8]{PS14}, the following constants are well-defined.
\begin{definition}
    Fof $d\in\nn$, we define the {\it rationally connected Jordan constant} $J(d)$ as the minimal constant $J$ such that, for any rationally connected variety $X$ of dimension $d$ and for any finite group $Q\subset {\rm Aut}(X)$, there exists a normal abelian subgroup of index at most $J$ in $Q$.
\end{definition}

\begin{remark}\label{rem:jordan-values}
    It is well-known that $J(1)=60$, as it is the Jordan constant of the projective linear group ${\rm PGL}_2(\cc)$. The value $J(2)=7200$ was computed by Yasinsky in \cite[Theorem 1.9]{Yas19}. The next values are unknown.
\end{remark}

The next {\it ad hoc} definition is by no means standard, but should simplify the next few statements.

\begin{definition}
    A log canonical pair $(X,\Delta)$ is said to be {\it hereditarily rationally connected} if, for any compatible finite Galois cover $p:(X',\Delta')\to (X,\Delta)$, the normal projective variety $X'$ is rationally connected. 
\end{definition}

The next lemma is profusely used in our proofs.

\begin{lemma}\label{lem:res-finite}
Let $(X,\Delta)$ be a log canonical pair of dimension $d$. Suppose that
\begin{enumerate}
    \item $\pi_1^{\rm orb}(X,\Delta)$ is residually finite;
    \item $(X,\Delta)$ is hereditarily rationally connected. 
\end{enumerate}
Then the group $\pi_1^{\rm orb}(X,\Delta)$ admits a normal abelian subgroup of index at most $J(d)$.
\end{lemma}

\begin{proof}
Denote by $G$ the group $\pi_1^{\rm orb}(X,\Delta)$. We have $(H_n)_{n\in \nn}$ a decreasing sequence of normal subgroups of finite index in $G$ such that $\bigcap_{n\in \nn} H_n =\{1\}$. By Proposition \ref{prop:galois2} (Galois correspondence), we have corresponding compatible finite Galois covers $p_n:(Y_n,\Delta_n)\to (X,\Delta)$ such that ${\rm Gal}(p_n)\simeq G/H_n$ acts on $Y_n$.
Since $Y_n$ is rationally connected and by definition of $J(d)$, there exists a normal abelian subgroup
$A_n \triangleleft G/H_n$ that has index at most $J(d)$ in $G/H_n$
Let $G_n$ be the preimage of $A_n$ in $G$: It is a normal subgroup of $G$ of index at most $J(d)$.
Since $G$ is a finitely presented group, there are only finitely many subgroups of $G$ of index at most $J(d)$.
Hence, there exists $G_N\triangleleft G$ a normal subgroup of index at most $J(d)$
that agrees with $G_n$ for all $n\ge N$.

We argue that $G_N$ is an abelian group. 
Let $x,y\in G_N$, and write $z:=xyx^{-1}y^{-1}$. Note that for each $n\ge N$, we have an exact sequence
\begin{equation}\label{eq:amses}
1\rightarrow H_n \rightarrow G_N \rightarrow A_n \rightarrow 1,     
\end{equation}
where we recall that $A_n$ is a finite abelian group.
The image of $z$ in $A_n$ is trivial, so $z\in H_n$ for every $n\ge N$.
But they have trivial intersection, so $x$ and $y$ commute.
\end{proof}

\begin{corollary}\label{cor:res-finite-for-surf}
Let $(X,\Delta)$ be a log canonical pair of dimension 2. Suppose that
\begin{enumerate}
\item $\pi_1^{\rm orb}(X,\Delta)$ is residually finite, 
\item $(X,\Delta)$ is hereditarily rationally connected.
\end{enumerate} 
Then the group $\pi_1^{\rm orb}(X,\Delta)$ admits a normal abelian subgroup of rank at most $3$, of torsion-free rank at most $2$, and of index at most $7200$.
\end{corollary}

\begin{proof}
We follow the proof of Lemma~\ref{lem:res-finite} for $d=2$ until the end. Using also Remark \ref{rem:jordan-values}, it yields a normal abelian subgroup $G_N\triangleleft G$ of index at most $7200$, which satisfies exact sequences as in Equation \eqref{eq:amses}, for each $n\ge N$, that is
$$1\to H_n\to G_N\to A_n\to 1,$$
where we recall that $A_n$ is a finite abelian subgroup of the quotient $G/H_n$ acting on a rationally connected surface $Y_n$. As a finite abelian subgroup of the plane Cremona group ${\rm Bir}(\pp^2_{\cc})$, the group $A_n$ has one of the isomorphism types classified by Blanc in \cite[Theorem 6]{Bla07}. In particular, either it has rank at most $2$, or it is isomorphic to one of the following groups of rank 3 and 4:
$\zz/2m\zz\times (\zz/2\zz)^2$ for $m\ge 1$, $(\zz/4\zz)^2\times \zz/2\zz,$ $(\zz/3\zz)^3,$ or $(\zz/2\zz)^4$. 

Let us first assume that there are infinitely many values of $n$ such that the group $A_n$ has rank at most $3$.
Then, by Lemma \ref{lem:ab-group}, the group $G_N$ has rank at most $3$. We now prove that the torsion-free rank of $G_N$ is at most $2$ by contradiction: Assume that $\zz^3$ is a subgroup of $G_N$. Since $G_N$ is infinite and the decreasing sequence $(H_n)$ has trivial intersection, the sequence of indices of the $H_n$ in $G$ must grow unbounded. Thus, the sequence of the orders of the groups $A_n$ grow unbounded too. Hence, up to extracting an appropriate subsequence, each $A_n$ is of rank at most 2, or each $A_n$ is isomorphic to $\zz/2f(n)\zz\times (\zz/2\zz)^2$ for some increasing function $f:\nn\to\nn$. In the first case, Lemma \ref{lem:ab-group} shows that $G_N$ has rank at most $2$, hence torsion-free rank at most $2$. In the second case, Lemma \ref{lem:ab-group} shows that $G_N$ has rank $3$, hence $G_N\cong \zz^3$. In particular, the kernel of the quotient map $G_n\to (\zz/2\zz)^2$ induced by all the $A_n$ is a non-trivial subgroup of $G_N$ that is contained in all the $H_n$, a contradiction. 

\medskip

Let us now assume the converse, i.e., that for every $n$ large enough, the group $A_n$ is isomorphic to $(\zz/2\zz)^4$. Fix a large enough $n$. Then, by the classification of finite abelian subgroups of the plane Cremona group \cite{Bla07}, we can embed the group $G/H_n$, which contains $A_n\simeq (\zz/2\zz)^4$, into the automorphism group of either $\pp^1\times \pp^1$, or a smooth del Pezzo surface of degree $4$. 

In the second case, it turns out that $G/H_n$ is of order at most $160$ by \cite[Theorem 1.1]{Hos96}, hence since $\bigcap H_n =\{1\}$, the group $G$ contains the trivial subgroup, that is abelian of rank $0$, with index at most $160$.

In the first case, we can follow the proof of \cite[Propositions 3.2 and 3.3 (Case $d=8$)]{Yas19}, use that $(\zz/2\zz)^4$ is a normal subgroup in $G/H_n$ and that $(\zz/2\zz)^2$ cannot be normally embedded into $\mathfrak{A}_5$ to show that either $G/H_n$ is of order at most $1440= 24\cdot 60$, or $G/H_n$ contains a normal abelian subgroup $B_n$ of index at most $120$. If $G/H_n$ has order at most $1440$ for infinitely many $n$, then $G$ contains the trivial subgroup, that is abelian of rank $0$, with index at most $1440$. Otherwise, for every $n$ large enough, $G/H_n$ contains a normal abelian subgroup $B_n$ of index at most $120$. Either $B_n$ has rank at most $3$ for infinitely many $n$, in which case we just reset $A_n := B_n$ and conclude as above when we assumed that $A_n$ had rank at most $3$, or $B_n$ is isomorphic to $(\zz/2\zz)^4$ for every $n$ large enough, and in that case $G$ contains the trivial subgroup, that is abelian of rank $0$, with index at most $120\cdot 2^4 = 1920$.
\end{proof}

We now prove a simple lemma.

\begin{lemma}\label{lem:fano-herratcon}
    Let $(X,\Delta)$ be a dlt Fano pair of dimension $n$. Suppose that $\pi_1^{\rm orb}(X,\Delta)$ is residually finite.
    Then, $(X,\Delta)$ is hereditarily rationally connected.
\end{lemma}

\begin{proof}
Using the fact that dlt pairs have dlt small $\qq$-factorializations \cite[Corollary 1.37]{Kol13}, and that these do not alter orbifold fundamental groups by Lemma \ref{lem:Qfact-mod-vs-fun}, we may assume that $X$ itself is $\qq$-factorial.

Let $p:(Y,\Delta_Y)\to (X,\Delta)$ be a compatible finite Galois cover. Let $m$ denote the degree of $p$. Then, for any small enough positive $\varepsilon$, the pair $(X,\Delta - \varepsilon\lfloor \Delta\rfloor)$ is a klt Fano pair, and since $m\varepsilon < 1$, the divisor
$$B_Y := p^*(\Delta - \varepsilon\lfloor \Delta\rfloor) - {\rm Ram}(p)$$
is effective. By \cite[Proposition 5.20]{KM98}, the pair $(Y,B_{Y})$ is thus klt; It also is a Fano pair.
By \cite{Zha06}, \cite{HM07}, $Y$ is rationally connected, thus $(X,\Delta)$ is indeed hereditarily rationally connected.
\end{proof}

We conclude this section with the proof of Theorem~\ref{introthm:Jordan-dlt-Fano}.

\begin{proof}[Proof of Theorem~\ref{introthm:Jordan-dlt-Fano}]
It follows from Lemmas \ref{lem:fano-herratcon} and \ref{lem:res-finite}.
\end{proof}

\section{Classifying Fano and Calabi--Yau curve pairs}\label{section:trichotomy}

In this section, we classify pairs $(C,\Delta)$, where $C$ is a smooth curve, $\Gamma$ has standard coefficients, and the pair $(C,\Delta)$ is Fano or Calabi--Yau.

\begin{definition}\label{def:trichotomy}
    Let $(C,\Delta)$ be a Fano or Calabi--Yau curve pair. We say that $(C,\Delta)$ is of:
    \begin{enumerate}
    \item {\it elliptic type} if $(C,\Delta^{\rm st})$ is a klt Calabi--Yau pair;
    \item {\it toric type} if there is a non-klt $2$-complement $(C,\Gamma)$ of $(C,\Delta^{\rm st})$;
    \item {\it sporadic type} if $(C,\Delta^{\rm st})$ is of the form $$\left(\pp^1,\frac{1}{2}\{0\}+\frac{2}{3}\{1\}+\frac{n-1}{n}\{\infty\}\right),$$
    for some $n\in\{3,4,5\}$.
    \end{enumerate}
\end{definition}

\begin{lemma}
    Let $(C,\Delta)$ be a Fano or Calabi--Yau curve pair. Then $(C,\Delta)$ is of exactly one of the three types listed in Definition \ref{def:trichotomy}.
\end{lemma}

\begin{proof}
    These three types are clearly mutually exclusive. Let $(C,\Delta)$ be a Fano or Calabi--Yau curve pair. Assume that it is neither of elliptic, nor of toric type. In particular, $C$ is then isomorphic to $\pp^1$. 

    Note that any pair $(\pp^1,D)$ with standard coefficients such that $D$ is supported on two points or less has a non-klt $1$-complement, thus is of toric type. So $\Delta^{\rm st}$ is supported on at least three points.

    If $\Delta^{\rm st}$ is supported on four or more points, then, as the smallest non-zero standard number is $\frac{1}{2}$, we have 
    $$(C,\Delta^{\rm st})\cong\left(\pp^1,\frac{1}{2}\{0\}+\frac{1}{2}\{1\}+\frac{1}{2}\{t\}+\frac{1}{2}\{\infty\}\right),$$
    for some $t\in\cc^*$. This pair is of elliptic type, a contradiction.

    So $\Delta^{\rm st}$ is supported on exactly three points, say $0,1,\infty\in\pp^1$ (in increasing coefficients order). 
    
    If both $0$ and $1$ have the smallest non-zero standard coefficient $\frac{1}{2}$, the pair $(C,\Delta^{\rm st})$ has a non-klt $2$-complement $(C,\Gamma)$ with
    $$\Gamma := \frac{1}{2}\{0\}+\frac{1}{2}\{1\}+\{\infty\},$$ hence is of toric type, a contradiction.

    If $0$ has coefficient $\frac{1}{2}$ and $1$ has a larger coefficient $\frac{k-1}{k}$ with $k\ge 3$, then $\infty$ must have coefficient $\frac{n-1}{n}$ for some $k\le n\le 6$. Since $(C,\Delta^{\rm st})$ is klt and not of elliptic type, it must be a Fano pair. Hence, $k=3$ and $3\le n\le 5$, i.e. this pair is of sporadic type, which concludes the proof.
\end{proof}

We now describe the orbifold fundamental groups of these three types of curve pairs.

\begin{proposition}\label{prop:curvepair}
    Let $(C,\Delta)$ be a Fano or Calabi--Yau curve pair. Then the orbifold fundamental group $\pi_1^{\rm orb}(C,\Delta)$ admits a characteristic abelian subgroup of rank at most $2$ and of index at most $60$.
\end{proposition}

The proof of this proposition relies on three lemmas.

\begin{lemma}\label{lem:ellpair}
    Let $(C,\Delta)$ be a pair of elliptic type. Then $\pi_1^{\rm orb}(C,\Delta)$ admits an abelian characteristic subgroup of index at most $6$ that is isomorphic to $\mathbb{Z}^2$.
\end{lemma}

\begin{lemma}\label{lem:toricpair}
    Let $(C,\Delta)$ be a pair of toric type. Then $\pi_1^{\rm orb}(C,\Delta)$ is isomorphic to $(\zz/2\zz)^2$, or it admits a cyclic characteristic subgroup of index at most $2$. 
    Moreover, the group is infinite if and only if the pair is Calabi--Yau.
\end{lemma}

\begin{lemma}\label{lem:sporpair}
    Let $(C,\Delta)$ be a pair of sporadic type. Then $\pi_1^{\rm orb}(C,\Delta)$ is one of the finite groups $\mathfrak{A}_4,\mathfrak{S}_4$, $\mathfrak{A}_5$.
\end{lemma}

We first prove these three lemmas.

\begin{proof}[Proof of Lemma \ref{lem:ellpair}]
By definition of the elliptic type, the pair $(C,\Delta)$ is klt, Calabi-Yau, and has standard coefficients. Listing the possibilities by hand, it is easy to see that $(C,\Delta)$ is an $N$-complement for some $N\in\{2,3,4,6\}$. In particular, by Lemma \ref{lem:index-one-cover}, it admits a compatible finite cyclic (index one) cover $p:E\to (C,\Delta)$, where $E$ is an elliptic curve and the degree of $p$ equals $N$.
By Proposition \ref{prop:galois1}, this shows that $\pi_1^{\rm orb}(C,\Delta)$ contains a normal subgroup isomorphic to $\pi_1(E)\simeq\zz^2$ of index $N$.

To show that this subgroup is characteristic, we prove that it is the only normal subgroup isomorphic to $\zz^2$ and of index $N$ in $\pi_1^{\rm orb}(C,\Delta)$. By Proposition \ref{prop:galois2}, it suffices to show that any compatible finite cyclic cover $q:(B,\Delta_B)\to (C,\Delta_C)$ of degree $N$ with $\pi_1^{\rm orb}(B,\Delta_B)\simeq\zz^2$ is exactly $p$. Fix such a cover $q$, and consider the klt Calabi--Yau pair $(B,\Delta_B)$ with standard coefficients. Note that $\zz^2$ has no element of finite order, thus $\Delta_B=0$ and $B$ is an elliptic curve. Thus, $q$ is the index one cover of $(C,\Delta_C)$. It therefore coincides with $p$.
\end{proof}

\begin{proof}[Proof of Lemma \ref{lem:toricpair}]
Let $(C,\Gamma)$ be a non-klt $2$-complement of $(C,\Delta^{\rm st})$. 
We first show that $\pi_1^{\rm orb}(C,\Gamma)$ is isomorphic to $\zz$ or to ${\rm Dih}:=\zz\rtimes\zz/2\zz$. 
Indeed, $(C,\Gamma)$ is either 
\begin{itemize}
    \item the pair $(\pp^1,\{0\}+\{\infty\})$, with orbifold fundamental group
$$\pi_1^{\rm orb}(\pp^1,\{0\}+\{\infty\}) = \pi_1(\cc^*)\simeq \mathbb{Z},$$
\item or the pair $(\mathbb{P}^1,\frac{1}{2}\{0\}+\frac{1}{2}\{1\}+\{\infty\})$, with orbifold fundamental group
$$\pi_1\left(\mathbb{P}^1,\frac{1}{2}\{0\}+\frac{1}{2}\{1\}+\{\infty\}\right)=\langle \gamma_0,\gamma_1\mid (\gamma_0)^2=(\gamma_1)^2=1\rangle\simeq {\rm Dih}.$$
\end{itemize}

Hence, by Lemma \ref{lem:fun-group-increasing-coeff}, the orbifold fundamental group $\pi_1^{\rm orb}(C,\Delta)$ is a quotient of $\zz$ or of $\zz\rtimes \zz/2\zz$. In the first case, it is cyclic, as wished. In the second case, it is isomorphic to ${\rm Dih}$, to a finite dihedral group ${\rm Dih}_m:=\zz/m\zz\rtimes \zz/2\zz$ for some $m\ge 3$, or to $(\zz/2\zz)^2$. The result is clear from here.
\end{proof}

\begin{proof}[Proof of Lemma \ref{lem:sporpair}]
We consider the unique faithful projective linear representations of
\begin{itemize}
    \item $\mathfrak{A}_4$ in ${\rm PGL}_2(\cc)$, inducing a quotient map $p:\pp^1\to \pp^1$. Its branching divisor is
    $${\rm Branch}(p)\cong\frac{1}{2}\{0\}+\frac{2}{3}\{1\}+\frac{2}{3}\{\infty\}.$$
    \item $\mathfrak{S}_4$ in ${\rm PGL}_2(\cc)$, with a quotient map $p:\pp^1\to \pp^1$. It has branching divisor
    $${\rm Branch}(p)\cong\frac{1}{2}\{0\}+\frac{2}{3}\{1\}+\frac{3}{4}\{\infty\}.$$
    \item $\mathfrak{A}_5$ in ${\rm PGL}_2(\cc)$, yielding a quotient map $p:\pp^1\to \pp^1$ with branching divisor
    $${\rm Branch}(p)\cong\frac{1}{2}\{0\}+\frac{2}{3}\{1\}+\frac{4}{5}\{\infty\}.$$
\end{itemize}
 (Here, we use the symbol $\cong$ to denote equality up to pushforward by an automorphism of $\pp^1$.) In each case, $p$ is compatible with the pairs $(\pp^1,0)$ and $(\pp^1,{\rm Branch}(p))$, and we use Proposition \ref{prop:galois1} (Galois correspondence) to conclude.
\end{proof}

\section{Log Calabi--Yau surfaces admitting Mori fiber spaces to curves}
\label{sec:mfs}

In this section, we study orbifold fundamental groups of lc Calabi--Yau surface pairs with the structure of a Mori fiber space onto a curve. Let us first fix some conventions and definitions.

\begin{definition}
    Let $X$ be a normal projective surface. We say that it admits a {\it Mori fiber space onto a curve} if the following requirements hold
    \begin{itemize}
        \item $X$ is $\qq$-factorial with lc singularities;
        \item there is a fibration $f:X\to C$, where $C$ is a smooth curve;
        \item the general fibre of $f$ is isomorphic to $\pp^1$, and $\rho(X)=2$.
    \end{itemize}
\end{definition}

\begin{definition}\label{def:heisenberg}
For $k\in\mathbb{Z}_{\ge 0}$, we define the $k$-th Heisenberg group as
    \[
    H_k:=\langle a,b,c\mid [a,b]=[a,c]=1,[b,c]=a^k\rangle.
    \]
    It is a nilpotent group of length 2 and rank 3, and virtually abelian if and only if $k=0$.
\end{definition}

\begin{remark}
    The set of $k$-th Heisenberg groups for $k\in\mathbb{Z}_{\ge 0}$ coincides with the set of groups $H$ realized as a central extension of the form
    $$1\to \zz\to H\to \zz^2.$$
\end{remark}

The main result of this section is the following.

\begin{proposition}\label{prop:mfs}
Let $(X,\Delta)$ be a log canonical Calabi--Yau pair where $X$ is a surface and a Mori fiber space onto a curve. Then the group $\pi_1^{\rm orb}(X,\Delta)$ admits a normal subgroup of index at most $7200$ that is a quotient of the nilpotent group $H_k$ for some $k\ge 1$, or an abelian group of rank $r\le 4$. Moreover, we can ensure that $r\le 3$, unless possibly when $(X,\Delta)$ is a klt Calabi--Yau pair with standard coefficients. Furthermore, if the group $\pi_1^{\rm orb}(X,\Delta)$ is not virtually abelian, then there is a compatible finite cover $(Y,\Delta_Y)$ of $(X,\Delta)$ of degree at most $96$ such that 
$$(Y,\Delta_Y)\simeq(\mathbb{P}(\mathcal{O}_E\oplus L),s_0+s_{\infty}),$$
where $E$ is an elliptic curve, $L$ is an ample line bundle on $E$, and $s_0,s_{\infty}$ are the two sections corresponding to the two factors.
\end{proposition}

In Example~\ref{ex:circle-over-elliptic}, it is explained how the group $H_k$ naturally appears in this set-up.

\medskip

Throughout this section, we denote by $(X,\Delta)$ a log Calabi-Yau surface and by $f:X\to C$ a Mori fiber space onto a smooth curve. We denote by $F$ the general fiber of $f$. By Lemma \ref{lem:mfscompatible} and Remark \ref{rem:mfscompatible}, there is an lc Fano or Calabi--Yau curve pair $(C,\Delta_C)$ that makes $f:(X,\Delta)\to(C,\Delta_C)$ a compatible map.
We make the following observation, which should be considered a corollary of Lemma~\ref{lem:res-finite-from-exact} and Proposition \ref{prop:curvepair}.

\begin{proposition}\label{prop:res-finite-mfs}
Let $(X,\Delta)$ be an lc Calabi-Yau pair where $X$ is a surface and a Mori fiber space onto a curve. Then the group $\pi_1^{\rm orb}(X,\Delta)$ is residually finite.
\end{proposition}

\begin{proof}
By Lemma~\ref{lem:nori}, 
we have an exact sequence
$$\pi_1^{\rm orb}(F,\Delta|_F) \rightarrow 
\pi_1^{\rm orb}(X,\Delta) \rightarrow 
\pi_1^{\rm orb}(C,\Delta_C)\rightarrow 1,$$
which only features finitely generated groups.
We want to conclude by Lemma~\ref{lem:res-finite-from-exact}. For that, it suffices to show that both groups $\pi_1^{\rm orb}(F,\Delta|_F)$ and $\pi_1^{\rm orb}(C,\Delta_C)$ are virtually abelian. Since the pairs $(F,\Delta|_F)$ and $(C,\Delta_C)$ are log canonical Fano or Calabi--Yau curve pairs, this just follows from Proposition~\ref{prop:curvepair}.
\end{proof}

We use the following birational trick.

\begin{lemma}\label{lem:sarkisovlink}
Let $(X,\Delta)$ be an lc Calabi-Yau pair where $X$ is a surface and a Mori fiber space onto a curve $C$. Then there exists a birational modification $(X',\Delta')$ of $(X,\Delta)$ relative to the base curve $C$ such that
\begin{itemize}
    \item All log canonical centers of $(X',\Delta')$ are contained in $\lfloor\Delta'\rfloor$;
    \item The fibration $f':X'\to C$ induced by $f:X\to C$ remains a Mori fiber space;
    \item The pair $(X',\Delta')$ remains an lc Calabi--Yau pair;
    \item We have a surjective homomorphism $\pi_1^{\rm orb}(X',\Delta')\twoheadrightarrow\pi_1^{\rm orb}(X,\Delta)$.
\end{itemize}
\end{lemma}

\begin{proof}
We consider the finite set $\{p_1,\ldots,p_k\}$ of points $p\in C$ such that $f^{-1}(p)\setminus f^{-1}(p)\cap \lfloor\Delta\rfloor$ contains a log canonical center of the pair $(X,\Delta)$.
We now perform the elementary transformations of the ruled surface $X$ centered at the points $p_1,\ldots,p_k$, i.e., apply a composition of $k$ Sarkisov links of type II, i.e., blow-up each point $p_i$ to obtain a surface pair $(Z,\Delta_Z)$, and then contract the strict transform $F'_i$ of each fiber $f^{-1}(p_i)$ to get a surface pair $(X',\Delta')$. We have a commutative diagram

$$\xymatrix{
& (Z,\Delta_Z)\ar[dl]_{\varepsilon} \ar[dr]^{\varepsilon'} &\\
(X,\Delta) \ar[d]^{f} & &(X',\Delta')\ar[d]^{f'} \\
(C,\Delta_{C}) & & (C,\Delta'_C)
}$$
where $f':X'\to C$ clearly remains a Mori fiber space and $(X',\Delta')$ remains an lc Calabi--Yau pair. By Lemmas \ref{lem:dlt-mod-vs-fun} and \ref{lem:fun-under-surj}, we have a surjective homomorphism
$$\pi_1^{\rm orb}(X',\Delta')\twoheadrightarrow \pi_1^{\rm orb}(Z,\Delta_Z)\simeq\pi_1^{\rm orb}(X,\Delta).$$
We note that all log canonical centers of $(Z,\Delta_Z)$ are contained in $\lfloor\Delta_Z\rfloor\cup \bigcup_{i} F'_i.$
Since $\lfloor\Delta_Z\rfloor$ contains the exceptional divisors $E_i$ above the points $p_i$, and each $E_i$ intersects the corresponding $F'_i$, the log canonical centers of $(X',\Delta')$ are contained in $\lfloor \Delta'\rfloor\cup \bigcup_{i} \varepsilon'(F'_i)=\lfloor\Delta'\rfloor$.
\end{proof}

\subsection{Hereditarily uniruled pairs}
The following definition is rather {\it ad hoc}, but proves convenient. We recall that a normal projective variety $X$ is called {\it uniruled} if there exist a normal projective variety $Z$ and a dominant rational map $r:Z\times\pp^1\dashrightarrow X$.

\begin{definition}
    A log canonical pair $(X,\Delta)$ is said to be {\it hereditarily uniruled} if for any compatible finite Galois cover $p:(X',\Delta')\to (X,\Delta)$, the normal projective variety $X'$ is uniruled.
\end{definition}

Let us now focus on the geometry of certain Mori fiber space pairs that are hereditarily uniruled.

The following lemma is a classical result. It summarizes some of the statements made throughout \cite[Part V.2]{Har77}.

\begin{lemma}\label{lem:p1bdle-antican-system}
   Let $\phi: Y\to E$ be a smooth $\pp^1$-bundle over an elliptic curve with general fiber $F$. Then, $(-K_Y)^2 = 0$ and
the Mori cone $\overline{NE}(Y)$ is 
\begin{itemize}
\item either spanned by $-K_Y$ and $F$;
\item  or spanned by $B$ and $F$, where $B$ is a section of $\phi$ with negative self-intersection.\\ Moreover in this case, any Calabi--Yau pair $(Y,\Delta_Y)$ satisfies $\Delta_Y\ge B$.
\end{itemize}
\end{lemma}

We are now well equipped to prove the following statement.

\begin{proposition}\label{prop:ellipticbase}
Let $(X,\Delta)$ be an lc Calabi-Yau surface pair whose log canonical centers are all contained in $\lfloor\Delta\rfloor$, which is hereditarily uniruled, and which admits an equimultiple Mori fiber space $f:(X,\Delta)\to (C,\Delta_C)$ where the curve pair $(C,\Delta_C)$ is of elliptic type.
Then the orbifold fundamental group $\pi_1^{\rm orb}(X,\Delta)$ admits a normal subgroup of index at most $3456$ that is abelian of rank at most $2$, or a normal subgroup of index at most $96$ that is a quotient of the nilpotent group $H_k$ for some $k\ge 0$. 
Moreover, if the group $\pi_1^{\rm orb}(X,\Delta)$ is not virtually abelian, then there is a compatible finite cover $(Y,\Delta_Y)$ of $(X,\Delta)$ of index at most 96 such that 
$$(Y,\Delta_Y)\simeq(\mathbb{P}(\mathcal{O}_E\oplus L),s_0+s_{\infty}),$$
where $E$ is an elliptic curve, $L$ is an ample line bundle on $E$, and $s_0,s_{\infty}$ are the two sections corresponding to the two factors.
Furthermore, if the group $\pi_1^{\rm orb}(X,\Delta)$ contains a copy of $\zz^3$, then there is a compatible finite cover $(Y',\Delta_{Y'})$ of $(X,\Delta)$ such that 
$$(Y',\Delta_{Y'})\simeq(\mathbb{P}(\mathcal{O}_E\oplus M),s_0+s_{\infty}),$$
where $E$ is an elliptic curve, $M$ is a line bundle of degree zero on $E$, and $s_0,s_{\infty}$ are two sections of this $\pp^1$-bundle.
\end{proposition}

\begin{proof}
 Note that, since $(C,\Delta_C)$ is of elliptic type, we have $\lfloor \Delta_C\rfloor = 0$ and $K_C+\Delta_C\equiv 0$. The vertical part of our divisor thus satifies $\lfloor \Delta_{\rm vert}\rfloor = 0$. Hence and by our assumption on log canonical centers, $(X,\Delta_{\rm vert})$ is a klt pair. 
 By Remark \ref{rem:mfscompatible}, the divisor $\Delta_{\rm vert}^{\rm st}$ is linearly equivalent to $L:= -f^*K_C-{\rm Ram}(f)$, which is an integral Weil divisor, thus Cartier on $X_{\rm reg}$. 
By Lemma \ref{lem:ellpair} and its proof, we have an integer $N\in\{1,2,3,4,6\}$ such that the pair $(C,\Delta_C)$ is an $N$-complement. Using Lemma \ref{lem:index-one-cover}, we build a compatible finite cyclic cover $q:E\to (C,\Delta_C)$ of degree $N$ where $E$ is a smooth elliptic curve. Let $Y$ denote the normalization of the surface obtained by fibre product as follows
$$\xymatrix{
Y \ar[r]^{p} \ar[d]_{\phi} & X\ar[d]^{f} \\
(E,0) \ar[r]_{q} & (C,\Delta_C).
}$$
Clearly, $p$ remains a finite cyclic cover of degree $N$ and $\phi$ remains a Mori fiber space. By the compatibility of $q$, we note that $\phi$ has no multiple fibers, thus by \cite[Theorem II.2.8]{Kol96}, $\phi: Y\to E$ is a smooth $\pp^1$-bundle. We then easily check that 
$${\rm Ram}(p) =\phi^*{\rm Ram}(q)=\phi^*q^*\Delta_C=p^*\Delta_{\rm vert}^{\rm st}.$$
Hence, the finite cover $p$ is compatible with the pairs $(Y,\Delta_Y)$ and $(X,\Delta)$ where $\Delta_Y$ denotes the effective divisor $p^*(\Delta - \Delta_{\rm vert}^{\rm st})$.
The facts that $p$ is compatible and that the higher homotopy group $\pi_2(E)$ is trivial allow to refine Lemma \ref{lem:nori}. We have a short exact sequence

\begin{equation}\label{eq:specialnori}
    1\to \pi_1^{\rm orb}(F,\Delta|_F)\to \pi_1^{\rm orb}(X,\Delta)\to \pi_1^{\rm orb}(C,\Delta_C)\to 1
\end{equation}

\noindent Writing down the canonical bundle formula for $\phi$, we note that the discriminant part $B_E$ is trivial. Thus, the divisor $\Delta_Y$ has no vertical components and the components of $\Delta_Y^{\rm st}$ are disjoint.

\bigskip

One case easily ruled out is the following: Assume that the induced pair $(F,\Delta|_F)$ on the general fiber of $f$ is of elliptic type. Then the coefficients of the divisor $\Delta_Y$, which has no vertical components, are contained in $\{\frac{1}{2},\frac{2}{3},\frac{3}{4},\frac{5}{6}\}$, i.e., $\Delta_Y$ has standard coefficients smaller than one. Since the log canonical centers of $(Y,\Delta_Y)$ are all contained in $\lfloor\Delta_Y\rfloor = 0$, this pair is thus klt. By the non-vanishing result \cite[V.4.9 Corollary]{Nak04}, there is a positive integer $M$ such that $M(K_Y+\Delta_Y)\sim 0$, hence the index one cover of $(Y,\Delta_Y)$ is a klt Gorenstein surface $Z$ with $K_Z\sim 0$. In particular, $Z$ is not uniruled, and that contradicts the hereditary uniruledness of $(X,\Delta)$.

\bigskip

We consider the next case: Assume that the pair $(F,\Delta|_F)$ is of sporadic type, i.e., 
$$(F,\Delta^{\rm st}|_F)\cong \left(\pp^1,\frac{1}{2}\{0\}+\frac{2}{3}\{1\}+\frac{n-1}{n}\{\infty\}\right)$$ 
with $n\in\{3,4,5\}$. One of the following situations then occurs
\begin{enumerate}[label=\alph*)]
    \item either $\Delta^{\rm st} = \frac{1}{2}s_0+\frac{2}{3}s_1+\frac{n-1}{n}s_{\infty}$ where $s_0, s_1, s_{\infty}$ are sections of $f$;
    \item or $\Delta^{\rm st} = \frac{1}{2}s + \frac{2}{3}b$ where $s_0$ is a section and $b$ a bisection of $f$.
\end{enumerate}

In Case a), we note that the Mori fiber space $f$ has no multiple fiber, hence is a smooth $\pp^1$-bundle over $C$. It has three disjoint sections, thus is trivial
$$(X,\Delta^{\rm st})\cong (F,\Delta^{\rm st}|_F)\times (C,\Delta_C),$$
and therefore $\pi_1^{\rm orb}(X,\Delta)$ has a normal abelian subgroup of index at most $360$ isomorphic to $\zz^2$, as wished.

In Case b), we let $s_{Y}$ and $b_Y$ be the preimages of $s$ and $b$ by the finite cyclic cover $p:Y\to X$. By Lemma \ref{lem:p1bdle-antican-system} and since $\Delta_Y$ has no component of coefficient one, the sections $2s_Y$ and $b_Y$ belong to the basepoint free pencil $|-K_Y|$. Hence, we have an elliptic fibration
$$\psi:(Y,\Delta_Y)\longrightarrow \left(\pp^1,\frac{3}{4}\{\psi(s_Y)\}+\frac{2}{3}\{\psi(b_Y)\}\right)$$
induced by $|-K_Y|$ which is equimultiple and makes the divisor $\Delta_Y^{\rm st}$ be purely vertical. Since the base curve pair has trivial orbifold fundamental group, by Lemma \ref{lem:nori}, $\pi_1^{\rm orb}(Y,\Delta_Y)$ is an abelian group of rank at most $2$. It is a normal subgroup of index at most $6$ in $\pi_1^{\rm orb}(X,\Delta)$, as wished.

\bigskip

We consider the last case: Assume that the pair $(F,\Delta|_F)$ is of toric type. We break it further down.

\medskip

If the group $\pi_1^{\rm orb}(F,\Delta|_F)$ is trivial, then Exact Sequence \eqref{eq:specialnori} concludes. 

If the group $\pi_1^{\rm orb}(F,\Delta|_F)$ is isomorphic to $(\zz/2\zz)^2$, then by Exact Sequence \eqref{eq:specialnori} and by Lemma \ref{lem:exact-seq}, we have a normal subgroup $H$ of index at most $216$ in $\pi_1^{\rm orb}(X,\Delta)$ that sits in a central extension
$$1\to (\zz/2\zz)^2\to H\to \zz^2\to 1.$$
Let $b,c$ be the preimages in $H$ of a set of two generators of $\zz^2$. If they commute, then $H\simeq(\zz/2\zz)^2\times\zz^2$ has a characteristic subgroup of index $4$ isomorphic to $\zz^2$. If they do not, it is easy to check that their squares $b^2$ and $c^2$ commute. Thus, the subgroup spanned by $b^2$ and $c^2$ is isomorphic to $\zz^2$ and characteristic of index $16$ in $H$. Either way, we constructed a normal subgroup isomorphic to $\zz^2$ in $G$ of index at most $3456$.

\medskip

If the group $\pi_1^{\rm orb}(F,\Delta|_F)$ is neither trivial nor $(\zz/2\zz)^2$, one of the following holds
\begin{enumerate}[label=\alph*)]
    \item either $\Delta_{\rm hor}^{\rm st}= a_0 s_0 + a_{\infty} s_{\infty}$ with $a_0,a_{\infty}$ positive standard coefficients and $s_0,s_{\infty}$ sections of $f$;
    \item or $\Delta_{\rm hor}^{\rm st}= a b$ with $a$ a positive standard coefficient and $b$ a bisection of $f$;
    \item or $\Delta_{\rm hor}^{\rm st} = \frac{1}{2}s_0+\frac{1}{2}s_1+a_{\infty} s_{\infty}$ with $a_{\infty}$ a positive standard coefficient and $s_0,s_1,s_{\infty}$ sections of $f$;
    \item or $\Delta_{\rm hor}^{\rm st} = \frac{1}{2}b+a_{\infty} s_{\infty}$ with $a_{\infty}$ a positive standard coefficients, $s_{\infty}$ a section and $b$ a bisection of $f$.
\end{enumerate}
In any case, we can increase coefficients to provide a divisor $\Gamma_{\rm hor}\ge \Delta_{\rm hor}^{\rm st}$ with coefficients in $\{\frac{1}{2},1\}$ such that the pair $(F,\Gamma_{\rm hor}|_F)$ has orbifold fundamental group $\zz$ or $\zz\rtimes \zz/2\zz$. 
We set $\Gamma:=\Gamma_{\rm hor}+\Delta_{\rm vert}$.
By Corollary \ref{cor:2cplt-sur}, it suffices to show the conclusion of Proposition \ref{prop:ellipticbase} for $\pi_1^{\rm orb}(X,\Gamma)$ to derive it for $\pi_1^{\rm orb}(X,\Delta)$.
We also have an exact sequence
\begin{equation}\label{eq:specialnorigamma}
1\to \pi_1^{\rm orb}(F,\Gamma|_F)\to \pi_1^{\rm orb}(X,\Gamma)\to \pi_1^{\rm orb}(C,\Delta_C)\to 1,
\end{equation}
where injectivity follows as for Exact Sequence \eqref{eq:specialnori} and we use the fact that $\Gamma_C=\Delta_C$. 

\medskip

Therefore, in Cases a) and b), we have $\pi_1^{\rm orb}(F,\Gamma|_F)\simeq \zz$. Lemma \ref{lem:exact-seq} applies and provides a normal subgroup $H$ of index at most $24$ in $\pi_1^{\rm orb}(X,\Gamma)$ that is a central extension of $\zz^2$ by $\zz$, i.e., a Heisenberg group $H_k$ for some $k\ge 0$, as wished.

\medskip

Finally, in Cases c) and d), we have $\pi_1^{\rm orb}(F,\Gamma|_F)\simeq \zz\rtimes\zz/2\zz$. This group thus has a characteristic subgroup isomorphic to $\zz$, which is normal in $\pi_1^{\rm orb}(X,\Gamma)$ and provides two exact sequences
\begin{gather}
    \begin{aligned}
     1\to \zz\to \pi_1^{\rm orb}(X,\Gamma)\to Q\to 1 \quad \mbox{and}\\
     1\to \zz/2\zz \to Q \to \pi_1^{\rm orb}(C,\Delta_C)\to 1.
    \end{aligned}
\end{gather}
We claim that $Q$ has a normal abelian subgroup of index at most $48$ isomorphic to $\zz^2$.
Indeed, by Lemma \ref{lem:exact-seq}, we have a normal subgroup $N$ of index at most $6$ in $Q$ that sits in a central extension
$$1\to \zz/2\zz\to N\to \zz^2\to 1.$$
Let $b,c$ be the preimages in $N$ of a set of two generators of $\zz^2$. If they commute, then $N$ is isomorphic to $\zz/2\zz\times \zz^2$. Its characteristic subgroup $\zz^2$ is thus a normal subgroup of index at most $12$ in $Q$, as wished. If $b$ and $c$ do not commute, it is easy to check that their squares $b^2$ and $c^2$ commute. Thus, the subgroup spanned by $b^2$ and $c^2$ is characteristic, isomorphic to $\zz^2$, and of index at most $8$ in $N$. This provides a normal subgroup of index at most $48$ in $Q$, and concludes the proof of our claim. 
We can now apply Lemma \ref{lem:exact-seq} to get that $\pi_1^{\rm orb}(X,\Gamma)$ has a normal subgroup of index at most $96$ that is a central extension of $\zz^2$ by $\zz$, as wished.

\medskip

To conclude this proof, we want to investigate the situations that lead to an orbifold fundamental group $\pi_1^{\rm orb}(X,\Delta)$ that is not virtually abelian, or that contains a copy of $\zz^3$. These are, by the previous discussion, cases where the induced curve pair on the general fiber $(F,\Delta|_F)$ is of toric type, and one of the Cases a), b), c), or d) above occurs. 

Assume for a moment that the surjection induced by the coefficient increase
$\pi_1^{\rm orb}(F,\Gamma|_F)\twoheadrightarrow \pi_1^{\rm orb}(F,\Delta|_F)$
has a non-trivial kernel. The Heisenberg group $H_k$ occuring as a  normal finite index subgroup of
$\pi_1^{\rm orb}(X,\Gamma)$ then surjects onto a normal finite index subgroup of $\pi_1^{\rm orb}(X,\Delta)$ with finite presentation
$$P = \langle a,b,c\mid a^n = [a,b]=[a,c]=1,[b,c]=a^k\rangle,$$
where $n\ge 2$ is the order, respectively half the order, of $\pi_1^{\rm orb}(F,\Delta|_F)$ if it is Then, note that $\langle a, b^n, c^n\rangle  < P$ is a normal subgroup of index $n^2$ in $P$, and is isomorphic to $\zz/n\zz\times \zz^2$: This yields that $\pi_1^{\rm orb}(X,\Delta)$ is in fact virtually abelian and contains no copy of $\zz^3$, a contradiction to our premises.

We can now assume that $\pi_1^{\rm orb}(F,\Gamma|_F)\simeq \pi_1^{\rm orb}(F,\Delta|_F)$, i.e., the coefficients appearing in the afore-mentioned Cases a), b), c) and d) above all equal $1$. The canonical bundle formula for the fibration $\phi:(Y,\Delta_Y)\to (E,0)$ further shows that in Case a), the two sections $s_0$ and $s_{\infty}$ are disjoint, which shows that $(Y,\Delta_Y)$ is of the form $(\mathbb{P}(\mathcal{O}_E\oplus L),s_0+s_{\infty})$ for some line bundle $L$ on $E$ and some sections $s_0, s_{\infty}$ of the $\pp^1$-bundle structure. In the non virtually abelian situation, the computation of Example \ref{ex:p1-over-elliptic} shows that $L$ has degree $k\neq 0$, thus $s_0$ and $s_{\infty}$ are indeed the two disjoint sections stemming from the factors. In the situation with a copy of $\zz^3$, the computation of Example \ref{ex:p1-over-elliptic} shows that $L$ has degree zero, as wished.

In Case c), the canonical bundle formula shows that $s_{\infty}$ is disjoint from the other two sections, and that if the sections $s_0$ and $s_1$ intersect, this must be a transversal intersection. If $s_0$ and $s_1$ intersect, then we can perform elementary transformations of the Mori fiber space $\phi: Y\to E$ at all of their intersection points: After this birational equivalence, we can assume that the three sections $s_0, s_1,s_{\infty}$ are disjoint. This implies that $(Y,\Delta_Y)$ is isomorphic to a product of two Calabi--Yau curve pairs. In particular, it has a virtually abelian fundamental group. It is easy to check that this orbifold fundamental group contains a copy of $\zz^3$, and that (possibly after an additional index one double cover), the pair is in fact of the form $(\pp^1\times E, s_0+s_{\infty})$ that we wished.

Finally, in Cases b) and d), the bisection $b$ admits an isogeny of degree $2$ to $E$, thus a base change by multiplication by two on the elliptic curve $E$ induces a compatible Galois cover $r:(\tilde{Y},\tilde{\Delta})\to (Y,\Delta_Y)$ of degree $4$, where $r_{\bullet}\pi_1^{\rm orb}(\tilde{Y},\tilde{\Delta})$ is a characteristic subgroup of $\pi_1^{\rm orb}(Y,\Delta_Y)$, thus a normal subgroup of finite index of $\pi_1^{\rm orb}(X,\Delta)$. However, the pre-image $r^*b$ has become two sections of the Mori fiber space $\tilde{\phi}:\tilde{Y}\to E$, which reduces us to the previously treated Cases a) (the compatible cover having index at most $96$ now) and c) respectively. This concludes the proof.
\end{proof}

\subsection{Proof of Proposition \ref{prop:mfs}}
We can now prove Proposition \ref{prop:mfs}.

\begin{proof}[Proof of Proposition \ref{prop:mfs}]
We have our lc Calabi--Yau surface pair $(X,\Delta)$ with its equimultiple Mori fiber space onto a smooth curve
$f\colon (X,\Delta) \rightarrow (C,\Delta_C)$. By Lemma \ref{lem:sarkisovlink}, we assume without loss of generality that all log canonical centers of the pair $(X,\Delta)$ are contained in $\lfloor\Delta\rfloor$. In particular, the surface $X$ is klt, as well as any surface $Y$ arising from a compatible finite Galois cover $p:(Y,\Delta_Y)\to (X,\Delta)$. Indeed, this follows from \cite[Proposition 5.20]{KM98} applied to the compatible cover $(Y,0)\to (X,{\rm Branch}(p))$ induced by $p$.

By Proposition \ref{prop:res-finite-mfs}, the group $\pi_1^{\rm orb}(X,\Delta)$ is residually finite. Thus, if $(X,\Delta)$ is hereditarily rationally connected, Corollary~\ref{cor:res-finite-for-surf} concludes this proof.

Assume now that $(X,\Delta)$ is not hereditarily rationally connected, but is hereditarily uniruled. In other words, there is a compatible finite Galois cover $p\colon (Y,\Delta_Y)\rightarrow (X,\Delta)$ such that the normal projective surface $Y$ is uniruled and not rationally connected. Since $Y$ is a surface, its MRC fibration is a morphism
$\phi: Y\to E$ to a smooth curve $E$ that is not rational. Since $(Y,\Delta_Y)$ is a Calabi--Yau pair and by the canonical bundle formula, this is an elliptic curve. By uniqueness of the MRC fibration, the action of $G:={\rm Gal}(p)$ on $Y$ descends to an action on $E$, inducing a quotient map $q:E\to E/G=:C'$ and an equimultiple fibration $\psi: (X,\Delta)\to (C',{\rm Branch}(q))$ with rational fibers. Since $X$ is a Mori fiber space over the curve $C$, we have $\rho(X)=2$, and thus $X$ is a Mori fiber space over $C'$ as well. By the Riemann--Hurwitz formula, this shows that $(X,\Delta)$ is in fact a Mori fiber space over a curve $C'$ with a base pair $(C',{\rm Branch}(q))$ of elliptic type. Therefore, Proposition \ref{prop:ellipticbase} applies and concludes in this case.

Assume finally that $(X,\Delta)$ is not hereditarily uniruled. In other words, there is a compatible finite Galois cover $p\colon (Y,\Delta_Y)\rightarrow (X,\Delta)$ such that the normal projective surface $Y$ is not uniruled. Thus by \cite[Corollary 0.3]{BDPP13}, the minimal resolution $\tilde{Y}$ of $Y$ has a pseudoeffective canonical class. Since $Y$ is klt, this shows that $K_Y$ itself is a pseudoeffective class. However, the pair $(Y,\Delta_Y)$ is also Calabi--Yau, so $K_Y$ must be numerically trivial, and $\Delta_Y = 0$. Hence, we have $\Delta={\rm Branch}(p)$. The pair $(X,\Delta)$ is thus a klt Calabi--Yau pair with standard coefficients, and Theorem~\ref{introthm:fun-group-klt} concludes in this case. 
\end{proof}

\section{Log canonical Calabi-Yau surface pairs with standard coefficients}

The main result of this section, worth comparing to the main result of Section~\ref{sec:klt-cy-stand}, is the following.

\begin{proposition}\label{prop:lc-cy}
Let $(X,\Delta)$ be a non-klt log canonical Calabi--Yau surface with $\Delta=\Delta^{\rm st}$.
Then the group $\pi_1^{\rm orb}(X,\Delta)$ admits a normal subgroup of index at most $7200$ that is abelian of rank at most $3$, or a quotient of the nilpotent group $H_k$ for some $k\ge 1$. Moreover, if $\pi_1^{\rm orb}(X,\Delta)$ is not virtually abelian, then $(X,\Delta)$ is birationally equivalent to a log canonical Calabi--Yau pair on surface that is a Mori fiber space to a curve.
\end{proposition}


Before proving this proposition, we prove a few lemmas. First, we propose a technical result about $K_X$-MMP for certain dlt pairs $(X,\Delta)$.

\begin{lemma}\label{lem:MMP-and-pi1reg}
    Let $(X,\Delta)$ be a dlt pair of dimension 2 such that $K_X+\Delta\sim 0$ and $\Delta\ne 0$.
    Then, if we run a $K_X$-MMP on $X$, it terminates with a surface $X_0$ that has canonical singularities, sits in an induced pair $(X_0,\Delta_0)$ which is dlt, satisfies $K_{X_0}+\Delta_0\sim 0$ and $\Delta_0\neq 0$, and 
    $$\pi_1({X}_{{\rm reg}})\simeq \pi_1(X_{0,{\rm reg }}).$$
\end{lemma}

\begin{proof}
First, note that $X$ is a klt surface. Moreover, since $\Delta$ is integral, its supported is contained in the non-klt locus of the pair $(X,\Delta)$, hence by definition of dlt singularities, $X$ is smooth along the $\Delta$. In particular, the Weil divisor $\Delta$ is Cartier and thus $X$ is Gorenstein. Hence, $X$ has canonical singularities. 

To prove the rest of our statement, it suffices to argue by induction and check it for one step in the $K_X$-MMP, say a proper birational map
$\varepsilon:X_{i+1}\to X_i$ contracting a unique curve $C$ of negative intersection with $K_{X_i}$ spanning an extremal ray of $\overline{{\rm NE}}(X_i)$. Clearly in this set-up, we have $C^2 < 0$.
Therefore,
$$\varepsilon^*K_{X_{i}}+aC = K_{X_{i+1}},$$
for some positive coefficient $a$. This shows that $X_i$ has canonical singularities. It also shows that the induced divisor $\Delta_i:=\varepsilon_*\Delta_{i+1}$ is non-zero (since $\Delta_{i+1}\sim -K_{X_{i+1}}$ cannot be proportional to $C$).

Let us show that $P\in X_{i,{\rm reg}}$. Indeed, let $\mu:\tilde{X}\to X_{i+1}$ be the minimal resolution of the finitely many (canonical) singular points of $X_{i+1}$ contained in the curve $C$. We see that the exceptional locus of $\varepsilon\circ\mu$ is exactly the support of $\mu^*C$. Since it also holds
$$\mu^*\varepsilon^*K_{X_{i}}+a\mu^*C_i = \mu^*K_{X_{i+1}} = K_{\tilde{X}},$$
and as we recall that $a>0$, this shows that $X_i$ is terminal at the point $P$. Since $X_i$ is a surface, we thus get $P\in X_{i,{\rm reg}}$, as wished.

We can now apply Lemma~\ref{lem:fun-under-surj-smooth} to obtain $
\pi_1(X_{i+1,{\rm reg}})=\pi_1(X_{i,{\rm reg}})$ and thus conclude the proof of this lemma. Moreover, note that the pair $(X_i,\Delta_i)$ automatically inherits being log canonical from the old pair $(X_{i+1},\Delta_{i+1})$. Using that $P\in X_{i,{\rm reg}}$ and that $\Delta_i$ has coefficients in $\{0,1\}$ makes it easy to check that the pair $(X_i,\Delta_i)$ is dlt at the point $P$, hence dlt.
\end{proof}

Second, we prove that a very particular type of dlt Calabi--Yau pairs always have residually finite orbifold fundamental groups.

\begin{lemma}\label{lem:rank-1-dP-Gor-with-simply-smooth-locus}
Let $(X,\Delta)$ be a dlt pair of dimension 2 such that $K_X+\Delta\sim 0$ and $\Delta\ne 0$.
Assume that $X$ is a klt Fano surface of Picard rank one and that the group $\pi_1(X_{\rm reg})$ is trivial.
Then the group $\pi_1^{\rm orb}(X,\Delta)$ is abelian of rank at most $3$.
\end{lemma}

\begin{proof}[Proof of Lemma \ref{lem:rank-1-dP-Gor-with-simply-smooth-locus}]
First note that if $X$ is smooth, then $X\simeq \pp^2$ and $\Delta$ is a sum of three lines $\ell_1+\ell_2+\ell_3$, a sum of a smooth conic $Q$ and a non-tangent line $\ell$, or a smooth or nodal cubic $C$. We compute these fundamental groups: In the first case, we have $\pi^{\rm orb}(\pp^2,\ell_1+\ell_2+\ell_3)\simeq \zz^3$. In the second case, we blow up the two points in $Q\cap \ell$ and blow down the strict transform of $\ell$. We obtain $\pi^{\rm orb}(\pp^2,Q + \ell)\simeq \pi^{\rm orb}(\pp^1\times\pp^1,F_0+F_1+\Delta_{\pp^1})$ with $F_0,F_1$ fibers of the two projections and $\Delta_{\pp^1}$ the diagonal. By Lemma \ref{lem:nori}, this group is abelian of rank at most $2$. 
In the case when $C$ is a nodal cubic, we have $\pi_1^{\rm orb}(\pp^2,C)\simeq \pi_1^{\rm orb}(\mathbb{F}_1,s_0+s_1)$ with $\mathbb{F}_1$ the first Hirzebruch surface, $s_0$ its minimal section (of self-intersection $-1$) and $s_1$ another section intersecting $s_0$ at two points, transversally. By Lemma \ref{lem:nori}, this group is abelian of rank at most $2$. 
Finally, if $C$ is a smooth cubic, we blow-up one point on it and obtain $\pi_1^{\rm orb}(\pp^2,C)\simeq \pi_1^{\rm orb}(\mathbb{F}_1,s_0+b)$ with $\mathbb{F}_1$ the first Hirzebruch surface, $s_0$ its minimal section (of self-intersection $-1$) and $b$ a bisection intersecting $s_0$ at exactly one point $x$, transversally. Applying Lemma \ref{lem:Nori-reg-fun} at the point $x$, this group is abelian of rank at most $2$. This concludes the proof if $X\simeq \pp^2$.

From here on, we can assume that $X$ is singular. Since the support of $\Delta$ coincides with the non-klt locus of the dlt pair $(X,\Delta)$, it must be contained in the smooth locus of $X$.
Hence, $\Delta$ is a Cartier divisor, and thus $X$ is a normal Gorenstein Fano surface with quotient singularities, with
$\rho(X)=1$, and with $\pi_1(X_{{\rm reg}})=\{1\}$.
By the classification result due to Miyanishi-Zhang \cite[Lemma 6(1)]{MZ88}, either the Cartier divisor $-K_X$ generates ${\rm Pic}(X)\simeq\mathbb{Z}$, or $X$ is isomorphic to the weighted projective space $\mathbb{P}(1,1,2)$ and the divisor $-\frac{1}{2}K_X$ is Cartier and generates ${\rm Pic}(X)\simeq\mathbb{Z}$. Since $\Delta\sim -K_X$ is a sum of components which all are contained in the smooth locus of $X$, hence Cartier, either $\Delta$ is irreducible, or that $X\simeq \mathbb{P}(1,1,2)$ and $\Delta$ has exactly two components $d_0,d_{\infty}\in|\mathcal{O}_{\pp(1,1,2)}(2)|$. 

We first assume that $X\simeq \mathbb{P}(1,1,2)$ and $\Delta$ has exactly two components contained in $|\mathcal{O}_{\pp(1,1,2)}(2)|$. We blow-up the $A_1$-singular point $\varepsilon:\mathbb{F}_2\to X$ with exceptional divisor $s_0$. We can write the strict transform of $\Delta$ as $s_{1}+s_{2}$, a sum of two smooth sections of self-intersection $2$ of $\mathbb{F}_2$ (thus both disjoint from  $s_0$). Applying Lemma \ref{lem:nori} at an intersection point of $s_{1}$ and $s_{2}$ shows that $$\pi_1^{\rm orb}(X,\Delta)\simeq\pi_1^{\rm orb}(\mathbb{F}_2,s_0+s_{1}+s_{2})$$ is abelian of rank at most 2.

From now on, we can assume that $\Delta$ is irreducible. 
We take a minimal resolution $\varepsilon\colon \tilde{X}\rightarrow X$, denote its reduced exceptional divisor by $E$ and introduce the reduced irreducible divisor $\tilde{\Delta}:=\varepsilon^*\Delta$, which is disjoint from $E$. By \cite[Lemma 3]{MZ88}, we can run a $K_{\tilde{X}}$-MMP that terminates with the Mori fiber space structure of the second Hirzebruch surface $f\colon \mathbb{F}_2\rightarrow \pp^1$. The divisors obtained from $E$ and $\tilde{\Delta}$ on $\tilde{X}$ by pushforward through that MMP are denoted by $E_{\mathbb{F}}$ and $\Delta_{\mathbb{F}}$ respectively. By Lemma \ref{lem:MMP-and-pi1reg}, the pair $(\mathbb{F}_2,\Delta_{\mathbb{F}})$ is a dlt $1$-complement, and by Lemma~\ref{lem:fun-under-surj}, it suffices to prove that the group $\pi_1^{\rm orb}(\mathbb{F}_2,E_{\mathbb{F}}+\Delta_{\mathbb{F}})$ is abelian of rank at most $2$.

By~\cite[Appendix, Figure 1-6]{Ye02},  the divisor $E_{\mathbb{F}}$ in $\mathbb{F}_2$ is contained in $F_0\cup s_0$, where $F_0$ is a fiber of $f$ and $s_0$ is its minimal section. The curve $\Delta_{\mathbb{F}}\in |-K_{\mathbb{F}_2}|$ is a bisection of $f$, in particular it has arithmetic genus $1$. If it has geometric genus $1$, it is smooth. At a point where the double cover $f|_{\Delta_{\mathbb{F}}}$ is ramified, applying Lemma~\ref{lem:Nori-reg-fun} then shows that the group $\pi_1^{\rm orb}(\mathbb{F}_2,F_0+s_0+\Delta_{\mathbb{F}})$ is abelian of rank at most two, as wished. Otherwise, $\Delta_{\mathbb{F}}$ has geometric genus $0$ and a single node, and applying Lemma~\ref{lem:Nori-reg-fun} at the nodal point concludes.
\end{proof}

We can now prove Proposition~\ref{prop:lc-cy}.

\begin{proof}[Proof of Proposition~\ref{prop:lc-cy}]
We assume that the Calabi--Yau pair $(X,\Delta)$ is log canonical but not klt. In particular, it has coregularity $0$ or $1$. By Lemmas \ref{lem:coreg-vs-complementCY} and~\ref{lem:index-one-cover}, we can thus take the index one cover $p:(Y,\Delta_Y) \rightarrow (X,\Delta)$, which is cyclic and of degree at most $6$. Note that, by construction, $(Y,\Delta_Y)$ is a $1$-complement.
Using \cite[Proposition 2.16]{MS21}, we take a ${\rm Gal}(p)$-equivariant dlt modification $\varepsilon_Y:(Y',\Delta_{Y'})\to (Y,\Delta_Y)$.
We have a commutative diagram as follows 
\[
\xymatrix{
(Y',\Delta_{Y'}) \ar[r]^-{/{\rm Gal}(p)} \ar[d]_-{\varepsilon_Y} 
& (X',\Delta')  \ar[d]^-{\varepsilon} \\ 
(Y,\Delta_{Y}) \ar[r]_-{p}
& (X,\Delta) .
}
\]
By Proposition \ref{prop:galois1}, the group $\pi_1^{\rm orb}(Y',\Delta_{Y'})$ embeds as a normal subgroup of index at most $6$ in $\pi_1^{\rm orb}(X',\Delta')$.
The pair $(X',\Delta')$ remains a log canonical Calabi--Yau pair with coefficients in $\{0,1\}$, and since $\varepsilon_Y$ is a dlt modification, the reduced exceptional divisor $E$ of $\varepsilon$ satisfies $\Delta'\ge E$. So by the same argument as Lemma \ref{lem:dlt-mod-vs-fun}, we have an isomorphism 
$\pi_1^{\rm orb}(X',\Delta') \simeq \pi_1^{\rm orb}(X,\Delta)$.
We proceed in two cases.

\medskip 

\noindent\underline{\textit{Case 1:}} Assume that $(Y',\Delta_{Y'})$ is not hereditarily rationally connected\\

By Proposition \ref{prop:galois1}, let $H\triangleleft\pi_1^{\rm orb}(Y',\Delta_{Y'})\triangleleft \pi_1^{\rm orb}(X',\Delta')$
be a normal subgroup of finite index corresponding to a non rationally connected compatible finite Galois cover of $(Y',\Delta_{Y'})$. We take $N\triangleleft \pi_1^{\rm orb}(X',\Delta')$ to be the largest subgroup of $H$ that is normal in $\pi_1^{\rm orb}(X',\Delta')$ and consider the compatible finite Galois cover induced by Proposition \ref{prop:galois2}, that is $q:(Z,\Delta_Z)\to (X',\Delta_{X'})$. Note that $Z$ is not rationally connected. 

By \cite[Proposition 5.20(4)]{KM98}, $Z$ is a klt surface.
We run a ${\rm Gal}(q)$-equivariant $K_{Z}$-MMP. Since $Z$ is not rationally connected and since both klt Fano surfaces and klt conic bundles are rationally connected, it terminates with a ${\rm Gal}(q)$-equivariant Mori fiber space to a smooth elliptic curve. Quotienting by ${\rm Gal}(q)$ yields a $K_{X'}$-MMP that terminates with a Mori fiber space to a curve, and Lemma~\ref{lem:fun-under-surj} and Proposition~\ref{prop:mfs} conclude the proof in this case.

\medskip

\noindent\underline{\textit{Case 2:}} Assume that $(Y',\Delta_{Y'})$ is hereditarily rationally connected. 

\medskip

This means that $(X',\Delta')$ is hereditarily rationally connected too. By Corollary~\ref{cor:res-finite-for-surf}, it thus suffices to show that $\pi_1^{\rm orb}(X',\Delta')$ is residually finite to conclude. It clearly suffices to show that the group $\pi_1^{\rm orb}(Y',\Delta_{Y'})$ is residually finite.

\medskip

\hfill \begin{minipage}{0.95\textwidth}
\noindent\underline{\textit{Case 2.1:}} Assume that the group $\pi_1(Y'_{\rm reg})$ is infinite.

\medskip

We run a $K_{Y'}$-MMP and denote its end-product by $Y_0$ with the pushforward divisor $\Delta_0$. By Lemma \ref{lem:MMP-and-pi1reg}, the pair $(Y_0,\Delta_0)$ is dlt, we still have $K_{Y_0}+\Delta_0\sim 0$ and $\Delta_0\neq 0$, and the group $\pi_1(Y_{0,{\rm reg}})$ is infinite. Since $Y_0$ is rationally connected, the canonical divisor $K_{Y_0}$ is not nef, hence the surface $Y_0$ is a Mori fiber space. By \cite[Theorem 2]{Bra20}, it must be a Mori fiber space over a curve, thus Proposition \ref{prop:res-finite-mfs}, and Lemmas~\ref{lem:fun-under-surj} and \ref{lem:resfinimage} show that $\pi_1^{\rm orb}(Y',\Delta_{Y'})$ is residually finite, as wished.
\end{minipage}

\medskip 

\hfill \begin{minipage}{0.95\textwidth}
\noindent\underline{\textit{Case 2.2:}} Assume that the group $\pi_1(Y'_{\rm reg})$ is finite.

\medskip

Lemma \ref{lem:fun-under-surj} provides a surjective homomorphism
$\pi_1^{\rm orb}(Y',\Delta_{Y'})\twoheadrightarrow \pi_1(Y'_{\rm reg})$. Its kernel now is a normal subgroup of finite index of $\pi_1^{\rm orb}(Y',\Delta_{Y'})$ and corresponds by Proposition \ref{prop:galois2} to a compatible finite Galois cover $q\colon (Z,\Delta_{Z})\rightarrow (Y',\Delta_{Y'})$. 
Note that $(Z,\Delta_Z)$ is still a dlt pair with $K_{Z}+\Delta_{Z}\sim 0$ and $\Delta_Z\neq 0$. Moreover, by construction, the fundamental group of the quasiprojective variety $q^{-1}(Y_{0,{\rm reg}})$ is trivial. Since $\Delta_0$ is snc and contained in the smooth locus $Y_{0,{\rm reg}}$, the preimage $q^{-1}(Y'_{\rm reg})$ is a Zariski open subset of $Z_{\rm reg}$. So $\pi_1(Z_{\rm reg})=\{1\}.$
\end{minipage}

\hfill \begin{minipage}{0.95\textwidth}
Now, it suffices to show that $\pi_1^{\rm orb}(Z,\Delta_Z)$ is residually finite. We run a $K_Z$-MMP, which terminates with a pair $(Z_0,\Delta_0)$. This pair is dlt, we still have $K_{Z_0}+\Delta_0\sim 0$ and $\Delta_0\neq 0$, and the group $\pi_1(Z_{0,{\rm reg}})$ is trivial. Since $Z$ is rationally connected, the surface $Z_0$ is a Mori fiber space. If it is a Mori fiber space over a curve, Proposition \ref{prop:res-finite-mfs}, and Lemmas~\ref{lem:fun-under-surj} and \ref{lem:resfinimage} show that $\pi_1^{\rm orb}(Z,\Delta_{Z})$ is residually finite, as wished. Otherwise, the surface $Z_0$ is Fano and of Picard number one, thus Lemma \ref{lem:rank-1-dP-Gor-with-simply-smooth-locus} concludes.
\end{minipage}
\end{proof}

\section{Log canonical Fano surfaces} 
The main result of this section is the following proposition.

\begin{proposition}\label{prop:lc-Fano-case}
Let $(X,\Delta)$ be a log canonical Fano surface pair.
Then, the group $\pi_1^{\rm orb}(X,\Delta)$ admits a normal subgroup of index at most $7200$ that is abelian of rank at most $3$, or a quotient of the nilpotent group $H_k$ for some $k\ge 1$. Moreover, if $\pi_1^{\rm orb}(X,\Delta)$ is not virtually abelian, then $(X,\Delta)$ is birationally equivalent to a log canonical Calabi--Yau pair on surface that is a Mori fiber space to a curve.
\end{proposition}

We will first establish several lemmas which deal with particularly relevant special cases arising in the proof of Proposition \ref{prop:lc-Fano-case}, after an important reduction step. All of these lemmas are set-up with a plt Fano surface pair $(Z,S+L)$, where
$S$ is a curve in $Z$, $L$ is a divisor with standard coeficients strictly smaller than $1$, and the normal surface $Z$ has Picard rank $1$. They also all share the assumption that the curve pair obtained by adjunction of $(Z,S+L)$ to $S$ is of sporadic type. Among various additional assumptions, our lemmas describe the orbifold fundamental group of the pair $\pi_1^{\rm orb}(Z,S+L)$. Different assumptions yield different conclusions, but one thing remains: This orbifold fundamental group is always residually finite.

\subsection{Some toric preparatory results}

We start with the following lemma, which provides information on orbifold fundamental groups in a toric setting. 

\begin{lemma}\label{lem:toric-fund-group-1}
Let $Z$ be a normal toric surface of Picard rank $1$ and $S_1,S_2$ be two distinct reduced irreducible components of the toric boundary divisor of $Z$. Then, if there is a curve $C$ that intersects $S_1$ transversally at a unique point, smooth in $Z$ and distinct from the intersection point $S_1\cap S_2$, then the group $\pi_1^{\rm orb}(Z,S_1+S_2+C)$ is abelian of rank at most two.
\end{lemma}

\begin{proof}
Let $\varepsilon:Y\rightarrow Z$ be the toric blow-up of the torus fixed point $S_1\cap S_2$, and denote by $E$ its exceptional divisor. In terms of fans, if we denote by $\Sigma_Z(1)=\{v_1,v_2,v_3\}$ the rays of the toric fan of $Z$, where $v_1$ and $v_2$ correspond to $S_1$ and $S_2$ respectively, the toric fan of $Y$ has rays $\Sigma_Y=\{v_1,-v_3,v_2,v_3\}$.
In particular, if we denote by $S_{Y,i}$ the strict transform of $S_i$ and
by $C_Y$ the strict transform of $C$, we have $(S_{Y,i})^2 = 0$, $S_{Y,1}\cdot S_{Y,2} = 0$, and $-K_Y\cdot S_{Y,i}>0$. Since $\rho(Y)=2$, this provides a Mori fiber space structure $f\colon Y\rightarrow B$ onto a smooth toric curve $B$ ($\simeq\pp^1$) of which $S_{Y,1}$ and $S_{Y,2}$ are two distinct irreducible fibers.
Moreover, note that the general fiber $F$ of $f$ and the torus-invariant divisor $S_{Y,3}$ corresponding to the vector $v_3$ in the fan $\Sigma_Y$ satisfy $K_Y+S_{Y,1}+S_{Y,2}+S_{Y,3}+E\equiv 0$, hence $F\cdot E = F\cdot S_{Y,3} = 1$. So $E$ is a section of $f$.
Since multiple fibers of $f$ must be torus-invariant, they may only arise along the $S_{Y,i}$.

  The pair $(Y,S_{Y,1}+S_{Y,2}+C_Y+E)$ induces a curve pair on the base $B$ of orbifold fundamental group $\zz$, generated by the class $f_{\bullet}(\gamma_{S_{Y,1}})$. Let $y$ be the unique point of (transversal) intersection of $C_Y$ and $S_{Y,1}$. It satisfies the assumptions of Lemma \ref{lem:Nori-reg-fun}, thus yields a surjective group homomorphism
$$\pi_1^{\rm reg}(Y,S_{Y_1}+S_{Y,2}+C_Y+E;y)\twoheadrightarrow \pi_1^{\rm orb}(Y,S_{Y,1}+S_{Y,2}+C_Y+E).$$
Moreover, we have $\pi_1^{\rm reg}(Y,S_{Y_1}+S_{Y,2}+C_Y+E;y)\simeq \pi_1^{\rm reg}(Y,S_{Y_1}+C_Y;y)$. Since locally at $y$, the surface $Y$ is smooth and the divisor $S_{Y_1}+C_Y$ is snc, this group is isomorphic to $\zz^2$, as wished.
\end{proof}

The following definition is made {\it ad hoc}, and will be crucial to several arguments in a toric set-up as well.

\begin{definition}
    Let $Z$ be a normal toric surface with Picard rank $1$ and $S$ be an irreducible reduced component of the toric boundary divisor of $Z$. 
    We define the {\it standard $Z- S$ toric cover} as the unique toric finite Galois cover $p:X\to Z$ that is étale over $Z_{\rm reg}\setminus S$ and such that the pre-image
    $p^{-1}(Z\setminus S)$ is isomorphic to the smooth affine plane $\cc^2$.
\end{definition}

\begin{remark}\label{rem:standardtoric} We make a few remarks pertaining to this definition.
\begin{enumerate}
    \item Note that such a cover always exists: Indeed, let $\Sigma_Z$ be the toric fan of the surface $Z$ in the lattice $N_Z$; it is spanned by three vectors $v_1,v_2,v_3$, where the ray spanned by the vector $v_3$ corresponds to the torus-invariant prime divisor $S$. Considering the same fan $\Sigma_X := \Sigma_Z$ with respect to the sublattice $N_X := v_1\zz+v_2\zz\subseteq N_Z$, we obtain a toric morphism $p:X\to Z$ which is a finite Galois cover by \cite[Example 5.0.13]{CLS11}.    
    Clearly, the preimage $p^{-1}(Z\setminus S)$ is isomorphic to an affine toric variety of dimension $2$ encoded by the cone $v_1\rr_{+} +v_2\rr_{+}$ and the lattice $v_1\zz+v_2\zz$: It is clearly smooth and isomorphic to $\cc^2$, as wished. Clearly, the morphism $p$ is quasiétale above $Z\setminus S$, hence étale above $Z_{\rm reg}\setminus S$.
    \item The uniqueness of $p$ follows from toricity and the fact that $\cc^2$ is smooth and simply connected.
    \item By construction, the normal toric surface $X$ still has Picard rank $1$. The pre-image $S_X:= p^{-1}(S)$ remains a prime torus-invariant divisor in $X$; in the toric fan $\Sigma_X$, it corresponds to the ray spanned by the vector $v_3$.
    \item The singular points of $X$ and their respective orbifold indices are in bijection with the singular points of $Z$ contained in the curve $S$ with their respective orbifold indices in $Z$.
\end{enumerate}
    
\end{remark}

\subsection{When $L$ has two or more components}

We prove the next few lemmas using a toric characterization and the toric preparatory tools we developed.

\begin{proposition}\label{prop:toric-fund-group-3comps}
Let $(Z,S+L)$ be a plt Fano surface pair, where
$S$ is a curve in $Z$, $L$ is a divisor with standard coeficients strictly smaller than $1$, and $Z$ has Picard rank $1$. Assume moreover 
that the curve pair $(S,\Delta_S)$ obtained by adjunction of $(Z,S+L)$ to $S$ is of sporadic type.
If $L$ has three or more irreducible components, then the group $\pi_1^{\rm orb}(Z,S+L)$ is finite.
\end{proposition}

\begin{proof}
Let $d$ be the number of components of $L$. Since they each appear with coefficient at least $\frac{1}{2}$, the complexity of $(Z,S+L)$ as in Definition~\ref{def:complexity} satisfies 
$$c(Z,S+L+\varepsilon C)\le 3-1-\frac{d}{2}-\varepsilon.$$ 
By Proposition~\ref{prop:toric-characterization}, we derive that $d=3$, $Z$ is a normal toric surface, and $S+\lfloor L\rfloor$ is contained in the toric boundary divisor.


By assumption, we have $(S,\Delta_S)\cong (\pp^1,\frac{1}{2}\{0\}+\frac{2}{3}\{1\}+\frac{n-1}{n}\{\infty\})$ for some $n\in\{3,4,5\}$.
Since $Z$ has Picard rank $1$, each of the three components of $L$ intersects $S$, thereby contributing positively to the divisor $\Delta_S$ as prescribed by the adjunction formula (Lemma \ref{lem:coeff-under-adj}). Since $2,3,5$ are prime numbers and $4$ can only be written non-trivially as a product of $2$ with itself, that either $S$ is contained in $Z_{\rm reg}$, or $n=4$ and $S$ contained exactly one singular point of $Z$ of orbifold index $2$.

From there, we consider the standard $Z - S$ toric cover $p:X\to Z$. By Remark \ref{rem:standardtoric}(4), the normal toric surface $X$ has Picard rank one, and is either smooth, or has exactly one singular point of orbifold index two. Therefore, $X$ is isomorphic to $\pp^2$ or to the weighted projective space $\pp(1,1,2)$. The pre-image $S_X$ of $S$ in $X$ is a line.
Let $L_X:=p^*L$, and note that $p:(X,S_X+L_X)\to (Z,S+L)$ is a compatible finite Galois cover. By Proposition \ref{prop:galois1}, we are reduced to showing that the group $\pi_1^{\rm orb}(X,S_X+L_X)$ is finite.

Note that $L_X$ still has exactly three components, and that the pair $(S_X,\Delta_{S_X})$ obtained by adjunction of $(X,S_X+L_X)$ to $S_X$ remains of the same sporadic type as $(S,\Delta_S)$. In particular, each component of $L_X$ intersects $S_X$ at exactly one point. 

If $X\simeq \pp^2$, the components of $L_X$ are three lines $\ell_1,\ell_2,\ell_3$ in the order of decreasing standard coefficients. We blow-up the unique intersection point $\ell_1\cap S_X$ in $X$, then apply Lemma \ref{lem:Nori-reg-fun} at the unique intersection point $\ell_2\cap \ell_3$: It shows that $\pi_1^{\rm orb}(X,S_X+L_X)$ has a normal subgroup of index at most $5$ that is a quotient of 
$$\pi_1^{\rm reg}\left(\cc^2,\frac{1}{2}\cc\times \{0\} + \frac{2}{3}\{0\}\times \cc;(0,0)\right)\simeq \zz/6\zz,$$
thus it is indeed finite.

Otherwise, we have $X\simeq\pp(1,1,2)$, and one component of $L_X$, say $\ell_1$, has standard coefficient $\frac{1}{2}$ and intersects $S_X$ at the unique singular point, i.e., $\ell_1$ is also a line. The other two components of $L_X$, say $\ell_2$ and $\ell_3$, have coefficients $\frac{1}{2}$ and $\frac{2}{3}$, and intersect $S_X$ at one smooth point each, hence they must be reduced irreducible sections of $\mathcal{O}_{\pp(1,1,2)}(2)$. In particular, they intersect each other transversally at exactly two points. We blow-up the unique singular point of $X$ which is also $\ell_1\cap S_X$, then apply Lemma \ref{lem:Nori-reg-fun} at one of the intersection points in $\ell_2\cap \ell_3$: It shows that $\pi_1^{\rm orb}(X,S_X+L_X)$ has a normal subgroup of index $2$ that is a quotient of 
$$\pi_1^{\rm reg}\left(\cc^2,\frac{1}{2}\cc\times \{0\} + \frac{2}{3}\{0\}\times \cc;(0,0)\right)\simeq \zz/6\zz,$$
hence is indeed finite. This concludes this proof.
\end{proof}

\begin{proposition}\label{prop:toric-fund-group-2comps}
Let $(Z,S+L)$ be a plt Fano surface pair, where
$S$ is a curve in $Z$, $L$ is a divisor with standard coeficients strictly smaller than $1$, and $Z$ has Picard rank $1$. Assume moreover 
that the curve pair $(S,\Delta_S)$ obtained by adjunction of $(Z,S+L)$ to $S$ is of sporadic type.
If $L$ has exactly two irreducible components, then the group $\pi_1^{\rm orb}(Z,S+L)$ is virtually abelian of rank at most two.
\end{proposition}

\begin{proof}
Let $L_0$ and $L_1$ denote the two components of $L$. Since they each appear with coefficients at least $\frac{1}{2}$ in $L$, Proposition~\ref{prop:toric-characterization} applies to the lc Fano surface pair $(Z,S+(1+\varepsilon) L)$ for $\varepsilon>0$ small enough. It shows that $Z$ is a toric surface with the curve $S$ contained in its toric boundary divisor. We now investigate how the components of $L$ intersect $S$. 

By assumption, adjunction to $S$ provides a pair of sporadic type $(S,\Delta_S)\cong (\pp^1,\frac{1}{2}\{0\}+\frac{2}{3}\{1\}+\frac{n-1}{n}\{\infty\})$ for some $n\in\{3,4,5\}$.
Since $Z$ has Picard rank $1$, each component of $L$ intersects $S$, thereby contributing positively to the divisor $\Delta_S$ as prescribed by the adjunction formula (Lemma \ref{lem:coeff-under-adj}). Since $2,3,5$ are prime numbers and $4$ can only be written non-trivially as a product of $2$ with itself, any point where $S$ and the support of $L$ intersect must be both a point of transversal intersection, and either a smooth point of $X$ or a singular point of $X$ of type $A_1$ that appears with coefficient $\frac{3}{4}$ in the adjunction divisor $\Delta_S$.

\medskip

Let us first assume that the components $L_0$ and $L_1$ of $L$ intersect $S$ at exactly one point each. Then at least one of them, say $L_0$, contributes to a point with coefficient $\neq\frac{3}{4}$ in $\Delta_S$. Hence, the intersection point $Q\in S\cap L_0$ is a transversal intersection point, and a smooth point of $Z$.
Moreover, since $L_1$ intersects $S$ in exactly one point as well, we can take the log canonical Fano curve pair obtained by adjunction of $(Z,S+b_1L_1)$ to $S$: Its divisor is supported exactly on the two points of ${\rm Supp}\,\Delta_S \setminus \{Q\}$ hence, it has a $1$-complement, which by Lemma \ref{lem:lifting-complements} lifts to a $1$-complement of the pair $(Z,S+b_1L_1)$. It can be written $(Z,S+L_1+R)$ for some integral Weil divisor $R\ge 0$. Since $L_1$ only intersects $S$ at one point, the divisor $R$ has to intersect $S$ at exactly one point too, thus $R$ is effective and non-zero, i.e., ample (since $\rho(Z)=1$). Therefore, for $\varepsilon > 0$ small enough, the pair $(Z,S+\varepsilon L_0 + L_1)$ is a log canonical Fano pair. By Proposition \ref{prop:toric-characterization}, this shows that $L_1$ is contained in the toric boundary divisor of $Z$.
Hence, Lemma~\ref{lem:toric-fund-group-1} concludes.

\medskip

We now assume that $L_0$ intersects $S$ in exactly one point $Q$, and $L_1$ intersects $S$ exactly two points $P$ and $P'$.
This three points form the whole support of $\Delta_S$, which shows that either $S$ is contained in $Z_{\rm reg}$, or $n=4$ and $S$ contained exactly one singular point of $Z$ of orbifold index $2$.
From there, we consider the standard $Z - S$ toric cover $p:X\to Z$. By Remark \ref{rem:standardtoric}(4), the normal toric surface $X$ has Picard rank one, and is either smooth, or has exactly one singular point of orbifold index two. Therefore, $X$ is isomorphic to $\pp^2$ or to the weighted projective space $\pp(1,1,2)$. The pre-image $S_X$ of $S$ in $X$ is a line. Let $L_X:=p^*L$, and note that $p:(X,S_X+L_X)\to (Z,S+L)$ is a compatible finite Galois cover. By Proposition \ref{prop:galois1}, we are reduced to showing that the group $\pi_1^{\rm orb}(X,S_X+L_X)$ is virtually abelian of rank at most two.
The pair $(S_X,\Delta_{S_X})$ obtained by adjunction of $(X,S_X+L_X)$ to $S_X$ remains of the same sporadic type as $(S,\Delta_S)$, thus $L_X$ still has exactly two components $L_{0,X}$ and $L_{1,X}$.

If $X\simeq \pp^2$, the fact that $S_X$ is in the smooth locus of $X$ and the adjunction formula show that one component of $L_X$, say $L_{0,X}$, has coefficient $\frac{1}{2}$ and intersects $S_X$ at exactly one point, while the other component, namely $L_{1,X}$, has coefficient $\frac{2}{3}$ and intersects $S_X$ at exactly two points.
We blow-up one of the intersection points in $L_{1,X}\cap S_X$ in $X\simeq\pp^2$, yielding a Mori fiber space $f:\mathbb{F}_1\to \pp^1$ of which the exceptional divisor $E$ and the strict tranforms $L_{0,\mathbb{F}}$, $L_{1,\mathbb{F}}$ are sections, whilst the strict transform $S_{\mathbb{F}}$ is a smooth fiber. We apply Lemma \ref{lem:Nori-reg-fun} at the intersection point $E\cap L_{1,\mathbb{F}}$: It shows that $\pi_1^{\rm orb}(X,S_X+L_X)$ is a quotient of 
$$\pi_1^{\rm reg}\left(\cc^2,\frac{2}{3}\cc\times \{0\} + \{0\}\times \cc;(0,0)\right)\simeq \zz/3\zz\times\zz,$$
thus it is abelian of rank at most two.

Otherwise, $X\simeq \pp(1,1,2)$. We resolve its singular point by a single blow-up $\varepsilon: \mathbb{F}_2\to X$, denote by $E$ the exceptional divisor and by $S_{\mathbb{F}},L_{0,\mathbb{F}},L_{1,\mathbb{F}}$ the strict transforms of $S_X,L_{0,X},L_{1,X}$. We now have the classical Mori fiber space $f:\mathbb{F}_2\to \pp^1$, of which $S_{\mathbb{F}}$ is a fiber, which has no multiple fibers, and of which $E,L_{0,\mathbb{F}},$ and $L_{1,\mathbb{F}}$ are sections. By Lemma \ref{lem:Nori-reg-fun} applied at the intersection point $E\cap L_{1,\mathbb{F}}$, the orbifold fundamental group $\pi_1^{\rm orb}(\mathbb{F}_2,S_{\mathbb{F}}+E+\frac{2}{3}L_{0,\mathbb{F}}+\frac{1}{2}L_{1,\mathbb{F}})$ is a quotient of 
$$\pi_1^{\rm reg}\left(\cc^2,\frac{1}{2}\cc\times \{0\} + \{0\}\times \cc;(0,0)\right)\simeq \zz/2\zz\times\zz,$$
thus it is abelian of rank at most two.

\end{proof}

\subsection{When $L$ has one component and the sporadic pair is of type $(2,3,5)$}

This special case has a particularly simple combinatorics, hence we handle it first.

\begin{proposition}\label{prop:1comp-sporadic235}
Let $(Z,S+L)$ be a plt Fano surface pair, where
$S$ is a curve in $Z$, $L$ is a divisor with exactly one component and standard coeficients smaller than one, and $Z$ has Picard rank $1$. Assume moreover 
that the curve pair obtained by adjunction of $(Z,S+L)$ to $S$ is of type
$$(S,\Delta_S)\cong \left(\pp^1,\frac{1}{2}\{0\}+\frac{2}{3}\{1\}+\frac{4}{5}\{\infty\}\right).$$
Then the group $\pi_1^{\rm orb}(Z,S+L)$ is abelian of rank at most two.
\end{proposition}

\begin{proof} 
Let $L_0$ denote the unique component of $L$.
Spelling out the adjunction formula (Lemma \ref{lem:coeff-under-adj}), we can classify the configurations of singular points of $Z$ and intersection points with $L_0$ on the smooth rational curve $S$. Let us denote by $T_k$ any singular point of orbifold index $k$ in $Z$. For instance, a point denoted by $T_2$ is an $A_1$ singularity, and a point denoted by $T_3$ is either an $A_2$ singularity, or a non-Gorenstein quotient singularity modelled after $\cc^2/\langle{\rm diag}(\zeta_3,\zeta_3)\rangle$. With this terminology, there are three possible configurations:
    
\begin{center} 
\includegraphics[scale=0.3]{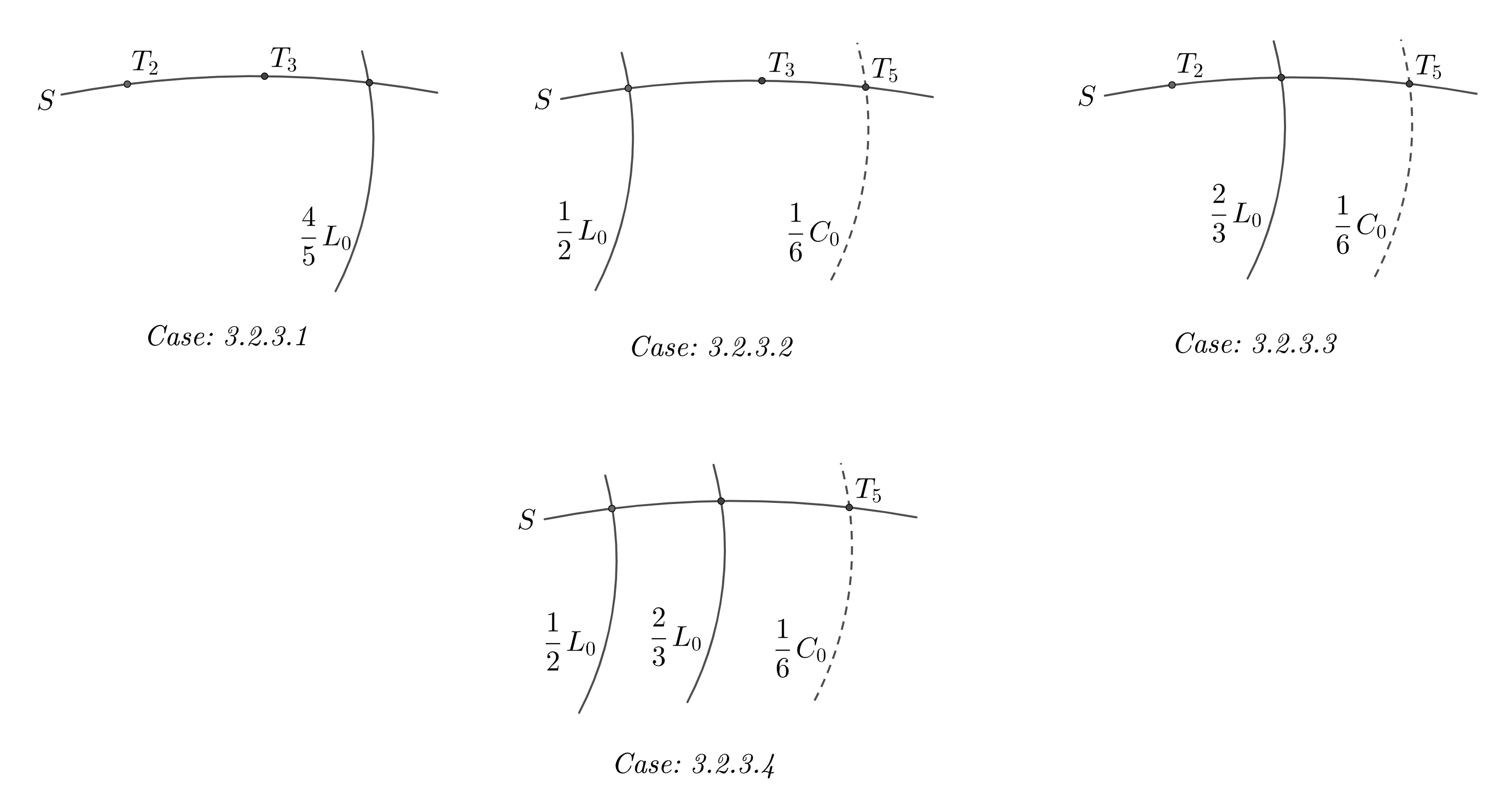}
\end{center} 

In {\it Cases 3.1} and {\it 3.2}, we note that the pair $(S,\Delta_S)$ admits a $6$-complement (by increasing the coefficient of the unique $T_5$-singular point of $Z$ that sits on $S$ from $\frac{4}{5}$ to $\frac{5}{6}$).
By Lemma \ref{lem:lifting-complements}, this complement lifts to $Z$, and we obtain a $6$-complement of the form $(Z,S+\frac{2}{3}L_0+\frac{1}{6}C_0)$ in {\it Case 3.1} and $(Z,S+\frac{1}{2}L_0+\frac{1}{6}C_0)$ in {\it Case 3.2}, where, in both cases; $C_0$ is a curve that intersects $S$ at its unique $T_5$-singular point transversally. Since $Z$ has Picard rank $1$, both $L_0$ and $L_0-\frac{5}{3}C_0$ are ample divisors. Hence we have, for small enough $\varepsilon> 0$, a log canonical Fano pair $(Z,S+\varepsilon L_0 + C_0)$. By Proposition~\ref{prop:toric-characterization}, this shows that $Z$ is a normal toric surface with the curves $S$ and $C_0$ contained in its toric boundary divisor of $Z$. Since $L_0$ intersects $S$ transversally at a unique smooth point, Lemma~\ref{lem:toric-fund-group-1} concludes.

In {\it Case 3.3}, let us first focus on the plt Fano pair $(Z,S+\frac{3}{4}L_0)$. Performing adjunction of this pair to $S$, we obtain a pair $(S,B_S)\cong (\pp^1,\frac{1}{2}\{0\}+\frac{2}{3}\{1\}+\frac{3}{4}\{\infty\})$, which admits a $4$-complement (by increasing $\frac{2}{3}$ to $\frac{3}{4}$). By Lemma \ref{lem:lifting-complements}, this complement lifts to $Z$, and yields a $4$-complement of the form $(Z,S+\frac{3}{4}L_0+\frac{1}{4}C_0)$ where $C_0$ is a curve intersecting $S$ transversally at the unique $T_3$-singular point that it contains. Since $L_0-C_0$ is ample on $Z$, the log canonical pair $(Z,S+C_0)$ is again Fano. By Proposition \ref{prop:toric-characterization}, this means that $Z$ is a toric surface, while $S$ and $C_0$ are components of its toric boundary divisor. Since $L_0$ intersects $S$ transversally at a unique smooth point, Lemma~\ref{lem:toric-fund-group-1} concludes.
\end{proof}

\subsection{When $L$ has one component and the sporadic pair obtained is of type $(2,3,3)$}

Although toric arguments play an important role in its study, the next special case cannot be fully understood from a toric perspective. The main result of this subsection is the following.

\begin{proposition}\label{prop:1comp-sporadic233}
Let $(Z,S+L)$ be a plt Fano surface pair, where
$S$ is a curve in $Z$, $L$ is a divisor with exactly one component and standard coefficients smaller than one, and $Z$ has Picard rank $1$. Assume moreover 
that the curve pair obtained by adjunction of $(Z,S+L)$ to $S$ is of type
$$(S,\Delta_S)\cong \left(\pp^1,\frac{1}{2}\{0\}+\frac{2}{3}\{1\}+\frac{2}{3}\{\infty\}\right).$$
Then the group $\pi_1^{\rm orb}(Z,S+L)$ is residually finite.
\end{proposition}

Prior to proving this proposition, we introduce and prove a key lemma to deal with a particularly tedious, non-toric situation.

\begin{lemma}\label{lem:t3-t3}
Let $(Z,S+\frac{1}{2}C)$ be a plt Fano surface pair, where
$S$ and $C$ are curves in $Z$ and $Z$ has Picard rank $1$. 
Assume that $S\cap Z_{\rm sing}$ consists in two points, $x$ and $y$, both of $T_3$-type, and that $S\cap C$ consists in exactly one point $z$, distinct from $x$ and $y$, at which $S$ and $C$ intersect transversally.
Then the group $\pi_1^{\rm orb}(Z,S+\frac{1}{2}C)$ is residually finite.
\end{lemma}

\begin{proof}
We apply the adjunction formula (that is, Lemma \ref{lem:coeff-under-adj}) for the pair $(Z,S)$ with respect to the curve $S$. It  provides the curve pair $(S,\frac{2}{3}\{x\}+\frac{2}{3}\{y\})$, which admits a $1$-complement $(\pp^1,\{0\}+\{\infty\})$. By Lemma~\ref{lem:lifting-complements}, it extends to a $1$-complement $(Z,S+\Gamma)$. 
If $\Gamma$ has two components $\Gamma_0$ and $\Gamma_1$, then Proposition~\ref{prop:toric-characterization} shows that $Z$ is a normal toric surface, and $S+\Gamma$ is exactly the toric boundary divisor. Then, applying Lemma~\ref{lem:toric-fund-group-1} to the pair $(Z,S+\Gamma_0+C)$ concludes.

We now assume that $\Gamma$ has exactly one reduced irreducible component.  Considering that the dual complex $\mathcal{D}(Z,S+\Gamma)$ contains a loop, by Remark \ref{rem:dual-complex}, it is piecewise linearly homeomorphic to the manifold $S^1$, thus $\Gamma$ contains no further log canonical center. Hence, the pair $(Z,S+\Gamma)$ is plt, and by adjunction to $\Gamma$, the surface $Z$ is smooth along $S\cup \Gamma \setminus\{x,y\}$. Since $K_Z+S+\Gamma\sim 0$, every point in $Z\setminus \{x,y\}$ is klt and Gorenstein, hence canonical. As for the points $x,y\in Z_{\rm sing}$, note that they are quotient surface singularities of orbifold index $3$: They are each locally isomorphic to the canonical singularity $A_2$, or to the singularity $\mathbb{C}^2/\langle{\rm diag}(\zeta_3,\zeta_3)\rangle$, which we denote by $C_3$.



\medskip
\noindent\underline{\textit{Case 1:}} The points $x,y$ are both of type $A_2$.

\noindent In that last case, $X$ is a Gorenstein del Pezzo surface of Picard rank $1$. By the classification result by Miyanishi-Zhang~\cite[Lemma 6]{MZ88}, there is a finite quasi-\'etale Galois cover $p:\pp^2 \rightarrow X$. Let $S_{\pp}:=p^*S$ and $C_{\mathbb P}:=p^*C$, so that $p$ is compatible with the plt Fano pairs $(\pp^2,S_{\mathbb P}+\frac{1}{2}C_{\mathbb P})$ and $(Z,S+\frac{1}{2}C)$. 
Note that $C_{\pp}\in|\mathcal{O}_{\pp^2}(c)|$ for some $c\in\{1,2,3\}$. We take $B$ to be the sum of $4-c$ lines in $\pp^2$, chosen general enough that the Calabi--Yau pair $(\pp^2,S_{\pp}+\frac{1}{2}C_{\pp}+\frac{1}{2}B_{\pp})$ is log canonical. Then, Proposition \ref{prop:lc-cy} and Lemma \ref{lem:fun-under-surj} conclude.

\medskip 

\noindent\underline{\textit{Case 2:}} The point $x$ is of type $A_2$, and the point $y$ is of type $C_3$.

\noindent We take the minimal resolution $\varepsilon\colon X\rightarrow Z$ of the singular point $y$, and denote its exceptional divisor by $E$. Let $S_X,\Gamma_X,C_X$ denote the strict transforms of $S,\Gamma,C$. Note that $X$ is a normal klt Gorenstein surface of Picard rank $2$, thus has canonical singularities, and that it is smooth along $E$. Since $K_X$ remains anti-effective, a single step of the $K_X$-MMP provides an extremal birational contraction or a Mori fiber space $f:X\to B$.

First, assume that this MMP provides an extremal birational contraction $\mu: X\to Y$. The surface $Y$ has canonical singularities and Picard rank $1$. Denote by $E'$ the curve that $\mu$ contracts. Since $E$ has square $-3$, is contained in the smooth locus of $X$, and $Y$ has canonical singularities, $E$ cannot be $E'$. We also check that $E'$ is neither $S_X$ not $\Gamma_X$: By contradiction, assume that $E'$ is one of these two curves, then $E'$ contains exactly one singular point of $X$, that is of type $A_2$. 
Since $E'$ is either $S_X$ or $\Gamma_X$, we also have 
$$K_X\cdot E' = K_Z\cdot\varepsilon_* E' - \frac{1}{3}E\cdot E' < 0,$$ 
so the discrepancy of $\mu$ along $E'$ is strictly positive, i.e., $\mu$ contracts $E'$ to a smooth point of the surface $Y$. Thus, $K_X=\mu^*K_Y + E'$, and $K_X\cdot E' = E'^2$ is a negative integer. By adjunction of the plt pair $(Y,E')$ to $E'$, we obtain a pair $(E',\Delta_{E'})$ with a negative, even integer as its degree. So $E'\simeq \pp^1$ and $\Delta_{E'}\sim 0$. Yet, $E'$ contains a singular point of $Y$ (of $A_2$-type), which must have a non-zero contribution to $\Delta_{E'}$, a contradiction to Lemma \ref{lem:coeff-under-adj}.

So the curve $E'$ contracted by $\mu$ is not contained in the support of $S_X+\Gamma_X+E$. Since $\rho(Y)=1$, the image of $E$ in $Y$ has positive self-intersection, hence $E'$ and $E$ intersect. Since $X$ is smooth along $E$, by Lemma~\ref{lem:Gor-MMP}, the curve $E'$ 
intersects $E$ at exactly one point, and is disjoint from both $S_X$ and $\Gamma_X$. Let $S_Y,\Gamma_Y,E_Y,$ and $C_Y$ be the images of $S_X,\Gamma_X,E,$ and $C_X$ by $\mu$.
By Proposition~\ref{prop:toric-characterization}, the log canonical Calabi--Yau pair $(Y,S_Y+\Gamma_Y+E_Y)$ is toric.
Since $E'$ and $S_X$ are disjoint, the curves $S_Y$ and $C_Y$ still intersect transversally at a single smooth point of $Y$, that is not contained in $E_Y$.
Hence, Lemma~\ref{lem:toric-fund-group-1} shows that $\pi_1^{\rm orb}(Y,S_Y+E_Y+\frac{1}{2}C_Y)$ is abelian of rank at most two, and so is $\pi_1^{\rm orb}(Z,S+\frac{1}{2}C)$.

Assume now that the MMP provides a Mori fiber space $f\colon X \rightarrow B$ instead. It has irreducible fibers, and since $C_X$ and $E$ have non-zero self-intersection, they are $f$-horizontal. Let $F$ be the general fiber of $f$.
Since $(X,S_X+\Gamma_X+E)$ is a $1$-complement and $F$ is Cartier, we notice two possibilities: Either $E\cdot F = 2$, and $S_X$ and $\Gamma_X$ are fibers of $f$, or $E$ and one of the two curves $S_X,\Gamma_X$ are sections of $f$, the remaining curve in $S_X,\Gamma_X$ being a fiber. Since $S_X$ and $\Gamma_X$ still intersect at an $A_2$ singular point, the latter is the only option, and in particular $E$ is a section of $f$. 
Since $\varepsilon^*(K_Z+S+\frac{1}{2}C) = K_X + S_X + \frac{2}{3} E + \frac{1}{2}C_X$
is anti-nef, we see that $S_X\cdot F<\frac{5}{6}$, thus $S_X\cdot F = 0$. Therefore, $S_X$ is a fiber of $f$ and $\Gamma_X$ a section. 

We want to apply Lemma \ref{lem:Nori-reg-fun} to the pair $(X,S_X+E+\frac{1}{2}C_X)$ at the intersection point $u$ of $S_X$ and $C_X$. Since it is a smooth point of transversal intersection, we have
$\pi_1^{\rm reg}(X,S_X+E+\frac{1}{2}C_X;u)\simeq\mathbb{Z}/2\mathbb{Z}\times\mathbb{Z}$. Note that $C_X$ and $E$ are sections of $f$, that the base curve $B$ is rational, and that, by the canonical bundle formula, $f: X\setminus S_X\to B\setminus\{\rm pt\}$ has zero, one, or two multiple fibers (and if two, then both with multiplicity two). Thus, we can apply Lemma \ref{lem:Nori-reg-fun} and see that $\pi_1^{\rm orb}(X,S_X+E+\frac{1}{2}C_X)$ admits a subgroup of index at most two (and thus, normal) isomorphic to a quotient of $\mathbb{Z}/2\mathbb{Z}\times\mathbb{Z}$.

\medskip 

\noindent\underline{\textit{Case 3:}} The points $x,y$ are both of type $C_3$.\\

Let $\varepsilon\colon X\rightarrow Z$ be the minimal resolution of the points $x$ and $y$, with corresponding exceptional divisors $E_1$ and $E_2$, and denote by $S_X,\Gamma_X,C_X$ the strict transforms of $S,\Gamma,C$. 
Note that $X$ is a klt Gorenstein (hence canonical) surface of Picard rank three, and smooth along the cycle of rational curves $S_X+\Gamma_X+E_1+E_2$.
As $K_X$ remains anti-effective, running a $K_X$-MMP terminates after one or two steps, with a Mori fiber space to a curve or a point. 
Since $-K_X = S_X+\Gamma_X+E_1+E_2$ is not nef (see, e.g., that $-K_X\cdot E_1 = -1$), a $K_X$-MMP does require two steps in order to terminate. The first step must be a divisorial contraction: We denote it by $\mu : X\to Y$, and let $E'$ denote its exceptional divisor.

Let us first prove that $E'$ is none of the four curves $S_X,\Gamma_X,E_1,E_2$. We argue by contradiction: If $E'$ is one of them, smoothness of $X$ along the cycle $S_X+\Gamma_X+E_1+E_2$ shows that $E'$ is a $(-1)$-curve, thus $S_X$ or $\Gamma_X$. Hence, the images $E_{1,Y},E_{2,Y}$ of $E_1,E_2$ in the surface $Y$ of Picard rank two are two curves of square $-2$: They span the Mori cone $\overline{NE}(Y)$. The next step of a $K_Y$-MMP must contract one of them, but they are $(-2)$-curves, thus have zero intersection with $K_Y$, a contradiction.

Let us discuss the geometry of $Y$ some more. By Lemma \ref{lem:Gor-MMP}, $E'$ intersects $S_X+\Gamma_X+E_{1}+E_{2}$ in exactly one point $p$. In particular, $E'$ is disjoint from at least one of the two (themselves disjoint) curves $E_1$ and $E_2$, say $E_i$. The discrepancy for the exceptional divisor $E'$ equals $\frac{K_X\cdot E'}{(E')^2}>0$, so $E'$ is contracted to a terminal singularity, i.e., a smooth point of the intermediate surface $Y$. Therefore, $Y$ is still smooth along the image $\mu_*(S_X+\Gamma_X+E_{1}+E_{2}) =: S_Y + \Gamma_Y + E_{1,Y}+E_{2,Y}$.

Arguing by contradiction, we show that $Y$ must be a Mori fiber space over a curve. Indeed, assume that a $K_Y$-MMP can yield a divisorial contraction. Its target is a Fano surface of Picard rank $1$.
\begin{itemize}
    \item If none of the four curves $S_Y,\Gamma_Y,E_{1,Y},E_{2,Y}$ is contracted by it, we can push-forward the pair on $Y$ to a log canonical pair on the final Fano surface. There, the presence of four components of coefficient one presents a contradiction with Proposition \ref{prop:toric-characterization}.
\item If one of the curves $S_Y,\Gamma_Y, E_{1,Y},E_{2,Y}$ is contracted by a divisorial contraction, by smoothness of $Y$ along the cycle, it has to be a $(-1)$-curve. Recall that $E_i$ is disjoint from $E'$, so $E_{i,Y}$ has square $-3$: We may not contract it. Neither can we contract the other $E_{j,Y}$: Indeed, the image of $E_{i,Y}$ on the final Fano surface of Picard rank $1$ would still have square $-3$. The only option left is to contract either $S_Y$ or $\Gamma_Y$. However, since $S_Y\cdot E_{i,Y} = \Gamma_Y\cdot E_{i,Y} = 1$, the image of $E_{i,Y}$ by the contraction of one of these two curves has square $-2$ on the final Fano surface of Picard number $1$. This is a contradiction.
\end{itemize}

This discussion shows that a $K_Y$-MMP's next step is a Mori fiber space $f\colon Y\rightarrow B$ to a rational curve. It further shows, in fact, that the Mori cone $\overline{\mathrm{NE}}(Y)$ is spanned by the general fiber $F$ of $f$ and the afore-mentioned curve $E_{i,Y}$ of square $-3$. Therefore, the other $E_{j,Y}$ cannot be of square $-3$: This means that the afore-mentioned unique point $p$ where $E'$ intersects $S_X+\Gamma_X+E_1+E_2$ belongs to $E_2$. By Lemma \ref{lem:Gor-MMP} again, we obtain that $E'$ is disjoint from $S_Y+\Gamma_Y$. Thus after contracting $E'$, the curves $S_Y$ and $C_Y$ still intersect transversally at exactly one smooth point in the surface $Y$.
Since $E_{i,Y}\cdot F\ge 1$ and $E_{j,Y}$ is disjoint from $E_{i,Y}$, we have $E_{j,Y}\cdot F\ge 1$ as well. Since it also holds
$(S_Y+\Gamma_Y+E_{1,Y}+E_{2,Y})\cdot F = 2$, we notice that $E_{1,Y},E_{2,Y}$ are sections of $f$, whilst $S_Y,\Gamma_Y$ are fibers. They are reduced irreducible fibers in fact, given that they both intersect our section $E_{1,Y}$ transversally in one smooth point. Finally, as $C_Y\cdot S_Y = 1$, it is a section of $f$ as well. 

We intend to apply Lemma~\ref{lem:Nori-reg-fun} to the pair $(Y,S_Y+E_{1,Y}+E_{2,Y}+\frac{1}{2}C_Y)$ at an intersection point $u$ of $C_Y$ and $E_{2,Y}$ (which exists since the two curves are numerically proportional to $E_{1,Y}+3F$, of positive square).
For that, the assumptions are clearly verified, and we are left to note that the group $$\pi_1^{\rm reg}\left(Y,S_Y+E_{1,Y}+E_{2,Y}+\frac{1}{2}C_Y;u\right)\simeq \pi_1^{\rm reg}\left(Y,E_{2,Y}+\frac{1}{2}C_Y;u\right)$$ 
is isomorphic to $\zz/2\zz\times\zz$, thus virtually abelian.
\end{proof}

We can now prove Proposition \ref{prop:1comp-sporadic233}.

\begin{proof}[Proof of Proposition \ref{prop:1comp-sporadic233}]
    Spelling out the adjunction formula (Lemma \ref{lem:coeff-under-adj}), we can classify the configurations of singular points of $Z$ and intersection points with $L_0$ on the smooth rational curve $S$. 
    There are three possible configurations:

\begin{center} 
\includegraphics[scale=0.3]{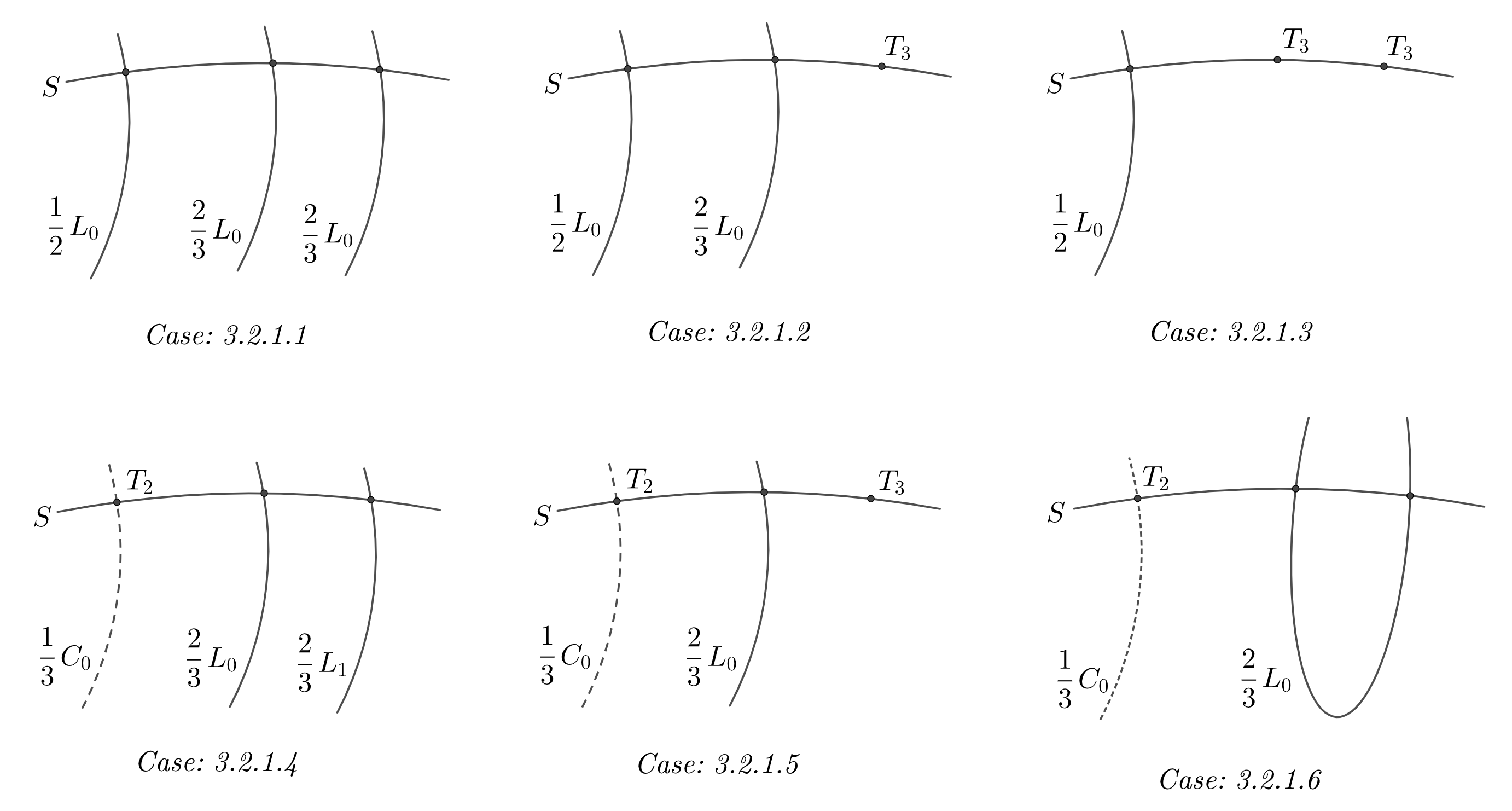}
\end{center}

In {\it Case 1.1}, Lemma~\ref{lem:t3-t3} concludes.

In {\it Cases 1.2 and 1.3}, we note that the pair $(S,\Delta_S)$ admits a $3$-complement (increase the coefficient $\frac{1}{2}$ to $\frac{2}{3}$), which by Lemma~\ref{lem:lifting-complements} lifts to a $3$-complement $(Z,S+\frac{m}{3}C_0+\frac{2}{3}L_0)$ with $m\in\{1,2,3\}$ and $C_0$ a curve that only intersects $S$ at the unique point of singular type $T_2$ on $S$. Checking the contribution of $C_0$ with the adjunction formula to $S$ (Lemma~\ref{lem:coeff-under-adj}), we see that $m=1$ and that $C_0$ intersects $S$ transversally. So the pair $(Z,S+C_0)$ is also log canonical. Since $L_0-C_0$ has positive intersection with $S$ and $\rho(Z)=1$, the divisor $L_0-C_0$ is ample, and so the pair $(Z,S+C_0)$ is also a Fano pair. Applying Proposition~\ref{prop:toric-characterization} to the log canonical Fano pair $(Z,S+C_0+\varepsilon L_0)$ for small enough $\varepsilon > 0$, we obtain that $Z$ is a toric surface, and $S$ and $C_0$ are contained in its toric boundary divisor. 

In {\it Case 1.2}, Lemmas~\ref{lem:toric-fund-group-1} and \ref{lem:fun-under-surj} yield the residual finiteness of $\pi_1^{\rm orb}(Z,S+L)$.

In {\it Case 1.3}, we take the standard $Z - S$ toric cover $p:X\to Z$, and use Remark \ref{rem:standardtoric} to identify $X$ as $\pp(1,1,2)$. We set $S_X:=p^*S$ and $L_{0,X} := p^*L_0$. Remark \ref{rem:standardtoric} further shows that $S_X$ and $L_{0,X}$ still intersect transversally at exactly two smooth points of $X$, and that $S_X$ contains the sole singular point of $X\simeq\pp(1,1,2)$, the coordinate point $[0:0:1]$. The pair $(X,S_X+\frac{2}{3}L_{0,X})$ remains a plt Fano pair, thus in the class group of $X$, we have two possibilities:
\begin{itemize}
    \item either $S_X\sim \mathcal{O}_{\pp}(1)$ and $L_{0,X}\sim \mathcal{O}_{\pp}(4)$;
    \item or $S_X\sim \mathcal{O}_{\pp}(2)$ and $L_{0,X}\sim \mathcal{O}_{\pp}(2)$.
\end{itemize}
(We also use the fact that $L_{0,X}$ does not contain $[0:0:1]$, thus has even degree.) 
We can rule out Case (ii): Since $S_X$ contains $[0:0:1]$, if it has degree two, its equation must be a linear combination of the monomials $x^2,xy,y^2$ in the variables of weight one. This contradicts the irreducibility of $S_X$.
Thus we are in Case (i). Blowing-up the point $[0:0:1]$, we obtain $\varepsilon:\mathbb{F}_2\to X$ with exceptional divisor $E$, and we denote by $S_{\mathbb{F}},L_{0;\mathbb{F}}$ the relevant strict transforms. The Hirzebruch surface $\mathbb{F}_2$ has a natural Mori fiber space structure $f:\mathbb{F}_2\to \pp^1$ with reduced fibers, of which $S_{\mathbb{F}}$ is a fiber, $E$ a section, and $L_{0;\mathbb{F}}$ a bisection. We apply Lemma \ref{lem:Nori-reg-fun} at a ramification point $u$ of the double cover of $L_{0,\mathbb{F}}$ to $\pp^1$ induced by $f$, and see that
$$\pi_1^{\rm reg}(\mathbb{F}_2, L_{0,\mathbb{F}} ; u)\simeq \pi_1^{\rm orb}(\mathbb{F}_2, S_{\mathbb{F}} + E + L_{0,\mathbb{F}}),$$
thus it is a cyclic group, and so $\pi_1^{\rm orb}(Z,S+L)$ is residually finite.
\end{proof}

\subsection{When $L$ has one component and the sporadic pair obtained is of type $(2,3,4)$}

The main result of this subsection is the following.

\begin{proposition}\label{prop:1comp-sporadic234}
Let $(Z,S+L)$ be a plt Fano surface pair, where
$S$ is a curve in $Z$, $L$ is a divisor with exactly one component and standard coefficients smaller than one, and $Z$ has Picard rank $1$. Assume moreover 
that the curve pair obtained by adjunction of $(Z,S+L)$ to $S$ is of type
$$(S,\Delta_S)\cong \left(\pp^1,\frac{1}{2}\{0\}+\frac{2}{3}\{1\}+\frac{3}{4}\{\infty\}\right).$$
Then the group $\pi_1^{\rm orb}(Z,S+L)$ is residually finite.
\end{proposition}

We prepare the proof of this proposition with some lemmas. The next lemma focuses on a very specific toric situation.

\begin{lemma}\label{lem:toric-fund-group-4}
Let $(Z,S+\frac{1}{2}C)$ be a plt Fano surface pair, where
$Z$ is a normal toric variety of Picard rank $1$, $S$ and $C$ are curves in $Z$, and $S$ is a component of the toric boundary. Assume that $S$ contains exactly two singular points of $Z$, $z_2$ and $z_3$, of respective orbifold indices $2$ and $3$, and that $C$ intersects $S$ at exactly two points, namely $z_2$ and a smooth point $z$, transversally at both.
Then the group  
$\pi_1^{\rm orb}(Z,S+\frac{1}{2}C)$ is virtually cyclic.
\end{lemma}

\begin{proof}
We take the standard $Z-S$ toric cover $p:X\to Z$ and note that by Remark \ref{rem:standardtoric}, $X$ is a normal toric surface of Picard rank $1$, with exactly two singular points $x_2$ and $x_3$ of respective orbifold indices $2$ and $3$, both lying on the toric boundary curve $S_X$. This identifies $X$ as the weighted projective space $\pp(1,2,3)$. 

We set $C_X:=p^*C$ and note that $p$ is compatible with the pairs $(X,S_X+\frac{1}{2}C_X)$ and $(Z,S+\frac{1}{2}C)$. We claim that $p$ is totally ramified along the curve $S_X$.
Indeed, we have a restriction $p|_{S_X}:S_X\to S$, which remains a toric finite Galois cover. We want to show that this restriction is an isomorphism. In terms of toric fans, this toric cover corresponds to the following operation: While $\Sigma_Z=\Sigma_X$ is the fan spanned by the vectors $\{v_1:=(1,0),v_2:=(0,1),v_3:=(-2,-3)\}$, we have a lattice $N_X = (1,0)\cdot \zz\oplus (0,1)\cdot \zz\subset N_Z \subset \rr^2$. In this set-up, the Galois group of $p|_{S_X}$ is isomorphic to the quotient 
$N_Z\cap v_3\cdot \rr/v_3\cdot \zz$
of the two available lattices on the ray spanned by $v_3$.
Let $n_i\in\nn$ be the largest positive integer such that $\frac{1}{n_i}v_i\in N_Z$, then the restriction $p|_{S_X}$ is an isomorphism if and only if $n_3=1$. In the curve $S$ on the toric surface $Z$, there are still exactly two singular points, of orbifold indices $2$ and $3$. This means that $\{v_1,v_3\}$ span $3N_Z$, and that $\{v_2,v_3\}$ span $2N_Z$. In particular, $\frac{3}{n_3}v_3 \in v_1\cdot\zz +v_3\cdot\zz$, which yields (looking at the second coordinate) $\frac{3}{n_3}\in\zz$. Similarly with $v_2$, we obtain that $n_3$ divides $2$. Since $2$ and $3$ are coprime, we conclude that $n_3=1$, as wished.
By the projection formula, we still have $S_X\cdot C_X =\frac{3}{2}$, and $S_X$ and $C_X$ intersect transversally at one smooth point of $X$ and at the singular point $x_2$. In particular, since $X$ has Picard rank $1$, the pre-image $C_X$ remains a single reduced and irreducible curve.



Let $Y$ be the normal toric surface of Picard rank $2$ of fan $\Sigma_Y$ spanned by $(1,0),(0,1),(-2,-3),(0,-1)$, obtained by a toric blow-up $\varepsilon:Y\to X$ with exceptional divisor $E$. Clearly, $Y$ has two singular points of type $A_1$: One lies at the intersection of the prime exceptional divisor $E$ and the strict transform $S_Y$ of $S_X$, the other at the intersection of $S_Y$ and of the strict trasnform $C_Y$ of $C_X$. From the fan of $Y$, we see that $(S_Y)^2=0$ and $-K_Y\cdot S_Y > 0$, so we have a Mori fiber space $f:Y\to\mathbb{P}^1$ contracting $S_Y$. Its fibers are the members of the pencil spanned by the Cartier divisor $2S_Y$ and by the strict transform $\ell_Y$ of the torus-invariant curve $\{y=0\}\subset \pp(1,2,3)$: They are all smooth, reduced and irreducible, except for $2S_Y$ itself; hence the induced curve pair on the base of $f$ is isomorphic to $(\pp^1,\{0\})$, which has trivial orbifold fundamental group.
Let $F$ be the general fiber of $f$ and let $C_Y$ be the strict transform of $C_X$. 
The center of the blow-up $\varepsilon$ avoids the curve $C_X$ altogether, so $C_Y\cdot F = 2\,  C_Y\cdot S_Y = 3$, i.e., $C_Y$ is a trisection of $f$. On the other hand, $E$ and $S_Y$ intersect transversally at a unique singular point of type $A_1$, so $E\cdot F = 2\,  E\cdot S_Y = 1$, i.e., $E$ is a section of $f$. 

To conclude, it suffices to show that, $\Delta:=S_Y+E_Y+\frac{1}{2}C_Y$, the orbifold fundamental group
$\pi_1^{\rm orb}(Y,\Delta)$ has a normal subgroup of finite index that is cyclic. We proceed by an argument similar to the proof of Lemma \ref{lem:Nori-reg-fun}, and slightly more general. Let $y\in Y$ be the unique smooth point at which the curves $S_Y$ and $C_Y$ intersect transversally. Let $U$ be an analytic ball in $Y$ centered at $y$, disjoint from $E$ and in which the two pieces of curves $S_Y$ and $C_Y$ are biholomorphic to the pieces of the coordinate axes. In particular, we have
\begin{equation}\label{eq:pi1U}
\pi_1^{\rm orb}(U,\Delta|_{U})\simeq \zz\times\zz/2\zz.
\end{equation}
Note that the triple cover $f|_{C_Y}:C_Y\to \pp^1$ ramifies with index two at the point $y$.
Let $p\in\pp^1$ be a point in the analytic open set $f(U)$ such that $F_p\cap C_Y$ consists in three distinct points, with exactly two of them contained in the neighborhood $U$.


We use Lemma \ref{lem:nori} and the pushforwards by the natural inclusions to draw a commutative diagram
\[
\xymatrix{
\pi_1^{\rm orb}(F_p\cap U,\Delta|_{F_p\cap U})\ar[r]^-{r}\ar[d]^-{\pi} & 
\pi_1^{\rm orb}(U,\Delta|_{U}) \ar[d]\\
\pi_1^{\rm orb}(F_p,\Delta|_{F_p})\ar@{->>}[r]^-{s} &
\pi_1^{\rm orb}(Y,\Delta).
}
\]

We note that $\pi_1^{\rm orb}(F_p,\Delta|_{F_p})$ is generated by three involutions, coming from the loops $c_1,c_2,c_3$ along the three distinct intersection points of $C_Y$ with $F_p$. Indeed, the fourth loop that should potentially be taken into account, the one around the intersection point of $E$ with $F_p$, can be written $e=c_1c_2c_3$ in $\pi_1^{\rm orb}(F_p,\Delta|_{F_p})$ using the fact that $F_p\simeq \pp^1$. Note that two of the three loops, say $c_1$ and $c_2$, can be chosen inside the analytic $U$, thus have preimages $\tilde{c}_1$ and $\tilde{c}_2$ by the group homomorphism $\pi$.

So $\pi_1^{\rm orb}(Y,\Delta_Y)$ is generated by three generators $s(\pi(\tilde{c}_1)),s(\pi(\tilde{c}_2)),s(c_3)$, which are each of order dividing two. Consider the classes of loops $r(\tilde{c}_1),r(\tilde{c}_2)\in \pi_1^{\rm orb}(U,\Delta|_{U})$, and recall that, by Equation \eqref{eq:pi1U}, the group $\pi_1^{\rm orb}(U,\Delta|_{U})$ has exactly one non-trivial element of order $2$. Hence, either one of the classes $r(\tilde{c}_1)$ or $r(\tilde{c}_2)$ is trivial in this group, or these two classes coincide, i.e., $r(\tilde{c}_1)=r(\tilde{c}_2)$. In either case, the subgroup of $\pi_1^{\rm orb}(Y,\Delta_Y)$ spanned by the two elements $s(\pi(\tilde{c}_1))$ and $s(\pi(\tilde{c}_2))$ is in fact already spanned by one of them alone. Thus, the group $\pi_1^{\rm orb}(Y,\Delta_Y)$ is spanned by zero, one, or two involutions, thus has a normal subgroup of index at most two that is cyclic, as wished.
\end{proof}

\begin{lemma}\label{lem:t2-t4}
Let $(Z,S+\frac{2}{3}C)$ be a plt Fano surface, where  $S$ and $C$ are curves in $Z$ and $Z$ has Picard rank $1$. Assume that $S\cap Z_{\rm sing}$ consists in two points, $x$ and $y$, of respective orbifold indices $2$ and $4$, and that $S\cap C$ consists in exacty one point $z$, distinct from $x$ and $y$, at which $S$ and $C$ intersect transversally.
Then the group 
$\pi_1^{\rm orb}(Z,S+\frac{2}{3}C)$ is residually finite.
\end{lemma}

\begin{proof}
We start in a similar way to the proof of Lemma \ref{lem:t3-t3}. We apply the adjunction formula (that is, Lemma \ref{lem:coeff-under-adj}) for the pair $(Z,S)$ with respect to the curve $S$. It  provides the curve pair $(S,\frac{1}{2}\{x\}+\frac{3}{4}\{y\})$, which admits a $1$-complement $(\pp^1,\{0\}+\{\infty\})$. By Lemma~\ref{lem:lifting-complements}, it extends to a $1$-complement $(Z,S+\Gamma)$. 
If $\Gamma$ has two components $\Gamma_0$ and $\Gamma_1$, then Proposition~\ref{prop:toric-characterization} shows that $Z$ is a normal toric surface, and $S+\Gamma$ is exactly the toric boundary divisor. Then, applying Lemma~\ref{lem:toric-fund-group-1} to the pair $(Z,S+\Gamma_0+C)$ concludes.

We now assume that $\Gamma$ has exactly one reduced irreducible component. Considering that the dual complex $\mathcal{D}(Z,S+\Gamma)$ contains a loop, by Remark \ref{rem:dual-complex}, it is piecewise linearly homeomorphic to the manifold $S^1$, thus $\Gamma$ contains no further log canonical center. Hence, the pair $(Z,S+\Gamma)$ is plt, and by adjunction to $\Gamma$, the surface $Z$ is smooth along $S\cup \Gamma \setminus\{x,y\}$. Since $K_Z+S+\Gamma\sim 0$, every point in $Z\setminus \{x,y\}$ is klt and Gorenstein, hence canonical. As for $x$ and $y$: The point $x$ is an $A_1$ singularity, and the point $y$ of index $4$ is a quotient singularity by a group $G$ of order $4$ acting faithfully and freely in codimension $1$. In particular, any element of order two in $G$ must be ${\rm diag}(-1,-1)$, so $G$ is cyclic and the singular point is either an $A_3$ singularity, or modelled after $\mathbb{C}^2/\langle{\rm diag}(\zeta_4,\zeta_4)\rangle$, which we call a $C_4$ singularity.

\medskip

If $y$ is a $C_4$ singularity, then the proof of Case 2 in Lemma \ref{lem:t3-t3} applies almost {\it verbatim.}

\medskip

We can thus focus on the case where $y$ is an $A_3$ singularity. Then, the surface $Z$ has canonical Gorenstein singularities. Since $Z$ is also a del Pezzo surface of Picard number one, by \cite[Lemma 3]{MZ88} the number and type of its singular points is prescribed as $2A_1+A_3, A_1+2A_3$, or $2A_1+2A_3$. 

If the singularities of $Z$ are of one of the types $A_1+2A_3$ or $2A_1+2A_3$, then by \cite[Lemma 6, Table I]{MZ88}, there is a finite Galois quasi-\'etale cover $q:\pp^1\times \pp^1\to Z$. Taking $S':=q^*S$ and $C':=q^*C$, by Proposition \ref{prop:pushforward}, it suffices to show that $\pi_1^{\rm orb}(\pp^1\times\pp^1,S'+\frac{2}{3}C')$ is residually finite. Since $\pp^1\times\pp^1$ is a clear Mori fiber space to a curve, Proposition \ref{prop:mfs} concludes.

Otherwise, the singularities of $X$ are of type $2A_1+A_3$, and by \cite[Lemma 6, Table I]{MZ88}, there is a quasi-\'etale double cover $q\colon X\rightarrow Z$ such that $\rho(X)=1$ and $X$ has a single $A_1$ singularity.
Let $S_X:=q^*S$ and $C_X:=q^*C$, so that
$(X,S_X+\frac{2}{3}C_X)$ remains a plt Fano surface pair of coregularity $1$. The single $A_1$ point of $X$ is contained in $S_X$, and away from $C_X$.

We run a $(K_{X}+S_X+\frac{2}{3}C_X)$-MMP. 
If it terminates with a Mori fiber space, then Proposition~\ref{prop:mfs} concludes.
Assume that it terminates with a klt Fano surface $Y$ of Picard rank one, underlying a log canonical Fano pair $(Y,S_Y+\frac{2}{3}C_Y)$ where $S_Y$, the image of $S'$, is a curve in $Y$ (since $S_Y$ is an lc center and $Y$ is klt), whilst $C_Y$, the image of $C'$, is a either curve or zero in $Y$. Note that the pair $(Y,S_Y+\frac{2}{3}C_Y)$ has no other log canonical center than the obvious one along $S_Y$: It thus is a plt pair.
If $C_Y$ is zero, we perform adjunction of $(Y,S_Y)$ to $S_Y$, provide a $1$-complement on $S_Y$ and lift it to $Y$ by Lemma \ref{lem:lifting-complements}, and conclude by Proposition \ref{prop:lc-cy}.
If $C_Y$ is non-zero, Proposition \ref{prop:1comp-sporadic233} concludes.
\end{proof}

We now prove Proposition \ref{prop:1comp-sporadic234}.

\begin{proof}[Proof of Proposition \ref{prop:1comp-sporadic234}]
Spelling out the adjunction formula (Lemma \ref{lem:coeff-under-adj}) of the pair $(Z,S+L)$ to $S$ allows to list the finitely many possible configurations:

\begin{center} 
\includegraphics[scale=0.3]{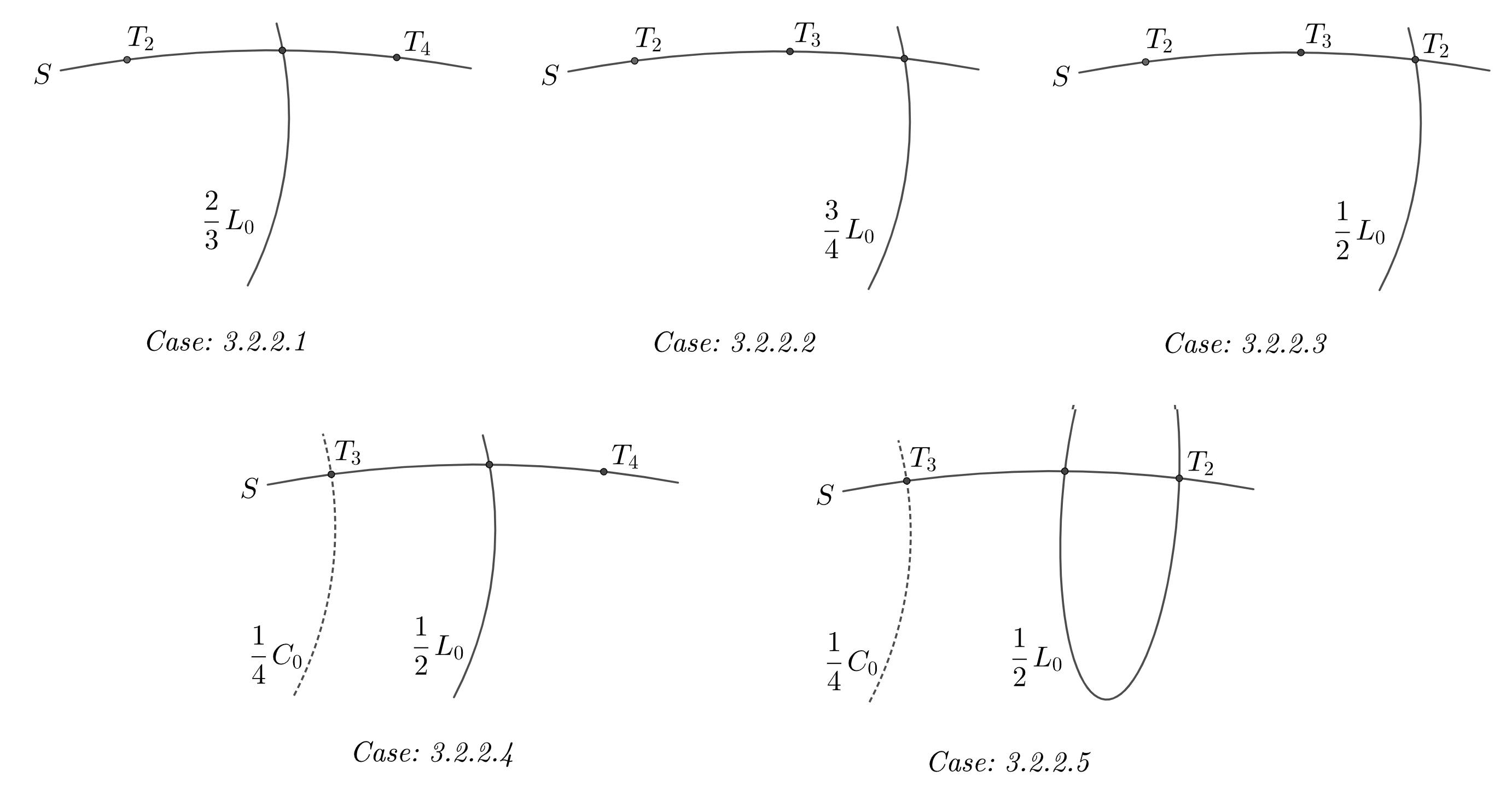}
\end{center} 

First of all, {\it Case 2.4} is handled by Lemma \ref{lem:t2-t4} directly.

In {\it Cases 2.1, 2.2, 2.3,} and {\it 2.5}, we use the fact that the pair $(S,\Delta_S)$ admits a $4$-complement (increasing the coefficient $\frac{2}{3}$ to $\frac{3}{4}$). By Lemma \ref{lem:lifting-complements}, it extends to a $4$-complement $(Z,S+L+\frac{1}{4}C_0)$, where $C_0$ is a curve that intersects $S$ transversally at its unique point of singular type $T_3$. Note that in {\it Cases 2.1, 2.3,} and {\it 2.5}, the pair $(Z,S+\frac{1}{2}L_0+\frac{1}{4}C_0)$ is Calabi--Yau, while in {\it Case 2.2}, it is Fano. Also note that $L_0-\frac{3}{2}C_0$ is ample in {\it Cases 2.1, 2.2, 2.3} and numerically trivial in {\it Case 2.5}.

Let us focus on {\it Cases 2.1, 2.2, 2.3} for a moment. There, we just showed that for small enough $\varepsilon>0$ the pair $(Z,S+\varepsilon L_0+C_0)$ is again a log canonical Fano pair. Thus by Proposition \ref{prop:toric-characterization}, the surface $Z$ is toric, and the curves $S$ and $C_0$ are in its toric boundary divisor. Applying Lemma~\ref{lem:toric-fund-group-1} concludes in {\it Cases 2.1} and {\it 2.2}. In {\it Case 2.3}, Lemma \ref{lem:toric-fund-group-4} concludes.

In {\it Case 2.5}, note that $(Z,S+C_0)$ is a log canonical Calabi--Yau pair of coregularity zero. By Lemma \ref{lem:coreg-vs-complementCY}, it is a $2$-complement. It cannot be a $1$-complement though, as $K_Z+S+C_0$ is not a trivial Cartier divisor locally near the singular points of type $T_2$ contained in $S$ and not in $C_0$.  By Lemma \ref{lem:index-one-cover}, we take the double cover $p:X\to Z$ associated to the torsion divisor class $K_Z+S+C_0$. This finite cyclic cover is quasi\'etale, and denoting $S_X=q^*S$ and $L_{0,X}=q^*L_0$, it is compatible with the log canonical Fano pairs $(X,S_X+\frac{1}{2}L_X)$ and $(Z,S+L)$. By Proposition \ref{prop:galois1}, it suffices to show that $\pi_1^{\rm orb}(X,S_X+\frac{1}{2}L_{0,X})$ is residually finite to conclude. We note that $S_X$ and $L_{0,X}$ intersect at exactly on point, that is smooth in $X$, that $S_X$ contains exactly two singular points of $X$ of type $T_3$, that the surface $X$ itself is klt, and that the log canonical centers of the pair $(X,S_X+\frac{1}{2}L_X)$ still occur only in the obvious way along $S_X$ (i.e., the dual complex to this pair is a single point). Running a $\left(K_X+S_X+\frac{1}{2}L_{0,X}\right)$-MMP, it now terminates with 
\begin{itemize}
    \item a Mori fiber space onto a curve, in which case Proposition \ref{prop:mfs} concludes;
    \item or a klt Fano surface $Z'$ of Picard rank one underlying a log canonical Fano pair of coregularity $1$. Then, since $Z'$ is klt, the log canonical center $S_X$ was not contracted, hence was sent to a curve $S'$ on $Z'$. Thus, the pair obtained is of the form $(Z',S'+\frac{1}{2}L')$ with $L'$ the image of $L_{0,X}$ in $Z'$, which is a reduced irreducible curve, or equals zero. This is a plt Fano pair, and the curve $S'$ still contains two singular points of type $T_3$ of $Z'$. Thus, if $L'$ is non-zero, Proposition \ref{prop:1comp-sporadic233} concludes. Finally, if $L'$ is zero, we perform adjunction of the pair $(Z',S')$ to $S'$, obtain a pair $(S',\frac{2}{3}u+\frac{2}{3}v)$ supported at the two singular points of type $T_3$ of $Z'$ in $S'$, and lift its natural $1$-complement by Lemma \ref{lem:lifting-complements} to a $1$-complement of $(Z',S')$. Proposition \ref{prop:lc-cy} and Lemma \ref{lem:fun-group-increasing-coeff} then conclude.
\end{itemize}

\end{proof}

\subsection{Proof of Proposition \ref{prop:lc-Fano-case}}\label{subsec:lc-Fano-surf}
We have now gathered everything to prove the main result of this section, Proposition \ref{prop:lc-Fano-case}.

\begin{proof}[Proof of Proposition \ref{prop:lc-Fano-case}]
If ${\rm coreg}(X,\Delta^{\rm st})=2$, then the pair $(X,\Delta^{\rm st})$ is klt, so $\pi_1^{\rm orb}(X,\Delta)$ is finite by~\cite[Theorem 2]{Bra20}. In particular, it clearly is residually finite, and Lemma \ref{lem:fano-herratcon} and Corollary \ref{cor:res-finite-for-surf} conclude. If ${\rm coreg}(X,\Delta^{\rm st})=0$, then by Lemma~\ref{lem:coreg-vs-complementFano}, we know that $(X,\Delta^{\rm st})$ admits a $2$-complement $(X,\Gamma)$. Then Corollary \ref{cor:2cplt-sur} and Proposition~\ref{prop:lc-cy} conclude.

From now on, we assume that ${\rm coreg}(X,\Delta^{\rm st})=1$.
By definition of the coregularity of a Fano pair, there exists a log canonical Calabi--Yau pair $(X,\Gamma)$ of coregularity $1$ such that $\Gamma\ge \Delta^{\rm st}$. 
Let $\varepsilon:(Y,\Gamma_Y)\rightarrow (X,\Gamma)$ be a dlt modification of the pair $(X,\Gamma)$, let $E$ be its reduced exceptional divisor.
The pair $(Y,E+\Gamma_Y)$ is still a log Calabi--Yau pair. Denoting by $\Delta_Y$ the strict transform of $\Delta$, we have $\pi_1^{\rm orb}(Y,E+\Delta_Y) \simeq \pi_1^{\rm orb}(X,\Delta)$. 
By~\cite[Theorem 1.1]{FS20} and by our assumption on coregularity, the divisor $E+\lfloor\Gamma_Y\rfloor$ has either one, or two disjoint components.

If $E$ itself has two disjoint components, then by \cite[Proposition 3.3.2]{Pro01c}, there is a contraction to a smooth curve $\phi:Y\to C$ with general fiber $\pp^1$ such that the two components of $E$ are disjoint sections of $\phi$. We can decompose it into a birational contraction $\mu: (Y,E+\Delta_Y)\to (Z,s_0+s_1+\Delta_Z)$ and a Mori fiber space $f:Z\to C$. Lemma \ref{lem:fun-under-surj} and Proposition \ref{prop:mfs} conclude.

From now on, we can assume that $E$ has exactly one component.
Starting with the plt pair $(Y,E+\Delta_Y)$, we run a $(K_Y+\Delta_Y+E)$-MMP.
If it terminates with a Mori fiber space onto a curve, Lemma \ref{lem:fun-under-surj} and Proposition \ref{prop:mfs} conclude again. Otherwise, it terminates with a klt Fano surface $Z$ of Picard rank one and a plt non-klt Fano pair $(Z,E_Z+\Delta_Z)$. 
Since $Z$ is $\qq$-factorial and has Picard rank $1$, and since the pair is plt non-klt, the divisor $E_Z+\Delta_Z^{\rm st}$ has exactly one component of coefficient one. We write $S+L := E_Z+\Delta_Z^{\rm st}$, where $S$ is a smooth curve in $Z$ by Remark \ref{rem:plt}, and $L$ is a $\qq$-divisor with standard coefficients smaller than one.
The pair $(Z,S+L)$ remains a plt Fano pair. 
By Lemmas \ref{lem:fun-under-surj}, \ref{lem:resfinimage} and \ref{lem:fano-herratcon} and Corollary \ref{cor:res-finite-for-surf}, it suffices to prove that the group $\pi_1^{\rm orb}(Z,S+L)$ is residually finite to conclude.

\medskip

By Lemma \ref{lem:coeff-under-adj}, we have a log canonical Fano curve pair $(S,\Delta_S)$ obtained by adjunction of $(Z,S+L)$ to $S$. By Lemma \ref{lem:lifting-complements}, if the pair $(S,\Delta_S)$ obtained by adjunction of $(Z,S+L)$ to $S$ admits a $2$-complement, then the pair $(Z,S+L)$ itself admits a $2$-complement, and Corollary \ref{cor:2cplt-sur} and Proposition \ref{prop:lc-cy} conclude. This concludes the proof whenever the curve pair $(S,\Delta_S)$ is of toric type. By adjunction, the pair $(S,\Delta_S)$ has strictly negative degree, hence is not of elliptic type. We are left with the case when $(S,\Delta_S)$ is of sporadic type, i.e., 
$$(S,\Delta_S)\cong \left(\pp^1,\frac{1}{2}\{0\}+\frac{2}{3}\{1\}+\frac{n-1}{n}\{\infty\}\right)$$
for some $n\in\{3,4,5\}$.
We will shortly use this terminology in a case disjunction.

\medskip

Prior to the case disjunction based on the value of $n\in\{3,4,5\}$, let us consider and treat some of the simpler options for the divisor $L$.
If $L=0$, then ~\cite[Corollary 1.6]{KM99} shows that $\pi_1(Z_{\rm reg}\setminus S)$ is virtually abelian, hence residually finite as wished. If $L$ has at least three components, then Proposition~\ref{prop:toric-fund-group-3comps} concludes. If $L$ has exactly two components, then Proposition \ref{prop:toric-fund-group-2comps} concludes. Finally, we assume that $L$ has exactly one component $L_0$. As mentioned previously, we can distinguish three cases depending on the sporadic type of the pair $(S,\Delta_S)$, and Propositions \ref{prop:1comp-sporadic233}, \ref{prop:1comp-sporadic234}, and \ref{prop:1comp-sporadic235} then conclude. 
\end{proof}

\section{Proof of the theorems}
\label{sec:proofs}

Note that Theorem~\ref{introthm:fun-group-klt} is proved in Section \ref{sec:klt-cy-stand}, and Theorem \ref{introthm:Jordan-dlt-Fano} is proved in Section \ref{sec:Jordan}.
In this section, we prove the other main theorems of the article.

\begin{proof}[Proof of Theorem~\ref{introthm:fun-group-lcy}]
Let $(X,\Delta)$ be a log canonical Calabi--Yau surface pair.
By Lemma~\ref{lem:dlt-mod-vs-fun}, taking a dlt modification leaves the orbifold fundamental group unchanged, so we may assume that $(X,\Delta)$ is dlt. In particular, the surface $X$ is klt.
We run a $K_X$-MMP: We denote by $X\rightarrow Y$ be the sequence of divisorial contractions. Note that $Y$ has klt singularities, and let us further discuss its geometry.

First, assume that $Y$ is a minimal model, i.e., $K_Y$ is nef. Since the pseudoeffective cone $\overline{\mathrm{NE}}(Y)$ contains the nef cone and is non-degenerate, the push-forward of $\Delta$ to $Y$ is numerically trivial, thus $K_Y$ is numerically trivial.
By Lemma~\ref{lem:fun-under-surj}, it suffices to prove the conclusion of Theorem \ref{introthm:fun-group-lcy} for the group $\pi_1^{\rm orb}(Y)$.
This follows from Theorem~\ref{introthm:fun-group-klt}.

Now, assume that $Y$ is a Mori fiber space (to a curve or a point).
By Lemma~\ref{lem:fun-under-surj},
it suffices to prove the conclusion of Theorem \ref{introthm:fun-group-lcy} for the pair $(Y,\Delta_Y)$, where $\Delta_Y$ is the push-forward of $\Delta$ in $Y$.
If $Y$ is a Mori fiber space to a curve, then Proposition~\ref{prop:mfs} concludes.
Otherwise, $Y$ is a Mori fiber space to a point, i.e., a klt Fano surface of Picard rank $1$. If $\Delta_Y$ has standard coefficients, then Proposition~\ref{prop:lc-cy} concludes. If $\Delta_Y$ does not have standard coefficients, then the approximation $(Y,\Delta_Y^{\rm st})$ is a log canonical Fano pair, and Proposition~\ref{prop:lc-Fano-case} concludes.
\end{proof}

\begin{proof}[Proof of Theorem~\ref{introthm:fun-group-not-virt-ab}]
Let $(X,\Delta)$ be a log canonical Calabi--Yau surface pair for which the group $\pi_1^{\rm orb}(X,\Delta)$ is not virtually abelian. We follow the strategy of proof of Theorem \ref{introthm:fun-group-lcy}: By Lemma~\ref{lem:dlt-mod-vs-fun}, we may assume that $(X,\Delta)$ is dlt. In particular, the surface $X$ is klt.
We run a $K_X$-MMP: We denote by $X\rightarrow Y$ be the sequence of divisorial contractions. Note that $Y$ has klt singularities, and let us further discuss its geometry.

If $Y$ is a minimal model, by Lemma~\ref{lem:fun-under-surj}, we obtain the conclusion of Theorem \ref{introthm:fun-group-klt} for the group $\pi_1^{\rm orb}(X,\Delta)$: It is virtually abelian, a contradiction to our initial assumption.

Now, assume that $Y$ is a Mori fiber space (to a curve or a point) and denote by $\Delta_Y$ the push-forward of $\Delta$ in $Y$.
If $Y$ is a Mori fiber space to a curve, then Proposition~\ref{prop:mfs} concludes with the wished geometric description.

Assume finally that $Y$ is a Mori fiber space to a point, i.e., a klt Fano surface of Picard rank $1$. If $\Delta_Y$ has standard coefficients, then the 'Furthermore' part of Proposition~\ref{prop:lc-cy} (together with Proposition \ref{prop:mfs}) concludes. If $\Delta_Y$ does not have standard coefficients, then the approximation $(Y,\Delta_Y^{\rm st})$ is a log canonical Fano pair, and the 'Moreover' part of Proposition~\ref{prop:lc-Fano-case} (together with Proposition \ref{prop:mfs}) concludes.
\end{proof}

\begin{proof}[Proof of Theorem~\ref{introthm:fun-group-Z4}]
If $(X,\Delta)\simeq (A/G,{\rm Branch}(p))$ as in the statement of Theorem \ref{introthm:fun-group-Z4}, then Proposition \ref{prop:galois1} shows that $\zz^4$ appears as a normal subgroup of finite index in $\pi_1^{\rm orb}(X,\Delta)$. 

Conversely, assume that $\pi_1^{\mathrm{orb}}(X,\Delta)$ contains a subgroup isomorphic to $\zz^4$. 
As in the proof of Theorem~\ref{introthm:fun-group-lcy}, after taking a dlt modification $(X',\Delta')\rightarrow (X,\Delta)$ we run a $K_{X'}$-MMP to get a pair $(Y,\Delta_Y)$. Recall that $\pi^{\rm{orb}}(Y,\Delta_Y)$ surjects onto $\pi_1^{\rm{orb}}(X,\Delta)$ (by Lemma~\ref{lem:dlt-mod-vs-fun} and Lemma~\ref{lem:fun-under-surj}). Hence, $\pi_1^{\rm{orb}}(Y,\Delta_Y)$ also contains a normal subgroup isomorphic to $\mathbb{Z}^4$. Proposition~\ref{prop:mfs}, Proposition~\ref{prop:lc-cy} and Proposition~\ref{prop:lc-Fano-case} combined with the strategy of proof of Theorem \ref{introthm:fun-group-lcy} imply that $(Y,\Delta_Y)$ is a klt Calabi--Yau pair with $\Delta_Y=0$. Clearly, this means that $(X,\Delta)$ already was a klt Calabi--Yau pair. As a klt surface, $Y$ has isolated quotient singularities, which always have rational discrepancies, so $\Delta$ is a $\qq$-Weil divisor. 

We now follow the proof of Theorem \ref{introthm:fun-group-klt}: By the non-vanishing result \cite[V.4.9 Corollary]{Nak04}, there is a positive integer $M$ such that $M(K_{X}+\Delta)\sim 0$, hence the index one cover of $(X,\Delta)$ given by Lemma \ref{lem:index-one-cover} is a klt Gorenstein surface $X_1$ with $K_{X_1}\sim 0$. In particular, it has canonical singularities. Take the minimal resolution of singularities $r:A\rightarrow X_1$. Then $A$ is a smooth K3 surface or an abelian surface.
We still have a normal subgroup isomorphic to $\zz^4$ appearing in $\pi_1(A)$, thus $A$ is an abelian surface. It contains no rational curve, thus $A\simeq X_1$. Let $G$ be the Galois group of the index one cover $p: A\simeq X_1 \to (X,\Delta)$. We then obtain $(X,\Delta)=(A/G,\mathrm{Branch}(p))$, as wished.
\end{proof}

\begin{proof}[Proof of Theorem~\ref{introthm:fun-group-Z3}]
We proceed as in the proof of Theorem~\ref{introthm:fun-group-Z4}. First, we take a dlt modification $(X',\Delta')\rightarrow (X,\Delta)$, then run a $K_{X'}$-MMP to get to a pair $(Y,\Delta_Y)$. By Lemmas~\ref{lem:dlt-mod-vs-fun} and \ref{lem:fun-under-surj}, we have a surjection of fundamental groups: 
$\pi^{\rm{orb}}(Y,\Delta_Y)\twoheadrightarrow\pi_1^{\rm{orb}}(X,\Delta)$.

If the group $\pi_1^{\mathrm{orb}}(Y,\Delta_Y)$ contains a copy of $\zz^4$, then the proof of Theorem~\ref{introthm:fun-group-Z4} still works; It shows that $(X,\Delta) =(A/G,{\rm Branch}(p))$ for a finite quotient $:p:A\to A/G$ of an abelian surface. Thus, by Theorem~\ref{introthm:fun-group-Z4} again, $\pi_1^{\rm orb}(X,\Delta)$ contains a copy of $\zz^4$ too. That contradicts our assumption.

Thus, the group $\pi_1^{\mathrm{orb}}(Y,\Delta_Y)$ contains no copy of $\zz^4$. We also note that $\pi_1^{\mathrm{orb}}(Y,\Delta_Y)$ cannot admit a normal subgroup of finite index that has torsion-free rank at most $2$, since it surjects onto a group that contains a copy of $\zz^3$. Thus the pair $(Y,\Delta_Y)$ cannot satisfy both Hypotheses $(1)$ and $(2)$ of Corollary~\ref{cor:res-finite-for-surf}. By Theorem \ref{introthm:fun-group-lcy} however, the group $\pi_1^{\mathrm{orb}}(Y,\Delta_Y)$ always is residually finite. Thus the pair $(Y,\Delta_Y)$ cannot be hereditarily rationally connected.

\medskip

\noindent\underline{\textit{Case a):}}
The surface $Y$ is a Mori fiber space onto a curve. 

\medskip

\noindent If $(Y,\Delta_Y)$ is nevertheless hereditarily uniruled, then applying Lemma~\ref{lem:sarkisovlink} and Proposition \ref{prop:ellipticbase} shows that $\pi_1^{\mathrm{orb}}(Y,\Delta_Y)$ has a normal subgroup of finite index that is abelian of rank at most $2$, or that is a quotient of a Heisenberg group $H_k$ for $k\ge 0$. Note that for $k\ge 1$, the group $H_k$ cannot surject to any group that contains a copy of $\zz^3$. It is easy to check from there that $\pi_1^{\mathrm{orb}}(Y,\Delta_Y)$ surjects to a group containing a copy of $\zz^3$ if and only if it contains a copy of $H_0\simeq \zz^3$. Proposition \ref{prop:mfs} concludes that the pair $(Y,\Delta_Y)$ has the geometry predicted by Theorem \ref{introthm:fun-group-Z3}.

If $(Y,\Delta_Y)$ is not hereditarily uniruled, we follow the proof of Proposition \ref{prop:mfs} and obtain that $(Y,\Delta_Y)$ is a klt Calabi--Yau pair with standard coefficients. By the proof of Theorem \ref{introthm:fun-group-klt}, and since $\pi_1^{\rm orb}(Y,\Delta_Y)$ contains no copy of $\zz^4$, the pair $(Y,\Delta_Y)$ has a compatible finite cover $(Y_0,0)$ such that $Y_0$ has a smooth K3 surface as its minimal resolution. Thus, $\pi_1({Y_0}_{\rm reg})=\{1\}$, and thus $\pi_1(Y,\Delta_Y)$ is finite, a contradiction.

\medskip

\noindent\underline{\textit{Case b):}}
The pair $(Y,\Delta_Y)$ is a non-klt log canonical Calabi-Yau pair with standard coefficients, i.e., we are in the situation of  Proposition~\ref{prop:lc-cy}. 

\medskip

Looking at the proof of Proposition~\ref{prop:lc-cy}, the so-called ``Case 1'' is the only possibility, since the pair $(Y,\Delta_Y)$ is not hereditarily rationally connected. From the discussion of that ``Case 1'' in the proof of Proposition~\ref{prop:lc-cy}, we can find a log canonical Calabi--Yau pair that is birationally equivalent to $(Y,\Delta_Y)$, whose orbifold fundamental group surjects onto that of $(Y,\Delta_Y)$, but whose surface is a Mori fiber space to a curve. Then, we can conclude using {\it Case a)} right above.

\medskip

\noindent\underline{\textit{Case c):}}
The pair $(Y,\Delta_Y^{\rm st})$ is a log canonical Fano pair, i.e., we are in the situation of Proposition~\ref{prop:lc-Fano-case}. 

\medskip

Again, since $(Y,\Delta_Y)$ is not hereditarily rationally connected, we can skim through the proof of Proposition~\ref{prop:lc-Fano-case} and see that ${\rm coreg}(Y,\Delta_{Y}^{\rm st})=0$. As in the proof of Proposition~\ref{prop:lc-Fano-case}, the pair $(Y,\Delta_{Y}^{\rm st})=0$ admits a $2$-complement $(Y,\Gamma)$ of coregularity zero, which is now covered by {\it Case b)}. What {\it Case b)} shows here, is that the pair $(Y,\Gamma)$ is birationally equivalent to a pair which, after a compatible finite cover, is of the form
$$(\mathbb{P}(\mathcal{O}_E\oplus M),s_0+s_{\infty}),$$ 
where $E$ is an elliptic curve, $M$ is a numerically trivial line bundle on $E$, and $s_0,s_{\infty}$ are two sections of this $\pp^1$-bundle.
The fact that $\Delta_Y^{\rm st} <\Gamma$ however forces a group of the form
$\zz/n\zz\times \zz^2$ (obtained by strictly decreasing some coefficients of the divisor $s_0+s_{\infty}$) to be normal of finite index in a group that surjects to $\pi_1^{\rm orb}(Y,\Delta_Y^{\rm st})$, thus to $\pi_1(X,\Delta)$. This contradicts the fact that $\pi_1(X,\Delta)$ contains a copy of $\zz^3$, thus {\it Case c)} cannot occur.
\end{proof}

\begin{proof}[Proof of Theorem~\ref{introthm:fun-group-inf}]
Let $(X,\Delta)$ be a log Calabi--Yau surface for which $\pi_1^{\rm orb}(X,\Delta)$ is infinite. Let us fix a positive integer $N_0$ such that $N_0\Delta$ is an integral Weil divisor. Note that if $\pi_1^{\rm orb}(X,\Delta)$ is not virtually abelian, then Theorem \ref{introthm:fun-group-not-virt-ab} already concludes as wished. We can thus assume that it is virtually abelian. By Theorem \ref{introthm:fun-group-lcy} and Proposition \ref{prop:galois2}, there is a compatible finite Galois cover
$p:(Y,\Delta_Y)\to (X,\Delta)$ such that $\pi_1^{\rm orb}(Y,\Delta_Y)$ is of the form $\zz^r$, for some integer $r=1,2,3,$ or $4$. Since Theorems \ref{introthm:fun-group-Z4} and \ref{introthm:fun-group-Z3} deal with the cases $r=3,4$, we can now assume that $r=1$ or $2$. In particular, we obtain natural surjections $s_n:\pi_1^{\rm orb}(Y,\Delta_Y)\twoheadrightarrow (\zz/n\zz)^r$ for every positive integer $n$. By Proposition \ref{prop:galois2}, there are corresponding compatible finite cyclic covers $q_n:(Y_n,\Delta_n)\to (Y,\Delta_Y)$ for all $n$. Moreover, note that $q_n$ factors through $q_m$ if and only if $m$ divides $n$.

Fix $n$. Let $Y_n\to Y'_n$ be the outcome of a dlt resolution, then a $(\zz/n\zz)^r$-equivariant $K_{Y_n}$-MMP. Denote by $(Z_n,\Gamma_n)$ the quotient of $(Y'_n,\Delta'_n)$ by the action of $(\zz/n\zz)^r$. 
The group $\pi_1^{\rm orb}(Y'_n,\Delta'_n)$ surjects onto $\pi_1^{\rm orb}(Y_n,\Delta_n)\simeq \zz^r$. 

If $K_{Z_n}$ nef, then $\Gamma_n=0$ and the pair $(Z_n,0)$ satisfies the assumptions of Theorem \ref{introthm:fun-group-klt}. Thus, since it has infinite orbifold fundamental group, it has a normal subgroup of finite index isomorphic to $\zz^4$, and Theorem \ref{introthm:fun-group-Z4} concludes. 

Next, we consider the case when $Y'_n$ is a $(\zz/n\zz)^r$-Mori fiber space onto a point. Then, the klt surface $Y'_n$ is clearly of Fano type. The pair $(Y'_n,\lfloor \Delta'_n\rfloor)$ is a log canonical Calabi--Yau or Fano pair, that still has infinite orbifold fundamental group. If it is klt, then we conclude easily by Theorem \ref{introthm:fun-group-Z4} (in the Calabi--Yau case) and \cite{Bra20} (in the Fano case). Otherwise, it has an $N$-complement for some integer $N\le 6$. The result \cite[Theorem 3.2]{Mor20b} provides an integer $M\in\nn$ such that for $n\ge M$, after a suitable birational equivalence, we have a $(\cc^*)^r$-action on $Y'_n$ that commutes with the action of any subgroup of $(\zz/n\zz)^r$ of index at most $M$, and a $(\cc^*)^r$-invariant divisor $B_n\ge \lfloor\Delta'_n\rfloor$ such that $(Y'_n,B_n)$ is a log canonical Calabi--Yau pair. This concludes this case.

Finally, assume that $Y'_n$ is a $(\zz/n\zz)^r$-Mori fiber space onto a curve. If the base curve pair is of elliptic type, then we are done. Otherwise, the fibration is of the form $f:(Y'_n,\Delta'_n)\to (\pp^1,B_n)$, with a curve pair $(\pp^1,B_n)$ that is of toric or sporadic type. The action of $(\zz/n\zz)^r$ commutes with $f$, inducing an action on the base pair $(\pp^1,B_n)$ as well as one the general fiber $(F,\Delta_n|_F)$ of $f$.

Assume that, for some $n$ that is divisible by $4N_0$, we have $\zz/n\zz$ acting faithfully on the base pair $(\pp^1,B_n)$. Then, up to possibly a compatible double cover, the action on $\pp^1$ is by multiplication by a root of unity of order at least $2N_0$, and $B_n$ is supported on its two fixed points $0,\infty\in\pp^1$. Since the compatible cyclic quotient recovers a curve pair with coefficients in $\frac{1}{N_0}\zz$ (by definition of $N_0$), by the ramification formula, we see that $B_n=\{0\}+\{\infty\}$. By the canonical bundle formula, the pair $(Y'_n,\Delta'_n)$ admits a log canonical center above the point $p\in\pp^1$. In particular, after a $\zz/(n/N_0)\zz$-equivariant elementary transformation, $\Delta'_n$ contains vertical components above $0$ and $\infty$. Since $F\cdot \Delta'_n = 2$, the divisor $\Delta'_n$ also has horizontal components with non-zero coefficients, of course. This shows that $Y'_n$ is of Fano type, and then \cite[Theorems 3.1 and 3.2]{Mor20b} conclude.

Finally, assume that, for some $n$ that is divisible by $4N_0$, we have $\zz/n\zz$ acting faithfully on the fiber pair $(F,\Delta'_n|_F)$. By a similar argument as above (see also \cite[Proof of Proposition 2.24]{Mor20b}), we see that $\Delta'_n$ contains two horizontal components with coefficient $1$. If they intersect, then up to a $\zz/n\zz$-equivariant elementary transformation, the divisor $\Delta'_n$ also contains a vertical component, thus $Y'_n$ is of Fano type, and \cite[Theorems 3.1 and 3.2]{Mor20b} conclude. If the two horizontal components of $\Delta'_n$ are disjoint, then $Y'_n$ is birational to a smooth Hirzebruch surface, with $\Delta'_n$ being the sum of the minimal section and a disjoint positive section, thus the existence of a $(\cc^*)^2$-action preserving $\Delta'_n$, as wished. This concludes this proof.
\end{proof}

\section{Examples and Questions}

In this section, we collect some interesting examples and questions related to the main results.

\subsection{Failure of virtual abelianity in Theorem~\ref{introthm:fun-group-lcy}}

\begin{example}\label{ex:circle-over-elliptic}
{\em 
In this example, we compute the fundamental group of all the principal $S^1$-bundles over an elliptic curve $E$.

First note that for any smooth real manifold $M$, its classes of isomorphic principal $S^1$-bundles is $\mathrm{H}^2(M,\mathbb{Z})$. In particular, the isomorphic class of principal $S^1$-bundles of an elliptic curve $E$ is $\mathrm{H}^2(E,\mathbb{Z})\cong \mathbb{Z}$.

Topologically $E$ is obtained by gluing a disk $\mathbb{D}$ to $S^1\vee S^1$. Set $U\subset E$ to be the interior of the disc and $V\subset E$ be an open neighborhood of $S^1\vee S^1$ such that $V$ is homotopical to $S^1\vee S^1$ and there is a deformation retraction $r:U\cap V\rightarrow S^1$.
Now for each $k\in \mathbb{Z}$ we set $B_k$ to be the $S^1$-bundle over $E$ by the transition map $t_k:(U\cap V)\times S^1\rightarrow (U\cap V)\times S^1$, 
defined by $(x,\theta)\rightarrow (x,\theta+kr(x))$, where $S^1$ is regarded as an additive group. 

We apply van Kampen theorem to the open cover $\{U\times S^1, V\times S^1\}$ of $B_k$. We get that $\pi_1(V\times S^1)=\pi_1((S^1\vee S^1)\times S^1)=(\mathbb{Z}\ast\mathbb{Z})\times \mathbb{Z}$ with generators $a,b,c$ corresponding to the first, second and third factor, $\pi_1(U\times S^1)=\mathbb{Z}$ with generator $d$,  and $\pi_1((U\cap V)\times S^1)=\pi_1(S^1\times S^1)=\mathbb{Z}\times \mathbb{Z}$ with generators $x,y$ corresponding to the first and second factor. We may further assume that $r$ retracts $U\cap V$ to a representative of $x$. Now regard $(U\cap V)\times S^1$ as a subspace of $V\times S^1$ being glued to $U\times S^1$ by $t_k$. Then in $(U\cap V)\times S^1$, the inclusion morphism send $x$ to the commutator $[a,b]$ and $y$ to $c$. Via $t_k$, the element $x$ is mapped to $kd$ and $y$ is mapped to $d$. We thus get a presentation of $\pi_1(B_k)=\langle a,b,c,d|[a,b]d^{-k},[a,c],[b,c],cd^{-1}\rangle$ which can then be simplified to be 
\begin{equation*}
    \pi_1(B_k)=\langle a,b,c|[a,b]c^{-k},[a,c],[b,c]\rangle.
\end{equation*}
The long exact sequence of the fibration $B_k\rightarrow E$ now has the form:
\begin{center}
    $1\rightarrow \langle c\rangle \rightarrow \langle a,b,c|[a,b]c^{-k},[a,c],[b,c]\rangle \rightarrow \langle a,b|[a,b] \rangle \rightarrow 1$,
\end{center}
where the second arrow maps $c$ to the neutral element in $\mathbb{Z}^2$. Thus we see that for any $u,v\in \pi_1(B_k)$ its commutator $[u,v]$ lies in $\langle c\rangle$. Hence the derived length of $\pi_1(B_k)$ is at most $2$, i.e., $\pi_1(B_k)$ is metabelian.

If $k\neq 0$, then $\pi_1(B_k)$ is not virtually abelian. Indeed, first note that \begin{equation}
\label{commutator}
  [a^m,b^m]=(a^mba^{-m})^mb^{-m}=(c^{mk}b)^mb^{-m}=c^{km^2}   
\end{equation}

for any positive integer $m$. Suppose that $H\leqslant \pi_1(B_k)$ is an abelian normal subgroup of finite index $m$, then its image $H'$ in $\langle a,b|[a,b] \rangle=\mathbb{Z}^2$ is also of finite index $m$. In particular $(m,0)\in H'$ and $(0,m)\in H'$. Thus for some integers $u,v$, the elements $a^mc^u$ and $b^mc^v$ belong to $H$. However their commutator is $[a^mc^u,b^mc^v]=[a^m,b^m]=c^{km^2}\neq 1$. Hence $\pi_1(B_k)$ is not virtually abelian.

}
\end{example}

\begin{example}\label{ex:p1-over-elliptic}
{\em Let $E$ be an elliptic curve and $L$ be a degree $k\in \mathbb{Z}$ line bundle over $E$. The projective bundle $X_k:=\mathbb{P}(\mathcal{O}\oplus L)\rightarrow E$ has two disjoint sections $s_0$ and $s_{\infty}$ corresponding to the two factors of $\mathcal{O}\oplus L$. The pair $(X,s_0+s_\infty)$ is log Calabi--Yau.

First we claim that $X_k\setminus (s_0\cup s_\infty)$ is homotopical to the $S^1$-bundle $B_k$ in Example~\ref{ex:circle-over-elliptic}. Indeed, we cover $E$ by open subsets $\{U_\alpha\}$ such that on each $U_{\alpha}$ there is a trivialization $\phi_{\alpha}:L|_{U_{\alpha}}\cong U_\alpha\times \mathbb{C}$. We denote by $\phi_{\alpha\beta}:=\phi_{\alpha}\circ \phi_{\beta}^{-1}$ the transition function. We can also normalize the transition function. Let $h$ be a Hermitian metric over $L$ and denote by $h_\alpha:U_\alpha\rightarrow \mathbb{R}^+$ its corresponding positive function on $U_\alpha$. Now consider the trivialization of $L$ as a complex vector bundle $\tilde{\phi}_\alpha(v_x):=h_\alpha(x)\phi_\alpha(v_x)$. The transition functions become $\tilde{\phi}_{\alpha\beta}=\phi_{\alpha\beta}{\vert\phi_{\alpha\beta}^{-1}\vert}$. The transition function for the $\mathbb{P}^1$-bundle $X_k=\mathbb{P}(\mathcal{O}\oplus L)$
is given by the class of 
\[
\begin{pmatrix}
1 & 0 \\
0 & \phi_{\alpha\beta}^{-1}
\end{pmatrix}
\text{ in } \mathrm{PGL}(1,\mathbb{C}).
\]
The two sections $s_0$ and $s_\infty$ in each $\mathbb{P}^1\cong S^2$-fiber correspond to the north and south poles. Hence after deformation retracting to an $S^1$-bundle $B$, the transition functions become $\{\tilde{\phi}_{\alpha\beta}^{-1}\}$.
Consider now the following two exponential sequences:
\begin{center}
    $\xymatrix{
0\ar[r]\ar[d] & \mathbb{Z} \ar[r]\ar[d]&\mathcal{C}_{\mathbb{R}}\ar[d]\ar[r] & \mathcal{U}\ar[r]\ar[d] & 0 \ar[d]\\
0\ar[r] & \mathbb{Z} \ar[r] & \mathcal{C}_{\mathbb{C}}\ar[r] &\mathcal{C}_{\mathbb{C}}^{\ast}\ar[r] & 0}$,
\end{center}
where $\mathcal{U}$ is the sheaf of germs of complex-valued smooth functions of norm $1$. Take the cohomology, we get the following diagram
\begin{center}
    $\xymatrix{H^1(E,\mathcal{U})\ar[r]\ar[d] & H^2(E,\mathbb{Z}) \ar[d]^{{\rm id}}\\
    H^1(E,\mathcal{C}_{\mathbb{C}}^{\ast})\ar[r]^\delta & H^2(E,\mathbb{Z})}$.
\end{center}
The upper row is an isomorphism that maps a cocycle of a $S^1$-bundle $B$ to its characteristic class; the lower row maps a cocycle of a complex line bundle $L$ to $\delta(L)=-c_1(L)$ (\emph{cf.}~\cite[Proposition~4.4.12]{Huy05}). In particular, beginning with the cocycle $\{\tilde{\phi}_{\alpha\beta}^{-1}\}$, we get $\int_E-c_1(L^\ast)=k$. Hence $X_k\setminus (s_0\cup s_\infty)$ is homotopic to $B_k$.

Thus $\pi_1^{\rm orb}(X,s_0+s_\infty)=\langle a,b,c|[a,b]c^{-k},[a,c],[b,c]\rangle$, which is nilpotent of length 2, and virtually abelian if and only if $k=0$.
}
\end{example}

The following example shows that
even if the orbifold fundamental group of a klt log Calabi--Yau surface is virtually abelian, it is not effectively
virtually abelian.
\begin{example}\label{ex:p1-over-elliptic-standard}
{\em 
Let $X_k,s_0$ and $s_\infty$ be the $\mathbb{P}^1$-bundle defined in Example~\ref{ex:p1-over-elliptic}. 
Note that $s_\infty-K_{X_k}\sim s_0+2s_\infty$ is nef and big and $s_\infty$ is nef and big. By base point free theorem there exists $N\gg 0$ such that $|Ns_\infty|$ has no base point. Let $D\in |Ns_\infty|$ be a general member. Then $K_X+s_0+(1-\frac{1}{m})s_\infty+\frac{1}{mN}D\sim 0$. Hence $(X_k,s_0+(1-\frac{1}{m})s_\infty+\frac{1}{mN}D)$ is log Calabi--Yau. Note that by definition 
\[
G_m:=\pi_1^{\rm orb}\left(X_k,s_0+\left(1-\frac{1}{m}\right)s_\infty+\frac{1}{mN}D\right)=\pi_1^{\rm orb}\left(X_k,s_0+\left(1-\frac{1}{m}\right)s_\infty\right).
\]
Let $\gamma_\infty$ be a meridian loop $s_\infty$. Then $G_{m}=\pi_1^{\rm orb}(X_k,s_0+s_\infty)/N$, where $N$ is the normal sub-group generated by $\gamma_\infty^m$. We represent $\pi_1^{\rm orb}(X_k,s_0+s_\infty)=\pi_1(B_k)=\langle a,b,c|[a,b]c^{-k},[a,c],[b,c]\rangle$ as in Example~\ref{ex:circle-over-elliptic}. Note that $c$ comes from an $S^1$-fiber over $E$. Hence $\gamma_\infty$ will have $c$ as image in $\pi_1(B_k)$. Thus we get 
\begin{equation*}
    \pi_1^{\rm orb}\left(X_k,s_0+\left(1-\frac{1}{m}\right)s_\infty\right)=\langle a,b,c|[a,b]c^{-k},[a,c],[b,c],c^{m}\rangle.
\end{equation*}

 Consider the subgroup $A$ generated by $a^m$ and $b^m$. By Equation~\ref{commutator} one has $[a^m,b^m]=c^{km^2}=1$. One easily checks that $[a,b^m]=c^{-mk}=1$ and $[b,a^m]=c^{mk}=1$. Hence $A$ is a normal abelian subgroup of $G_{m}$ with finite index. Hence $G_m$ is virtually abelian. 

However, here for the family $(G_m)_{m\in \nn}$ we do not have effective virtual abelianity. In other words, we can not find a universal constant $C$ such that each group $G_{m}$ has a normal abelian subgroup of finite index at most $C$.
Indeed, suppose that $A'\leqslant G_{m}$ is the normal abelian subgroup of smallest index $l$. The image of $A'$ in $\mathbb{Z}^2$ is a subgroup of finite index $l$. The argument in the end of Example~\ref{ex:circle-over-elliptic} shows that $[a^l,b^l]=c^{kl^2}$. Hence we have that $l\geq \sqrt{\frac{m}{k}}$.

}
\end{example}

\subsection{Sharp and otherwise large indices}

The next example realizes the index 3840 appearing in Theorem \ref{introthm:fun-group-klt}, and is due to Kondo \cite{Kondo99}.

\begin{example}\label{ex:kondo}
    Consider the Kummer surface $S$ obtained from the abelian surface $E_i\times E_i$, where $i$ is the usual fourth root of unity and $E_i=\cc/\zz[i]$. Then \cite[Lemma 1.2]{Kondo99} shows that an extension $G$ of $\zz/4\zz$ by Mathieu group $M_{20}$ acts faithfully on $S$. Let $p=S\to S/G$ denote the quotient map. 
    The pair $(S/G,{\rm Branch}(p))$ is a klt Calabi--Yau pair, and by Galois correspondence, $\pi_1^{\rm orb}(S/G,{\rm Branch}(p))$ is isomorphic to $G$. As wished, $|G|=3480$.
\end{example}

The article~\cite{CKO03} discusses the fundamental group of the smooth locus of a normal Calabi-Yau surface with canonical singularities. Inspired by a question from F. Catanese, we describe some finite abstract groups that can appear as such fundamental groups.

\begin{example}\label{ex:mukai}
{\em
    Let $G$ be a subgroup of one of the groups in the list by Mukai \cite{Mukai88}, acting faithfully and symplectically on a smooth algebraic K3 surface $S$.
    Consider $p:S \to  \Sigma := S/G$. Since $G$ acts symplectically, any element of $G$ fixes finitely many points, in particular the finite morphism $p$ is quasi--\'etale. The surface $\Sigma$ is thus a normal Gorenstein Calabi--Yau surface, with quotient, hence klt, singularities. Hence, $\Sigma$ has in fact canonical singularities.
    By the Galois correspondence (Lemma \ref{lem:galoiscorr}) and since $S$ is simply connected, we have $\pi_1(\Sigma_{\rm reg})=G$.
}
\end{example}

The following classical example of the group $\mathfrak{A}_6$ lies in Mukai's list. It allows us to realize $\mathfrak{A}_6$ as the fundamental group of the smooth locus of a normal Calabi--Yau surface with canonical singularities.

\begin{example}
 Consider the $K3$ surface $S\subset\mathbb{P}^5:=\{\sum x_i=\sum x_i^2=\sum x_i^3=0\}$. The group $\mathfrak{S}_6$ acts on $\mathbb{P}^5$ by permuting the coordinates, and this action restricts to $S$. Every transposition is anti-symplectic, and fixes a locus of codimension $1$ in $S$. This yields a symplectic action of $\mathfrak{A}_6$, to which Example \ref{ex:mukai} applies.

Note that the finite Galois cover $p:S\to S/\mathfrak{S}_6$ ramifies in codimension $1$. It is compatible with the klt Calabi--Yau pairs $(S,0)$ and $(S/\mathfrak{S}_6,{\rm Branch}(p))$, so by Galois correspondence (Lemma \ref{lem:galoiscorr}), we have $\pi_1^{\rm orb}(S/\mathfrak{S}_6,{\rm Branch}(p))=\mathfrak{S}_6$.
\end{example}

The following example shows that there exists log Calabi--Yau surface $(X,\Delta)$ for which $\pi_1^{\rm orb}(X,\Delta)$ is finite and admits no nilpotent subgroup of index smaller than $7200$.

\begin{example}\label{ex:quotient-p1xp1}
We first construct a log Calabi--Yau surface $(X,\Delta)$ with fundamental group $\pi_1^{\rm orb}(X,\Delta)=(\mathfrak{A}_5\times \mathfrak{A}_5)\rtimes \mathbb{Z}/2\mathbb{Z}$. Then we show that $(\mathfrak{A}_5\times \mathfrak{A}_5)\rtimes \mathbb{Z}/2\mathbb{Z}$ has no metabelian subgroups except $\{1\}$. Note that the order of $(\mathfrak{A}_5\times \mathfrak{A}_5)\rtimes \mathbb{Z}/2\mathbb{Z}$ is $7200$ hence it has no metabelian subgroups of index less than $7200$.

Recall that there is a faithful projective representation of $\mathfrak{A}_5$ into $\mathrm{PGL}(2,\mathbb{C})$, yielding an action $\mathbb{P}^1$, and thus an action of $(\mathfrak{A}_5\times \mathfrak{A}_5)\rtimes \mathbb{Z}/2\mathbb{Z}$ on $\mathbb{P}^1\times \mathbb{P}^1$. Now the finite subgroup of $\mathrm{Aut}(\mathbb{P}^1\times \mathbb{P}^1)$  generated by $\mathfrak{A}_5\times \mathfrak{A}_5$ and the involution $\tau$ is $(\mathfrak{A}_5\times \mathfrak{A}_5)\rtimes \mathbb{Z}/2\mathbb{Z}$ with the $\mathbb{Z}/2\mathbb{Z}$ acting on $\mathfrak{A}_5\times \mathfrak{A}_5$ by $\tau.(g_1,g_2)=(g_2,g_1)$. 

Set $Y=\mathbb{P}^1\times \mathbb{P}^1$ and $X$ the quotient of $Y$ by $(\mathfrak{A}_5\times \mathfrak{A}_5)\rtimes \mathbb{Z}/2\mathbb{Z}$. We have 
\begin{center}
    $\pi:(Y,0)\rightarrow (X,\Delta)$,
\end{center}
with $\pi^{\ast}(K_X+\Delta)\sim K_Y$ where $\Delta$ is the branch divisor of $\pi$. In particular $(X,\Delta)$ is a log Fano surface. By Galois correspondence, 
we have
\begin{equation*}
    1\rightarrow \pi_1^{\rm orb}(Y,0)\rightarrow \pi_1^{\rm orb}(X,\Delta)\rightarrow (\mathfrak{A}_5\times \mathfrak{A}_5)\rtimes \mathbb{Z}/2\mathbb{Z}\rightarrow 1.
\end{equation*}
As $\pi_1(Y)=1$, we have thus 
\begin{equation*}
    \pi_1^{\rm orb}(X,\Delta)= (\mathfrak{A}_5\times \mathfrak{A}_5)\rtimes \mathbb{Z}/2\mathbb{Z}.
\end{equation*}
Now take one general curve $C_1 \cong \mathbb{P}^1\times \{\mathrm{pt}\}$ and one general curve $C_2\cong \{\mathrm{pt}\} \times \mathbb{P}^1$. In particular the generic points of these curves lie outside the ramification locus of $\pi$. Set $D':=\frac{1}{7200}( \pi_{\ast}C_1+\pi_{\ast}C_2)$ then $\pi^{\ast}(D')=C_1+C_2$ and $D'$ has no common components with $\Delta$. The pair $(X,\Delta+D')$ is then log Calabi--Yau and $(\Delta+D')^{\mathrm{st}}=\Delta$. Thus $\pi_1^{\rm orb}(X,\Delta+D')= (\mathfrak{A}_5\times \mathfrak{A}_5)\rtimes \mathbb{Z}/2\mathbb{Z}$

We now show that $G:=\pi_1^{\rm orb}(X,\Delta)$ has no metabelian normal subgroups other than \{1\}. Consider the short exact sequence
\begin{center}
    $1\rightarrow \mathfrak{A}_5\times \mathfrak{A}_5\rightarrow G\rightarrow \mathbb{Z}/2\mathbb{Z}\rightarrow 1$.
\end{center}
First $G$ is not metabelian. Indeed, its commutator subgroup $[G,G]$ is mapped to $1$ in $\mathbb{Z}/2\mathbb{Z}$ hence $[G,G]=\mathfrak{A}_5\times \mathfrak{A}_5$ which is not abelian. Now suppose that $H$ is a proper normal subgroup of $G$ then $H\leqslant \mathfrak{A}_5\times \mathfrak{A}_5$. In particular, the subgroup $H$ can only be $\mathfrak{A}_5\times \mathfrak{A}_5$, $\mathfrak{A}_5\times \{1\}$, $\{1\}\times \mathfrak{A}_5$ or $\{1\}$. Except for the trivial case, each of the first three cases has non-abelian commutator subgroup, hence is not metabelian.
\end{example}

We conclude with two questions of interest.

\begin{question}
Which are the finite groups that act on log Calabi--Yau surfaces?
\end{question}

The finite subgroups of the plane Cremona are precisely the finite groups that act on some smooth rational surface.
These groups are classified
by the work of Dolgachev and Iskovskikh~\cite{DI09}.
It would be interesting to have a complete classification of all 
finite groups that act on a surface of log Calabi--Yau type.
Every such subgroup is the homomorphic image of the fundamental group of a log Calabi--Yau surface.
Thus, Theorem~\ref{introthm:fun-group-lcy} will serve as a tool to give a classification of these finite groups.
Note that every finite group acts on some canonically polarized surface. Hence, this classification is only interesting for finite actions on surfaces with non-negative curvature.
Note that an answer to the previous question may shed light on group actions on log Calabi--Yau surfaces, whenever such a group has interesting finite subgroups.

\begin{question}\label{quest:infinite}
Let $(X,\Delta)$ be a $n$-dimensional log Calabi--Yau pair with $\pi_1^{\rm orb}(X,\Delta)$ being infinite. How does this reflect on the geometry pair $(X,\Delta)$? 
\end{question}

Theorem~\ref{introthm:fun-group-inf} gives an answer to this in dimension $2$. Up to a finite cover and a birational transformation, we either have a $\mathbb{G}_m$-action or a fibration onto an elliptic curve.
In higher dimensions, it is natural to expect a similar behavior. 
For instance, in the context of Question~\ref{quest:infinite}, it is natural to expect that up to a finite cover and a birational transformation
$X$ admits a fibration to an abelian variety or a variety with a conic bundle structure.
However, in higher dimensions, very little is known about the orbifold fundamental group of log Calabi--Yau pairs.
Hence, it may be more reasonable to study the algebraic fundamental group $\pi_1^{\rm alg}(X,\Delta)$ first.

\bibliographystyle{plain}
\bibliography{mybib}

\newpage

\appendix

\section{Basic properties of the orbifold fundamental group}\label{appA}

The results of this appendix should be well-known to experts, as they feature basic properties that are standard in the study of any type of fundamental group: We see how the right type of morphisms between pairs induces pushforward maps between orbifold fundamental groups; We show that this notion of pushforward is functorial; We provide a Galois correspondence for orbifold fundamental groups. 

For the sake of a complete and self-contained reading experience, we provide here statements and proofs of these basic, useful facts.

\subsection{The loop around a divisor}

\begin{definition}
 Let $X$ be a normal quasi-projective variety, and $D$ be a non-trivial prime effective Weil divisor on $X$. 
    We say that an analytic open subset $U$ of $X$ is a \emph{trimmed neighborhood} of $D$ in $X$ if there are a Zariski open set $U_{\rm Zar}\subset X$ intersecting $D$, and an (analytic) tubular neighborhood $U_{\rm tub}$ for $D\cap (X_{\rm reg}\setminus D_{\rm sing})$ in $X_{\rm reg}\setminus D_{\rm sing}$, such that $U=U_{\rm Zar}\cap U_{\rm tub}$. If $U$ is a trimmed neighborhood of $D$, we denote by $U^*$ the topological manifold $U\setminus D\cap U$. It can be thought of as a pointed neighborhood of $D$.
    
    Of course, the notion of trimmed neighborhood for a fixed divisor $D$ is stable by finite intersection and arbitrary union.

    An inclusion of trimmed neighborhoods $V\subseteq U$ induces inclusions $V_{\rm tub}\subseteq U_{\rm tub}$ and $V_{\rm Zar}\subseteq U_{\rm Zar}$, thus a surjection
    $$\pi_1(V^*)\simeq \pi_1({U_{\rm tub}}^*\cap V_{\rm Zar})\twoheadrightarrow \pi_1(U^*),$$
     where we use that there is a homeomorphism between ${V_{\rm tub}}^*$ and ${U_{\rm tub}}^*$ that preserves $V_{\rm Zar}$ (since its complement has finitely many components, it suffices to preserve them).
    
\end{definition}

\begin{definition}
Let $X$ be a normal quasi-projective variety, and $D$ be a non-trivial prime effective Weil divisor on $X$.
We define the loop around $D$ as the data, for any trimmed neighborhood $U$ of $D$, of the class $\gamma_{D}|_U$ of the positive oriented loop in the fiber of the normal circle bundle of $D\cap U$, in the fundamental group $\pi_1(U^*)$. Since the aforementioned surjection $\pi_1(V^*)\twoheadrightarrow\pi_1(U^*)$ induced by an inclusion $V\subset U$ sends $\gamma_D|_{V}$ to $\gamma_D|_{U}$, we might omit to precise the trimmed neighborhood used and simply write $\gamma_D$ whenever possible.
\end{definition}

\begin{definition}\label{def-canpb}
 Let $X$ and $X'$ be normal projective varieties, together with a proper map $p:X'\to X$. Assume that $p$ is flat in codimension $0$ and $1$ in the source. By \cite[~21.10.1]{EGA-IV}, there is a well-defined pullback map $p^*:{\rm Cl}({\rm Im}(p))\to {\rm Cl}(X')$ at the level of Weil divisors.
    
    Let $D'$ be a non-trivial prime effective Weil divisor on $X'$. We say that $p$ {\em ramifies} at $D'$ with degree $m$ if $D:=p(D')$ has codimension one in ${\rm Im}(p)$, and $D'$ appear as a component of $p^*D$ with coefficient $m$.
    
    We define the {\em ramification divisor} of $p$ as 
    $${\rm Ram}(p):=\sum_{(m,D)\in R} (m-1)D,$$
    where $R$ is the set of pairs $(m,D)$ such that $m\ge 2$ is an integer and $p$ ramifies at $D$ with degree $m$. Note that the index set $R$ is finite.
\end{definition}

The following result is an elementary consequence of a computation in local coordinates, and explains how ramification affects fundamental groups.

\begin{lemma}
    Let $X$ and $Y$ be normal projective varieties, together with a proper map $p:X\to Y$ of image $Y_0$ in $Y$, that is flat in codimension 0 and 1 on the source. Let $D$ be an irreducible effective Weil divisor in $X$, along which $p$ ramifies with degree $m$. Let $D_0 = p_0(D)$ in $Y_0$. 
    Then we have $${p_0}_*(\gamma_D)={\gamma_{D_0}}^m.$$
\end{lemma}

\begin{proof}
    In local coordinates, we want to pushforward the positively oriented loop $\gamma$ generating $\pi_1(\mathbb{C}^*)$ by the map $f:z\in\mathbb{C}^* \mapsto z^m\in\mathbb{C}^*$. Clearly, $f_*(\gamma)=\gamma^m$, as wished.
\end{proof}

\subsection{A pushforward map for orbifold fundamental groups}

In this subsection, we construct a functorial pushforward map for orbifold fundamental groups of pairs.

\begin{definition}\label{def:compatible_annex}    
Let $p:X'\to X$ be a proper surjective map of normal projective varieties that can pullback. Let $\Delta'$ and $\Delta$ be effective $\mathbb{Q}$-divisors on $X'$ and $X$, whose components appear with coefficients at most one each.
We say that $p$ is \emph{compatible} with the pairs $(X',\Delta')$ and $(X,\Delta)$ if
    $$p^*\Delta={\rm Ram}(p)+\Delta',$$
    and the following conditions are satisfied:
    \begin{enumerate}
        \item the divisor $\Delta'$ is an effective $\mathbb{Q}$-divisor on $X'$ whose irreducible components appear with coefficient at most one each,
        \item for any component $D'$ appearing in $\Delta'$ with coefficient $a>0$ and in ${\rm Ram}(p)$ with coefficient $m-1>0$, there is a component $D$ appearing in $\Delta$ with coefficient $b>0$ such that $D'$ appears in $p^*D$ and we have:
        $$m(1-b^{\rm st})=1-a^{\rm st}.$$
    \end{enumerate}

\noindent 
For simplicity, we may say that 
$p\colon (X',\Delta')\rightarrow (X,\Delta)$ is compatible. 
\end{definition}

\begin{remark}
    One may note that for a compatible morphism $p:(X',\Delta')\to (X,\Delta)$, it holds $$p^{\ast}(\Delta^{\rm st})={\rm Ram}(p)+(\Delta')^{\rm st}.$$
\end{remark}

\begin{remark}
   If $p:X'\to X$ is a finite quasi-\'etale cover, and $\Delta$ is an effective $\mathbb{Q}$-divisor on $X$ whose components appear with coefficient at most one each, then setting $\Delta':=p^*\Delta$, and we note that the map $p$ is compatible with $(X',\Delta')$ and $(X,\Delta)$.
\end{remark}

An example of a compatible map that is not a finite cover, but a Mori fiber space onto a curve, is given in Remark \ref{rem:mfscompatible}.

\begin{construction}\label{con:pushforward}  
 Let $(X',\Delta')$ and $(X,\Delta)$ be pairs, and $p:(X',\Delta') \to (X,\Delta)$ be a compatible map.
We construct a pushforward group homomorphism:
$$p_{\bullet}:\pi_1^{\rm orb}(X',\Delta')\to\pi_1^{\rm orb}(X,\Delta).$$
The construction goes as follows.
Let $B:=X_{\rm sing}\cup {\rm Supp}(\Delta^{\rm st})$.
We can restrict and corestrict $p$ to obtain a proper surjective map
$$\overline{p}:X'\setminus p^{-1}(B)
    \to X\setminus B.$$

We claim that $p^{-1}(B)$ is contained in $X'_{\rm sing}\cup{\rm Supp}({\Delta'}^{\rm st} + {\rm Ram}(p))\cup Z$, for some Zariski closed subset $Z$ in $X'_{\rm reg}$ of codimension at least two. Indeed, it is clear that $p^{-1}({\rm Supp}\,\Delta^{\rm st})={\rm Supp}\,{\Delta'}^{\rm st} + {\rm Ram}(p)$. If $X$ is a smooth curve, that is enough. Otherwise, $p:X'\to X$ is a finite cover, and since $X$ is normal, $p^{-1}(X_{\rm sing})$ has codimension at least two in $X'$.

Therefore, we have an open immersion 
    $$i:X'_{\rm reg}\setminus {\rm Supp}({\Delta'}^{\rm st}+{\rm Ram}(p))\cup Z \hookrightarrow X'\setminus p^{-1}(B)$$
    and we define $p_0:=\overline{p}\circ i.$
Since $Z$ is a Zariski closed subset of codimension at least two in $X'_{\rm reg}$, it does not affect fundamental groups. So we obtain the following diagram
    $$\xymatrix{
    \pi_1(X'_{\rm reg}\setminus {\rm Supp}({\Delta'}^{\rm st}+{\rm Ram}(p))) \ar@{->>}[d]_{s'} \ar[r]^-{{p_0}_*} 
    & \pi_1(X_{\rm reg}\setminus {\rm Supp}\left(\Delta^{\rm st}\right)) \ar@{->>}[d]^s
    \\
    \pi_1(X'_{\rm reg}\setminus {\rm Supp}({\Delta'}^{\rm st})) 
    \ar@{->>}[d]_{t'}
    \ar@{.>}[r]_{p_{\rm int}} 
    & \pi_1^{\rm orb}(X,\Delta)
    \\
    \pi_1^{\rm orb}(X',\Delta') 
    \ar@{.>}[ur]_{p_{\bullet}}
    & 
    }
    $$

Here, we claim that there exists a unique group homomorphism $p_{\bullet}$ that makes this diagram commute. Provided it exists, its unicity is clear from the fact that $s'$ and $t'$ are surjective.

First, we prove that $s\circ {p_0}_*$ factors through $s'$. Applying Van Kampen's theorem to add trimmed neighborhoods of components one by one, we see that
$$\ker(s')=\langle \gamma_R\mid R\mbox{ component of }{\rm Supp}\,{\rm Ram}(p)\setminus{\rm Supp}\,{\Delta'}^{\rm st}\rangle_n.$$
Fixing such a component $R$ of ramification order $m\ge 2$, we see by Definition \ref{def:compatible_annex} that $D:=p(R)$ appears in $\Delta^{\rm st}$ with coefficient $1-\frac{1}{m}$. So we have 
$s({p_0}_*\gamma_R)=s({\gamma_D}^m)=1.$
That proves the existence of a group homomorphism
$p_{\rm int}:\pi_1(X'_{\rm reg}\setminus {\rm Supp}({\Delta'}^{\rm st}))\to \pi_1^{\rm orb}(X,\Delta)$
making the diagram commutative.

\medskip

Second, we prove that $p_{\rm int}$ factors through $t'$. Recall that
$$\ker(t') = \left\langle {\gamma_{D'}}^k\mid D'\mbox{ component of }\,{\Delta'}^{\rm st}\mbox{ with coefficient }1-\frac{1}{k}\right\rangle_n.$$
Fixing such a pair $(D',k)$ and denoting by $m\ge 1$ the ramification order of $p$ along $D'$, we see by Definition \ref{def:compatible_annex} that $D:=p(D')$ appears in $\Delta^{\rm st}$ with coefficient $1-\frac{1}{km}$. So we have
$$p_{\rm int}({\gamma_{D'}}^k)=s({p_0}_*{\gamma_{D'}}^k)=s({\gamma_D}^{km})=1.$$
This shows that $p_{\rm int}$ factors through $t'$, hence the existence of the group homomorphism $p_{\bullet}$ as wished.
\end{construction}

We show that our construction is, in some sense, functorial.

\begin{proposition}\label{prop:can_compose}
Let $(X'',\Delta'')$, $(X',\Delta')$, and $(X,\Delta)$ be pairs. Let $q:(X'',\Delta'') \to (X',\Delta')$ and $p:(X',\Delta') \to (X,\Delta)$ be compatible maps, and assume that $p$ is a finite cover. Then the composition $p\circ q:(X'',\Delta'')\to (X,\Delta)$ is a compatible map.
\end{proposition}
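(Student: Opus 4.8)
The plan is to verify directly the defining properties of a compatible map for the composition $p\circ q$: that it can pullback, that $(p\circ q)^*\Delta={\rm Ram}(p\circ q)+\Delta''$ with boundary exactly $\Delta''$, and that the numerical condition (2) of Definition~\ref{def:compatible} holds. The first two are essentially formal once one establishes an additivity law for ramification under composition; the numerical condition is where the real bookkeeping lies.

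First I would check that $p\circ q$ can pullback. Since $q$ is surjective and can pullback, either $q$ is finite or $X'$ is a smooth curve. If $q$ is finite, then $p\circ q$ is a composition of finite maps, hence finite. If $X'$ is a smooth curve, then since $p$ is a finite cover onto the normal variety $X$, the image $X={\rm Im}(p\circ q)$ is a normal curve, hence a smooth curve. In either case $p\circ q$ can pullback, and moreover the pullback on class groups is functorial, $(p\circ q)^*=q^*\circ p^*$; this is standard for finite maps between normal varieties and for pullback of Cartier divisors along maps to smooth curves.

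The core computation is the additivity of ramification under composition. For a prime divisor $D''$ on $X''$, write $m_q$ for the ramification degree of $q$ along $D''$, set $D':=q(D'')$, and write $m_p$ for the ramification degree of $p$ along $D'$. From $p^*D=m_pD'+\cdots$ and $q^*D'=m_qD''+\cdots$ one reads off that the ramification degree of $p\circ q$ along $D''$ is $m_pm_q$. A component-by-component comparison then yields the key identity
$${\rm Ram}(p\circ q)=q^*{\rm Ram}(p)+{\rm Ram}(q),$$
since $(m_p-1)m_q+(m_q-1)=m_pm_q-1$ and the supports match in all cases ($m_p=1$, $m_q=1$, or both $\geq 2$). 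Combining this with the compatibility of $q$ and $p$, I would compute
$$(p\circ q)^*\Delta=q^*\bigl({\rm Ram}(p)+\Delta'\bigr)=q^*{\rm Ram}(p)+{\rm Ram}(q)+\Delta''={\rm Ram}(p\circ q)+\Delta'',$$
which is the required divisor identity, with boundary $\Delta''$. Condition (1) of Definition~\ref{def:compatible} is then inherited verbatim from the compatibility of $q$.

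The main obstacle, and the only genuinely delicate point, is condition (2), which requires careful tracking of the ramification degrees across the two maps. I would first record a uniform reformulation: for any component $D'$ of $\Delta'$ mapping to a component $D$ of $\Delta$ with coefficients $a'$ and $b$, one has $m_p(1-b^{\rm st})=1-(a')^{\rm st}$, where $m_p$ is the ramification degree of $p$ along $D'$; for $m_p\geq 2$ this is exactly condition (2) for $p$, while for $m_p=1$ it follows directly from $p^*\Delta={\rm Ram}(p)+\Delta'$, which forces $a'=b$. The analogous statement holds for $q$. Given a component $D''$ of $\Delta''$ appearing in ${\rm Ram}(p\circ q)$, so that $m_pm_q\geq 2$, I would chain these two relations: writing $D'=q(D'')$ and $D=p(D')$, I obtain
$$1-(a'')^{\rm st}=m_q\bigl(1-(a')^{\rm st}\bigr)=m_qm_p\bigl(1-b^{\rm st}\bigr),$$
so that $D$ is a component of $\Delta$ of positive coefficient $b$, with $D''$ appearing in $(p\circ q)^*D=q^*p^*D$, satisfying the required relation with total ramification $m_pm_q$. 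This verifies condition (2) and completes the proof. I expect the unified treatment of ramified and unramified components to be the point demanding the most care, since the definition states condition (2) only for components of positive ramification.
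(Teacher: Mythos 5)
Your proof is correct and follows essentially the same route as the paper's: both reduce the proposition to the identity ${\rm Ram}(p\circ q)=q^*{\rm Ram}(p)+{\rm Ram}(q)$ and verify it component by component via multiplicativity of ramification degrees, $m_pm_q-1=m_q(m_p-1)+(m_q-1)$. The paper compresses the remaining verifications (the divisor identity, the pullback property, and condition (2) of compatibility) into ``Clearly, it suffices to show\dots'', whereas you spell them out --- in particular the chaining $1-(a'')^{\rm st}=m_q\bigl(1-(a')^{\rm st}\bigr)=m_qm_p\bigl(1-b^{\rm st}\bigr)$ --- which is a faithful filling-in of the same argument rather than a different one.
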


\begin{proof}[Proof of Proposition \ref{prop:can_compose}]
Clearly, it suffices to show that
 $${\rm Ram}(p\circ q)=q^*{\rm Ram}(p)+{\rm Ram}(q).$$
Fix an irreducible effective divisor $D''$ in $X''$ and see whether it appears with the same coefficient on the left handside and on the right handside. 
If $q(D'')=X'$, then $D''$ does not appear in ${\rm Ram}(p\circ q)$ or in ${\rm Ram}(q)$. It does not appear in $q^*{\rm Ram}(p)$ either, since $q(q^*{\rm Ram}(p))\neq X'$. So $D''$ appears neither on the left, nor on the right handside.

Otherwise, $D':=q(D'')$ is an irreducible effective Weil divisor in $X'$. Since $p$ is a finite cover, $D:=p(D')$ is an irreducible effective divisor in $X$.
Let $m',m\ge 1$ be the multiplicities of $D''$ and $D'$ in $q^*D'$ and $p^*D$, respectively. The multiplicity of $D''$ in $q^*p^*D$ is then exactly $mn$, and of course $mn-1=m(n-1)+(m-1)$ as wished.
\end{proof}

\begin{proposition}\label{prop:functorialpf}
Let $(X'',\Delta'')$, $(X',\Delta')$ and $(X,\Delta)$ be pairs. Let $q:(X'',\Delta'') \to (X',\Delta')$ and $p:(X',\Delta') \to (X,\Delta)$ be compatible maps, and assume that $p\circ q$ is also compatible with the corresponding pairs. Then we have
$(p\circ q)_{\bullet}=p_{\bullet}\circ q_{\bullet}.$
\end{proposition}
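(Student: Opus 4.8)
The plan is to use that, by Construction~\ref{con:pushforward}, each of the three pushforwards is the \emph{unique} map fitting into a commutative diagram whose vertical arrows are surjections from honest topological fundamental groups; functoriality of the topological pushforward is then automatic, and the whole content of the proposition reduces to a bookkeeping of the open sets involved, controlled by the identity ${\rm Ram}(p\circ q)=q^{\ast}{\rm Ram}(p)+{\rm Ram}(q)$ established in the proof of Proposition~\ref{prop:can_compose}.

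Concretely, for a compatible map I would write $\sigma_{(Y,\Theta)}\colon\pi_1(Y_{\rm reg}\setminus\operatorname{Supp}(\Theta^{\rm st}))\twoheadrightarrow\pi_1^{\rm orb}(Y,\Theta)$ for the defining quotient, and $\rho_p,\rho_q,\rho_{pq}$ for the open--inclusion surjections onto $\pi_1(\,\cdot_{\rm reg}\setminus\operatorname{Supp}(\,\cdot^{\rm st}))$ obtained by further deleting the relevant ramification locus. Tracing Construction~\ref{con:pushforward} gives the relations
\[
p_{\bullet}\circ\sigma_{(X',\Delta')}\circ\rho_p=\sigma_{(X,\Delta)}\circ(p_0)_{\ast},\qquad
q_{\bullet}\circ\sigma_{(X'',\Delta'')}\circ\rho_q=\sigma_{(X',\Delta')}\circ(q_0)_{\ast},
\]
together with the analogous relation $(p\circ q)_{\bullet}\circ\sigma_{(X'',\Delta'')}\circ\rho_{pq}=\sigma_{(X,\Delta)}\circ\bigl((p\circ q)_0\bigr)_{\ast}$. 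After shrinking to a common dense open subset whose complement has codimension at least two (which changes none of these fundamental groups), I set $V:=X''_{\rm reg}\setminus\operatorname{Supp}({\Delta''}^{\rm st}+{\rm Ram}(p\circ q))$, the domain of $(p\circ q)_0$.

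The two geometric inputs I would isolate are consequences of the ramification identity. First, since $\operatorname{Supp}({\Delta''}^{\rm st}+{\rm Ram}(p\circ q))\supseteq\operatorname{Supp}({\Delta''}^{\rm st}+{\rm Ram}(q))$, the set $V$ is an open subset of the domain of $q_0$, so $\rho_{pq}=\rho_q\circ j_{\ast}$ for the inclusion--induced surjection $j_{\ast}\colon\pi_1(V)\twoheadrightarrow\pi_1(X''_{\rm reg}\setminus\operatorname{Supp}({\Delta''}^{\rm st}+{\rm Ram}(q)))$. Second, a point of $V$ avoids $\operatorname{Supp}(q^{\ast}{\rm Ram}(p))$, so its $q_0$--image avoids $\operatorname{Supp}({\rm Ram}(p))$, and, by the support computation inside Construction~\ref{con:pushforward}, also $\operatorname{Supp}({\Delta'}^{\rm st})$; hence $q_0$ restricts to a map $\tilde q\colon V\to X'_{\rm reg}\setminus\operatorname{Supp}({\Delta'}^{\rm st}+{\rm Ram}(p))$ into the domain of $p_0$, and one has $(q_0)_{\ast}\circ j_{\ast}=\rho_p\circ\tilde q_{\ast}$, while $p_0\circ\tilde q=(p\circ q)_0$ as maps out of $V$.

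Chaining the three relations and using functoriality of the topological pushforward, $\bigl((p\circ q)_0\bigr)_{\ast}=(p_0)_{\ast}\circ\tilde q_{\ast}$, I would compute
\[
p_{\bullet}\circ q_{\bullet}\circ\sigma_{(X'',\Delta'')}\circ\rho_{pq}
=\sigma_{(X,\Delta)}\circ(p_0)_{\ast}\circ\tilde q_{\ast}
=\sigma_{(X,\Delta)}\circ\bigl((p\circ q)_0\bigr)_{\ast}
=(p\circ q)_{\bullet}\circ\sigma_{(X'',\Delta'')}\circ\rho_{pq}.
\]
As $\sigma_{(X'',\Delta'')}\circ\rho_{pq}$ is surjective onto $\pi_1^{\rm orb}(X'',\Delta'')$, it cancels on the right, giving $(p\circ q)_{\bullet}=p_{\bullet}\circ q_{\bullet}$. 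The only real obstacle is the organizational one of keeping the half--dozen ambient open sets and their inclusion maps consistent; the two inclusions $V\hookrightarrow\operatorname{dom}(q_0)$ and $q_0(V)\subseteq\operatorname{dom}(p_0)$ are precisely where Proposition~\ref{prop:can_compose} and the codimension bookkeeping of Construction~\ref{con:pushforward} do the essential work, and everything else is either functoriality of $\pi_1$ for continuous maps or the defining commutativity of the pushforward squares.
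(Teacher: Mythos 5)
Your proof is correct and takes essentially the same route as the paper: your restricted map $\tilde q\colon V\to X'_{\rm reg}\setminus{\rm Supp}({\Delta'}^{\rm st}+{\rm Ram}(p))$ is precisely the paper's map $q_{0,p}$, the identity $\bigl((p\circ q)_0\bigr)_{\ast}=(p_0)_{\ast}\circ\tilde q_{\ast}$ is the paper's key observation, and your cancellation of the surjection $\sigma_{(X'',\Delta'')}\circ\rho_{pq}$ is the paper's ``factoring through the natural surjections'' step. One cosmetic remark: the identity ${\rm Ram}(p\circ q)=q^{\ast}{\rm Ram}(p)+{\rm Ram}(q)$ should not be cited from Proposition~\ref{prop:can_compose} (which assumes $p$ finite), but it holds here anyway, since expanding $(p\circ q)^{\ast}\Delta=q^{\ast}p^{\ast}\Delta$ using the three assumed compatibilities and cancelling $\Delta''$ gives it in one line.
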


\begin{proof}[Proof of Proposition \ref{prop:functorialpf}]
Note that the compatible map $q$ can be restricted and corestricted into a map
$$q_{0,p}:X''_{\rm reg}\setminus {\rm Supp}(\Delta''^{\rm st} + {\rm Ram}(p\circ q))\cup Z' \to X'_{\rm reg}\setminus {\rm Supp}(\Delta'^{\rm st} + {\rm Ram}(p))$$
where $Z'$ is a Zariski closed subset of $X''_{\rm reg}$ of codimension at least 2.
Clearly, the two group homomorphisms ${(p\circ q)_0}_*$ and ${p_0}_*\circ {q_{0,p}}_*$ then coincide. Factoring through the natural surjections of fundamental groups induced by the relevant open immersions, we see that ${q_{0,p}}_*$ and ${q_0}_*$ both induce the same unique group homomorphism $q_{\bullet}$ as in Construction \ref{con:pushforward}. The fact that $(p\circ q)_{\bullet}=p_{\bullet}\circ q_{\bullet}$ then follows from the fact that ${(p\circ q)_0}_* = {p_0}_*\circ {q_{0,p}}_*$.
\end{proof}

\subsection{A Galois correspondence for orbifold fundamental groups}\label{subsec:galois}
In this subsection, we prove a Galois correspondence for orbifold fundamental groups.
Recall that a finite cover $p:X'\to X$ is called {\it Galois} if there is a finite subgroup ${\rm Gal}(p)$ of ${\rm Aut}(X')$ such that $p$ is isomorphic to the natural quotient map of $X'\to X'/{\rm Gal}(p)$. 
Recall also that a finite cover is called {\it cyclic} if it is a Galois cover with a cyclic Galois group.

\begin{proposition}\label{prop:pushforward}
    Let $(X',\Delta')$ and $(X,\Delta)$ be pairs and $p:(X',\Delta') \to (X,\Delta)$ be a compatible map. 
    If $p$ is a finite Galois cover, then $p_{\bullet}$ is injective and $p_{\bullet}\pi_1^{\rm orb}(X',\Delta')$ is a normal subgroup of finite index in $\pi_1^{\rm orb}(X,\Delta)$, with quotient isomorphic to the Galois group ${\rm Gal}(p)$.  
\end{proposition}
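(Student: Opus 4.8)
The plan is to leverage the pushforward construction from Construction \ref{con:pushforward} together with the Galois action and the classical Galois theory of covering spaces. First I would set up the geometric picture on the level of topological spaces where fundamental groups are honestly computed. Let $B:=X_{\rm sing}\cup{\rm Supp}(\Delta^{\rm st})$ as in the construction, and work on the open locus $U:=X_{\rm reg}\setminus{\rm Supp}(\Delta^{\rm st})$ downstairs and its preimage upstairs. The key point is that the Galois group $G:={\rm Gal}(p)$ acts on $X'$, hence on the relevant open subsets, and the quotient recovers $X$. Since $p$ is finite and quasi-\'etale away from the ramification and singular loci (and the codimension-two locus $Z$ does not affect fundamental groups), the restricted map $p_0$ realizes a genuine Galois covering of the punctured orbifold loci, with deck group $G$. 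This furnishes a short exact sequence of ordinary fundamental groups before quotienting by the loop relations.

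Next I would show that $p_{\bullet}$ is injective. The cleanest route is to produce a section-like or retraction argument: because $p$ is Galois with group $G$, the composite $\pi_1^{\rm orb}(X',\Delta')\xrightarrow{p_{\bullet}}\pi_1^{\rm orb}(X,\Delta)$ should fit into an exact sequence whose kernel is trivial. Concretely, I would compare the defining presentations: $\pi_1^{\rm orb}(X',\Delta')$ is the quotient of $\pi_1(X'_{\rm reg}\setminus{\rm Supp}({\Delta'}^{\rm st}))$ by the orbifold relations $\langle\gamma_{D'}^{k}\rangle_n$, and similarly downstairs. The map ${p_0}_*$ on the ordinary fundamental groups of the punctured loci is injective with image a finite-index normal subgroup, by the theory of finite covering spaces (the index being $|G|$). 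The content is to verify that this injectivity descends through the two surjections $s',t'$ to the orbifold quotients, i.e.\ that no new relations are created. This is where compatibility (Definition \ref{def:compatible}, condition (2)) does the essential work: it guarantees that the orbifold loop relations upstairs correspond exactly to the orbifold loop relations downstairs under ${p_0}_*$, so that $p_{\bullet}^{-1}$ of the trivial element contains only the image of $\ker(t')$.

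I would then identify the image and the quotient. The image $p_{\bullet}\pi_1^{\rm orb}(X',\Delta')$ is normal because it is the kernel of the natural surjection $\pi_1^{\rm orb}(X,\Delta)\twoheadrightarrow G$ induced by the Galois action: a loop in $U$ lifts to a loop in the cover precisely when its monodromy permutation in $G$ is trivial. Constructing this surjection $\pi_1^{\rm orb}(X,\Delta)\to G$ requires checking it is well-defined on the orbifold quotient, and here again the orbifold relations map to the identity in $G$ because a loop $\gamma_D^{m}$ around a component $D$ with the appropriate multiplicity lifts to a closed loop (this is exactly the ramification-multiplicity bookkeeping of Definition \ref{def:compatible}). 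The short exact sequence
\[
1\longrightarrow \pi_1^{\rm orb}(X',\Delta')\xrightarrow{\ p_{\bullet}\ }\pi_1^{\rm orb}(X,\Delta)\longrightarrow G\longrightarrow 1
\]
then gives simultaneously normality, finite index equal to $|G|$, and the quotient isomorphic to ${\rm Gal}(p)$.

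The main obstacle I anticipate is the careful topological justification that the orbifold relations upstairs and downstairs match up precisely under the covering map, especially around ramification divisors. One must disentangle three kinds of components of ${\rm Supp}(p^*\Delta^{\rm st})$ — those lying over boundary components with genuine ramification, those in ${\rm Ram}(p)$ alone, and those in ${\Delta'}^{\rm st}$ alone — and check in local analytic coordinates (as in the lemma on ${p_0}_*(\gamma_D)=\gamma_{D_0}^m$) that the loop relations transport correctly. The bookkeeping equation $m(1-b^{\rm st})=1-a^{\rm st}$ from compatibility is exactly calibrated to make this work, but verifying that the quotient $\pi_1(U)/(\text{orbifold relations})$ admits $G$ as deck group in the orbifold sense — rather than merely for the ordinary punctured cover — is the delicate step, and I expect it to require a Van Kampen argument reintroducing the trimmed neighborhoods of all boundary components equivariantly with respect to $G$.
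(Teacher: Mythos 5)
Your proposal follows essentially the same route as the paper: restrict to the punctured/regular loci where $p_0$ is a genuine finite Galois covering, use the classical Galois correspondence to get the exact sequence of ordinary fundamental groups, descend the monodromy surjection to $\pi_1^{\rm orb}(X,\Delta)\twoheadrightarrow {\rm Gal}(p)$ because the orbifold relations $\gamma_D^{n}$ lift to closed loops (this is where compatibility enters), and obtain injectivity by showing that the orbifold relation subgroups upstairs and downstairs correspond exactly under ${p_0}_*$. The one delicate step you flag --- that the deck group acts ``in the orbifold sense,'' i.e.\ that the pushforward of the normal closure of the upstairs relations is actually normal in the downstairs group --- is resolved in the paper not by an equivariant Van Kampen argument but by the purely group-theoretic observation that the outer action of ${\rm Gal}(p)$ on $\pi_1$ of the cover permutes the conjugacy classes of the boundary loops $\gamma_Z^{k}$ among themselves (since Galois automorphisms permute the components of ${\Delta'}^{\rm st}+{\rm Ram}(p)$ preserving coefficients), which is the precise realization of the equivariance you anticipated.
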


\begin{proof}
By Zariski's purity of the branch locus, the map $\overline{p}$ defined at the beginning of Construction~\ref{con:pushforward} is a finite Galois étale cover. Moreover, $p^{-1}(X_{\rm sing}\cup{\rm Supp}\,\Delta^{\rm st})$ coincides exactly with $X'_{\rm sing}\cup{\rm Supp}({\Delta'}^{\rm st}+{\rm Ram}(p))\cup Z$, where $Z$ is a Zariski closed subset of $X_{\rm reg}$ of codimension at least $2$, so that the pushforward $i_*$ by the open inclusion $i$ is an isomorphism of groups.

Using the usual Galois correspondence for fundamental groups, the pushforward homomorphism $\overline{p}_*$ yields an exact sequence
$$1\to\pi_1(X'\setminus p^{-1}(B))\overset{\overline{p}_*}{\longrightarrow}\pi_1(X\setminus B)\to {\rm Gal}(p)\to 1.$$
Moreover, an easy computation shows that the image of $\overline{p}_*$ contains the kernel of $s$. Hence, by the first isomorphism theorem and since $s$ is surjective,
the image of $s\circ\overline{p}_*$ is a normal subgroup of $\pi_1^{\rm orb}(X,\Delta)$ with corresponding quotient group isomorphic to ${\rm Gal}(p)$.
Since $i_*$ is an isomorphism, the image of $s\circ {p_0}_*$ is exactly the image of $s\circ\overline{p}_*$. Since $s'$ and $t'$ are surjective, this is again the same as the image of $p_{\bullet}$. Hence, the image of $p_{\bullet}$ is a normal subgroup of finite index in $\pi_1^{\rm orb}(X,\Delta)$, with quotient isomorphic to the Galois group ${\rm Gal}(p)$, as wished.

\medskip

From here on, assume that $p(X'_{\rm sing})\subseteq X_{\rm sing}\cup p({\rm Ram}(p))$, and let us show that $p_{\bullet}$ is injective. By the usual Galois correspondence, the group homomorphism $\overline{p}_*$ is injective.  
Since the pushforward map $i_*$ is an isomorphism, ${p_0}_*$ is injective too.

We now want to prove that, with the notation of Construction~\ref{con:pushforward},
$${p_0}_*\ker(t'\circ s')=\ker(s).$$
Consider the subgroups
$$K':=\langle {\gamma_Z}^k\mid Z\mbox{ is a component of }{\Delta'}^{\rm st}+{\rm Ram}(p),\, 1-k^{-1} \mbox{ is the coefficient of }Z\mbox{ in }{\Delta'}^{\rm st}\rangle$$
inside $G':=\pi_1(X'_{\rm reg}\setminus {\rm Supp}(\Delta'^{\rm st}+{\rm Ram}(p)))$, and
$$K:=\langle {\gamma_D}^n\mid D\mbox{ is a component of }\Delta^{\rm st}\mbox{ with coefficient }1-n^{-1}\rangle,$$
inside $G:=\pi_1(X_{\rm reg}\setminus {\rm Supp}\,\Delta^{\rm st}).$ 
Let us fix a base point $x'\in X'_{\rm reg}\setminus {\rm Supp}(\Delta'^{\rm st}+{\rm Ram}(p))$ and let $x=p(x')$ be fixed as our base point in $X_{\rm reg}\setminus {\rm Supp}\,\Delta^{\rm st}$.

We want to show that
${p_0}_*K'_n = K_n$, where $K_n$ and $K'_n$ are smallest normal subgroups containing $K$ and $K'$ respectively. We already proved in Construction \ref{con:pushforward} that ${p_0}_*K'=K$.
Clearly, $({p_0}_*K'_n)_n = ({p_0}_*K')_n = K_n$, so it suffices to show that ${p_0}_*K'_n$ is a normal subgroup of $G$ to conclude.

Note that the action of $G$ by conjugation on ${p_0}_*G'$ induces a group homomorphism from ${\rm Gal}(p)\simeq G/{p_0}_*G'$ to ${\rm Aut}(G')/{\rm Inn}(G')$.  This action can also be more concretely viewed as follows:
Fix for each $h\in{\rm Gal}(p)$ a path $c_h$ from $h(x')$ to $x'$. For every $h\in{\rm Gal}(p)$, we can consider the automorphism of $G'$ sending a loop $\gamma'$ based at $x'$ to another loop $c_h h(\gamma') c_{h}^{-1}$ based at $x'$. Its class $[h]$ in the group ${\rm Aut}(G')/{\rm Inn}(G')$ does not depend on the choice of the path $c_h$, and we obtain in this way our natural group homomorphism ${\rm Gal}(p)\to {\rm Aut}(G')/{\rm Inn}(G')$. In other words, ${\rm Gal}(p)$ acts on the set of $G'$-conjugacy classes in $G'$.

Now, let $h\in{\rm Gal}(p)$ and ${\gamma_Z}^k$ a generator of $K'$ as in the beginning of this proof, based at $x'$. Then it is easy to check that $h$ sends the conjugacy class of ${\gamma_Z}^k$ to the conjugacy class of ${\gamma_{h(Z)}}^k$, still based at $x'$, which is an element of $K'$ as well. In particular, applying $h$ to the normal subgroup ${K'}_n$ of $G'$, we obtain $K'_n$ again. Hence, ${p_0}_*K'_n$ is indeed normal in $G$, as wished.
\end{proof}

\begin{lemma}\label{lem:galoiscorr}
    Let $(X,\Delta)$ be a pair, and let $H$ be a normal subgroup of finite index in $\pi_1^{\rm orb}(X,\Delta)$.
    Then there exists a pair $(X',\Delta')$ and a compatible map $p:(X',\Delta')\to (X,\Delta)$ that is a finite Galois cover, such that 
    $p_{\bullet}\pi_1^{\rm orb}(X',\Delta')$ coincides with $H$ as a subgroup of $\pi_1^{\rm orb}(X,\Delta)$.
\end{lemma}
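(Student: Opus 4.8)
The plan is to reverse the construction of Proposition~\ref{prop:pushforward}: starting from $H$, I would first build a finite Galois \'etale cover of the open set $U:=X_{\rm reg}\setminus\supp(\Delta^{\rm st})$, then compactify it to a finite cover of $X$, and finally equip the total space with a boundary making the map compatible. Write $s\colon\pi_1(U)\twoheadrightarrow\pi_1^{\rm orb}(X,\Delta)$ for the canonical surjection of Construction~\ref{con:pushforward}, and set $\widetilde H:=s^{-1}(H)$. As $H$ is normal of finite index, so is $\widetilde H$, and $\pi_1(U)/\widetilde H\cong \pi_1^{\rm orb}(X,\Delta)/H=:G$. Since $U$ is a smooth connected quasi-projective variety, the Riemann existence theorem attaches to $\widetilde H$ a connected finite \'etale (algebraic) cover $\overline p\colon V\to U$; it is Galois with group $G$, and $\overline p_*\pi_1(V)=\widetilde H$.

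Next I would let $p\colon X'\to X$ be the normalization of $X$ in the function field $\cc(V)$. Then $X'$ is a normal projective variety, $p$ is finite with $p^{-1}(U)=V$, and the $G$-action on $V$ extends to $X'$, so $p$ is a finite Galois cover with ${\rm Gal}(p)=G$; being finite, it can pullback. By Zariski--Nagata purity of the branch locus applied over the smooth locus $X_{\rm reg}$, the divisorial ramification of $p$ occurs only over the components of $\supp(\Delta^{\rm st})$: any ramification divisor has codimension-one image meeting $X_{\rm reg}$, and over the codimension-$\ge 2$ locus $X_{\rm sing}$ no ramification divisor is created. Fix a component $D$ of $\Delta$ with coefficient $b$ and $D\subseteq\supp(\Delta^{\rm st})$, of orbifold index $m_D$ (so $b^{\rm st}=1-\tfrac1{m_D}$). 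The ramification index $d_D$ of $p$ along any component $D'$ over $D$ equals the order of the image of $\gamma_D$ in $G$; since $\gamma_D^{m_D}$ is trivial in $\pi_1^{\rm orb}(X,\Delta)$, its image in $G$ is trivial, hence $d_D\mid m_D$.

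I would then define $\Delta':=p^{*}\Delta-{\rm Ram}(p)$ and verify compatibility in the sense of Definition~\ref{def:compatible}. A local computation at $D'$ gives the coefficient $a:=1-d_D(1-b)$; using $1-b\le \tfrac1{m_D}\le \tfrac1{d_D}$ this lies in $[0,1]$, so $\Delta'$ is an effective $\qq$-divisor with coefficients at most one, while components $D'$ lying over $\supp(\Delta)\setminus\supp(\Delta^{\rm st})$, where $p$ is \'etale, simply carry back their coefficient $b$. For condition~(2) one checks $a^{\rm st}=1-\tfrac{d_D}{m_D}$, whence $d_D\,(1-b^{\rm st})=\tfrac{d_D}{m_D}=1-a^{\rm st}$, matching the required standard coefficients on both sides of $p^{*}\Delta^{\rm st}={\rm Ram}(p)+(\Delta')^{\rm st}$; thus $p\colon(X',\Delta')\to(X,\Delta)$ is compatible.

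Finally I would invoke Proposition~\ref{prop:pushforward}: as $p$ is a finite Galois compatible cover, $p_{\bullet}$ is injective and $p_{\bullet}\pi_1^{\rm orb}(X',\Delta')$ is normal of finite index. To see that this image is exactly $H$, I retrace the identifications in the proof of that proposition: the image of $p_{\bullet}$ equals the image of $s\circ\overline p_*$, and $\overline p_*\pi_1(V)=\widetilde H=s^{-1}(H)$, so by surjectivity of $s$ the image is $s(s^{-1}(H))=H$. I expect the main obstacle to be the compatibility verification---pinning down the ramification index $d_D$ as the order of $\gamma_D$ in $G$, establishing the divisibility $d_D\mid m_D$, and matching standard coefficients---together with the bookkeeping needed to conclude that $p_{\bullet}$ lands on $H$ itself rather than on some other normal subgroup with quotient isomorphic to $G$.
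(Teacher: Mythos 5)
Your proposal is correct and follows essentially the same route as the paper's own proof: pull $H$ back to $\pi_1(X_{\rm reg}\setminus \supp\Delta^{\rm st})$, realize the preimage by a finite Galois cover that extends to a finite cover of all of $X$, set $\Delta':=p^*\Delta-{\rm Ram}(p)$, verify compatibility by showing the ramification index along each branch divisor divides the orbifold index (via the order of the image of $\gamma_D$ in the Galois group), and identify $p_{\bullet}\pi_1^{\rm orb}(X',\Delta')$ with $H$ through Proposition~\ref{prop:pushforward}. The only cosmetic differences are that you extend the cover by normalizing $X$ in $\cc(V)$ after invoking the Riemann existence theorem, where the paper cites the extension theorems of \cite{DG94}, and that you quote the standard inertia-group fact that the paper instead verifies explicitly with a commutative diagram of trimmed neighborhoods.
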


\begin{proof}
    Let $H_0$ be the pre-image of $H$ in $\pi_1(X_{\rm reg}\setminus {\rm Supp}\,\Delta^{\rm st})$. It is again a normal subgroup of finite index, and by the usual Galois correspondence, we have a finite Galois unramified cover in the analytic topology 
    $$p_0:U\to X_{\rm reg}\setminus {\rm Supp}(\Delta^{\rm st})$$ 
    such that ${p_0}_*\pi_1(U)$ coincides with $H_0$ as a normal subgroup of $\pi_1(X_{\rm reg}\setminus {\rm Supp}(\Delta^{\rm st}))$. 
    
    By \cite[Theorems 3.4 and 3.5]{DG94}, there is an algebraic finite cover $p:X'\to X$ extending $p_0$. Since $X$ is normal, $p$ is a finite Galois cover and ${\rm Gal}(p)={\rm Gal}(p_0)$, which identifies with the quotient of $\pi_1^{\rm orb}(X,\Delta)$ by the normal subgroup $H$. 
    We set $\Delta':=p^*\Delta-{\rm Ram}(p)$, and want to show that $(X',\Delta')$ is a pair that makes $p$ a compatible map. 

    For that, it suffices to track the coefficients in $p^*\Delta,p^*\Delta^{\rm st},\Delta',\Delta'^{\rm st},$ and ${\rm Ram}(p)$ of any fixed irreducible effective divisor in $X'$. Fix $Z$ such a divisor.
    If $Z$ is not contained in ${\rm Ram}(p)$, then it appears with the same coefficient in $\Delta'$ as in $p^*\Delta$ and with the same coefficient in their standard approximations, respectively. That is enough.
    Assume now that $Z$ appears in ${\rm Ram}(p)$. Let $m$ be the ramification order of $p$ along $Z.$ Since $p_0$ is \'etale, the irreducible effective divisor $D:=p(Z)$ appears as a component of $\Delta^{\rm st}$ with coefficient $b^{\rm st}$. If $b^{\rm st} = 1$, then $Z$ appears with coefficient $1$ in each of the divisors $p^*\Delta,p^*\Delta^{\rm st},\Delta',\Delta'^{\rm st}$, and that's enough.

    Otherwise, we have $b^{\rm st} = 1-\frac{1}{n}$ for some $n\ge 2$. Let $a$ the (possibly zero) coefficient of $Z$ in $\Delta'$. Then clearly, $a=mb-(m-1)\in [0,1]$. From there on, it suffices to check that $n$ is divisible by $m$ to conclude the proof. Let $\gamma_Z$ be the loop around $Z$, and $\gamma_D$ be the loop around $D$. Note that ${\gamma_D}^n$ belongs to the kernel of the defining surjection $\pi_1(X_{\rm reg}\setminus{\rm Supp}\,\Delta^{\rm st})\twoheadrightarrow \pi_1^{\rm orb}(X,\Delta)$, and in particular to the normal subgroup $H_0$. By definition of the finite Galois cover $p_0$, we can find $\gamma\in\pi_1(U)$ such that ${p_0}_*\gamma = {\gamma_D}^n$.
    Take $U_Z$ and $U_D$ to be trimmed neighborhoods of $Z$ in $X'$ and $D$ in $X$. Noting that the subgroup of ${\rm Gal}(p)$ whose elements fix every single point of $Z$ is cyclic of order $m$ (because $Z$ has codimension one), we have a commutative diagram 
    $$\xymatrix{
    1 \ar[r]
& \pi_1(U_Z\setminus Z) \ar[r] \ar[d]
& \pi_1(U_D\setminus D) \ar[r] \ar[d]
& \mathbb{Z}/m\mathbb{Z} \ar[r] \ar@{^{(}->}[d]
& 1 \\
1 \ar[r]
& \pi_1(U) \ar[r]
& \pi_1(X_{\rm reg}\setminus {\rm Supp}(\Delta^{\rm st})) \ar[r]
& {\rm Gal}(p) \ar[r]
& 1     }$$
Here $\gamma_D$ in $\pi_1(U_D\setminus D)$ is sent to a generator of $\mathbb{Z}/m\mathbb{Z}$. Hence, the order of $\gamma_D$ in ${\rm Gal}(p_0)$ equals $m$. We already showed that the image of ${\gamma_D}^n$ is trivial in the quotient ${\rm Gal}(p_0)$, so $n$ is divisible by $m$, as wished.

To conclude, we note that
$p_{\bullet}\pi_1^{\rm orb}(X',\Delta')$ is the image of ${p_0}_*\pi_1(U)=H_0$ by the defining surjection $\pi_1(X_{\rm reg}\setminus {\rm Supp}\,\Delta^{\rm st})\twoheadrightarrow\pi_1^{\rm orb}(X,\Delta)$, which is exactly the initial normal subgroup $H$. 
\end{proof}

\end{document}